\documentclass[final,leqno]{siamltex704}
\usepackage{amsmath}
\usepackage{graphicx}
 \usepackage[notcite,notref]{showkeys}
\usepackage{mathrsfs}
\usepackage{color}
%\usepackage{graphics}
%% or use the graphicx package for more complicated commands
\usepackage{float}
\usepackage{amsfonts,amssymb}
\usepackage{dsfont}
\usepackage{pifont}
\usepackage{wrapfig} % Allows in-line images
\usepackage{hyperref}
\usepackage{multirow}
\numberwithin{equation}{section}
%% The amssymb package provides various useful mathematical symbols
%\usepackage{amssymb}
%\usepackage{float}
%% The amsthm package provides extended theorem environments
%\usepackage{amsthm}
%\newtheorem{theorem}{Theorem}[section]
%\newtheorem{lemma}[theorem]{Lemma}
%\newtheorem{proposition}[theorem]{Proposition}
%\newtheorem{corollary}[theorem]{Corollary}
%\newtheorem{remark}[theorem]{Remark}
\def\3bar{{|\hspace{-.02in}|\hspace{-.02in}|}}
\def\E{{\mathcal{E}}}
\def\T{{\mathcal{T}}}

\def\pT{{\partial T}}

\def\W{{\mathcal{W}}}

\def\bn{{\mathbf{n}}}

\def\bx{{\mathbf{x}}}

\def\bF{{\textbf{F}}}
\def\ljump{{[\![}}
\def\rjump{{]\!]}}

\def\bbeta{{\boldsymbol{\beta}}}

\newtheorem{algorithm}{Primal-Dual Weak Galerkin Algorithm}[section]

%\journal{}
\setlength{\parindent}{0.25in} \setlength{\parskip}{0.08in}

\title{A primal-dual finite element method for first-order transport problems}

\author{Chunmei Wang \thanks{Department of Mathematics \& Statistics, Texas Tech University, Lubbock, TX 79409, USA (chunmei.wang@ttu.edu). The research of Chunmei Wang was partially supported by National Science Foundation Award DMS-1849483.} \and Junping Wang\thanks{Division of Mathematical
Sciences, National Science Foundation, Alexandria, VA 22314
(jwang@nsf.gov). The research of Junping Wang was supported by the NSF IR/D program, while working at National Science Foundation. However, any opinion, finding, and conclusions or recommendations expressed in this material are those of the author and do not necessarily reflect the views of the National Science Foundation.}
}

\begin{document}

\maketitle
\begin{abstract}
This article devises a new numerical method for first-order transport problems by using the primal-dual weak Galerkin (PD-WG) finite element method recently developed in scientific computing. The PD-WG method is based on a variational formulation of the modeling equation for which the differential operator is applied to the test function so that low regularity for the exact solution of the original equation is sufficient for computation. The PD-WG finite element method indeed yields a symmetric system involving both the original equation for the primal variable and its dual for the dual variable (also known as Lagrangian multiplier). For the linear transport problem, it is shown that the PD-WG method offers numerical solutions that conserve mass locally on each element. Optimal order error estimates in various norms are derived for the numerical solutions arising from the PD-WG method with weak regularity assumptions on the modelling equations.  A variety of numerical results are presented to demonstrate the accuracy and stability of the new method.
\end{abstract}

\begin{keywords}
primal-dual finite element method, weak Galerkin, transport equation, discrete weak gradient, polytopal partitions, weak regularity,  conservative methods.
\end{keywords}

\begin{AMS}
Primary, 65N30, 65N15, 65N12; Secondary, 35L02, 35F15, 35B45
\end{AMS}

\pagestyle{myheadings}

\section{Introduction}
In this paper we are concerned with the development of new numerical methods for first-order linear convection equations in divergence form.
For simplicity, consider the model problem that seeks an unknown function $u$ satisfying
\begin{equation}\label{model}
\begin{split}
\nabla\cdot (\bbeta u)+c u=&f, \qquad \text{in }\Omega,\\
u=&g, \qquad  \text{on }\Gamma_-,
\end{split}
\end{equation}
where $\Omega$ is an open bounded and connected domain in $\mathbb R^d \ (d=2, 3)$ with Lipschitz continuous boundary $\partial \Omega$,   $\Gamma_-$ is the inflow portion of the boundary defined by
$$
\Gamma_- = \{\bx\in \partial\Omega: \ \bbeta(\bx) \cdot \bn < 0\},
$$
where $\bn$ is the unit outward normal vector to the boundary $\partial \Omega$ at a given point. Assume that  the convection vector satisfies $\bbeta=(\beta_1, \cdots, \beta_d)\in [L^{\infty}(\Omega)]^d$ and is locally as smooth as $W^{1,\infty}$, the reaction coefficient $c\in L^{\infty} (\Omega)$, the load function $f\in L^2(\Omega)$, and the inflow boundary data $g\in L^2(\Gamma_-)$.
%We assume that  $\Gamma_-$ and $\Gamma_+$ are well separated such that $meas_{d-1} (\overline{\Gamma_-} \cap \overline {\Gamma_+})=0$ and $\Gamma_- \cup \Gamma_+ =\partial \Omega$.
%Assume there exists a positive constant $c_0$ such that
%$$
%c(x)+\frac{1}{2} \nabla\cdot \bbeta (x)\geq c_0, \forall x \in \overline{\Omega}.
%$$

First-order linear partial differential equations (PDEs) of hyperbolic-type are called transport equations or linear convection equations. Problems of hyperbolic-type arise in many areas of science and engineering, such as fluid dynamics and neutron transport. In the past several decades, there have been  increasing research activities devoted to the development of accurate and efficient numerical methods for hyperbolic problems. Due to the localized phenomena, such as propagating discontinuities and sharp transition layers, it has been a challenging task to develop effective numerical methods for hyperbolic problems. Due to largely the fact that linear hyperbolic PDEs admit discontinuous solutions for non-smooth boundary data, it is difficult to develop numerical methods that provide high-order accurate approximations in regions of smooth solution as well as sharp resolution of discontinuity while avoiding spurious oscillations at discontinuities \cite{bd1999}. Linear hyperbolic equations also form prototype equations for general hyperbolic equations, such as systems of nonlinear conservation laws \cite{l1992} or transport equations in phase space \cite{lm1993}.  It has been shown that successful numerical methods for linear convection equations can be used as  building blocks for the numerical solution of complex hyperbolic PDEs \cite{l1992}. In literature, a series of numerical methods have been developed for linear transport equations, including the streamline-upwind Petrov-Galerkin method \cite{eehj1996}, the residual distribution framework \cite{jnp1984, eehj1996, a2001}, the least-squares finite element methods \cite{cj1988, bc2001, hjs2002, smmo2004, hjs2002, b1999, smmo2004}, the stabilized finite element methods \cite{burman2014}, and various discontinuous Galerkin finite element methods \cite{rh, lr, c1999, cs, hss, j2004, eg, bh, b, be, bs, bs2007}.

Most of the existing study for the linear transport equation \eqref{model} typically assumes certain coercivity condition on the convection vector $\bbeta$ and the reaction coefficient $c$ in the form of $c+\frac12 \nabla\cdot\bbeta \ge \alpha_0$ or alike for some fixed positive number $\alpha_0$. In practical applications, these conditions are very restrictive and often rule out many important physics such as those with compressible flow fields and exothermic reactions \cite{burman2014}. One such exception is the stabilized finite element method developed by Burman in  \cite{burman2014}, in which the convection vector is assumed to satisfy $\bbeta\in [W^{2,\infty}(\Omega)]^d$ and $c\in W^{1,\infty}(\Omega)$.

The objective of this paper is to develop a new numerical method for the linear convection problem \eqref{model} for piecewise smooth convection vector $\bbeta$ and reaction coefficient $c$ without assuming any coercivity conditions on the equation.
Our new numerical schemes will be devised by following the primal-dual weak Galerkin (PD-WG) framework introduced and studied in \cite{ww2016, ww2017, ww2018, wz2019, w2018}. The PD-WG finite element method was originally developed for the second order elliptic PDEs in non-divergence form through
a constraint optimization approach of the problem in which the constraint was given by a straightforward discretization of the PDE and the optimization was imposed to minimize the ``discontinuity" of the approximating functions. The resulting Euler-Lagrange formulation reveals a symmetric problem involving both the primal (original) equation and the dual (adjoint) equation integrated through various stabilizers designed to provide certain ``weak continuity or smoothness". The framework of the primal-dual methods in numerical PDEs was also developed by Burman \cite{Burman2013, burman2014} in other finite element contexts and was named {\em stabilized finite element methods}. The PD-WG finite element methods have shown great promises for PDE problems where no traditional variational formulations are readily available for analysis and discretization.

Let us briefly discuss the philosophy and the key ingredient in the PD-WG finite element method for the first-order hyperbolic problem   \eqref{model}. First of all, the solution of the model problem (\ref{model}) can be characterized by seeking $u \in L^2(\Omega)$ such that \begin{equation}\label{weakform}
(u, \bbeta \cdot \nabla\sigma- c \sigma)=\langle g, \bbeta \cdot \bn \sigma\rangle_{\Gamma_-}-(f,\sigma), \qquad \forall \sigma\in H_{0,\Gamma_+}^1(\Omega),
\end{equation}
where  $\Gamma_+=\partial \Omega \setminus \Gamma_-$ is the outflow boundary satisfying $\bbeta \cdot \bn \geq 0$, and $H_{0,\Gamma_+}^1(\Omega)$ is the subspace of $H^1(\Omega)$ with vanishing boundary value on $\Gamma_+$; i.e.,
$$
H_{0,\Gamma_+}^1(\Omega)=\{v \in H^1(\Omega): v=0 \ \text{on}\ \Gamma_+\}.
$$
Secondly, using the weak gradient operator $\nabla_w$ \cite{wy3655},  we may reformulate \eqref{weakform} as follows:
\begin{equation}\label{weakform-02}
(u, \bbeta \cdot \nabla_w\{\sigma\}- c \sigma)=\langle g, \bbeta \cdot \bn \sigma\rangle_{\Gamma_-}-(f,\sigma), \qquad \forall \sigma\in H_{0,\Gamma_+}^1(\Omega),
\end{equation}
where $\{\sigma\} = \{\sigma|_T, \sigma|_\pT\}$ is understood as a weak function in the WG context. The weak function is discretized by piecewise polynomials in each element $T$ as well as on its boundary $\pT$. The weak gradient operator $\nabla_w$ is then approximated by vector-valued polynomials, denoted as $\nabla_{w, h}$. The weak formulation (\ref{weakform-02}) can then be approximated by seeking $u_h\in M_h$ (i.e., trial space) such that
\begin{equation}\label{EQ:10-12-2015:01}
(u_h, \bbeta \cdot \nabla_{w, h}{\sigma}- c \sigma)=\langle g, \bbeta \cdot \bn \sigma\rangle_{\Gamma_-}-(f,\sigma_0), \qquad \forall \sigma\in W_h^{0,\Gamma_+},
\end{equation}
where $W_h^{0,\Gamma_+}$ is a test space for the weak functions with vanishing boundary value on $\Gamma_+$. However, the problem (\ref{EQ:10-12-2015:01}) is not well-posed unless the {\em inf-sup} condition of Babu\u{s}ka \cite{babuska} is satisfied. The primal-dual scheme overcomes this difficulty by coupling \eqref{EQ:10-12-2015:01} with its dual equation which seeks $\lambda_h\in W_h^{0,\Gamma_+}$ satisfying
\begin{equation}\label{EQ:09-12-2018:01}
(v, \bbeta\cdot \nabla_{w, h} \lambda_h-c\lambda_h) =0, \qquad
\forall v \in M_h.
\end{equation}
A formal coupling between \eqref{EQ:10-12-2015:01} and \eqref{EQ:09-12-2018:01} makes an effective numerical scheme through the use of a stabilizer, denoted by $s(\lambda, \sigma)$. This stabilizer measures the level of ``continuity" of $\sigma \in W_{h}$ in the sense that $\sigma \in W_{h}$ is of classical $C^0$-conforming if and only if $s(\sigma, \sigma)=0$. The resulting primal-dual weak Galerkin method for solving the hyperbolic model problem (\ref{model}) seeks $u_h\in M_h$ and $\lambda_h\in W_h^{0,\Gamma_+}$, such that
\begin{equation}\label{primal-dual-wg}
 \left\{\begin{split}
s(\lambda_h, \sigma)+(u_h, \bbeta \cdot \nabla_{w, h}{\sigma}- c \sigma)=&\langle g, \bbeta \cdot \bn \sigma\rangle_{\Gamma_-}-(f,\sigma_0), \  \forall \sigma\in W_h^{0,\Gamma_+},\\
(v, \bbeta\cdot \nabla_{w, h} \lambda_h-c\lambda_h) =&0, \qquad\qquad\qquad\qquad\qquad
\forall v \in M_h,
 \end{split} \right.
\end{equation}
where $s(\cdot, \cdot)$ is known as the stabilizer or smoother which enforces a certain weak continuity for the numerical Lagrangian multiplier $\lambda_h$ in the weak finite element space $W_h^{0,\Gamma_+}$.

In this paper, we show that the PD-WG finite element method \eqref{primal-dual-wg} has one and only one solution if the linear convection problem \eqref{model} admits at most one solution. The numerical scheme  \eqref{primal-dual-wg} will be shown to be conservative locally on each element in the sense that there exists a numerical solution $\tilde u_h$ and a numerical flux $\textbf{F}_h$ satisfying (see Theorem \ref{THM:conservation})
\begin{equation*}%\label{mas-discrete}
\int_{\partial T}\textbf{F}_h \cdot \bn ds+\int_T c\tilde u_h dT=\int_T fdT,\qquad T\in \T_h.
\end{equation*}
Some optimal order error estimates will be derived for the numerical solution arising from the PD-WG scheme \eqref{primal-dual-wg} under ultra-weak assumptions on the convection vector $\bbeta$ and the reaction coefficient $c$.

The paper is organized as follows. In Section \ref{Section:DWG} we shall briefly review the weak gradient operator as well as its discretization.
In Section \ref{Section:WGFEM}, we give a detailed presentation on the primal-dual weak Galerkin algorithm for the linear hyperbolic problem (\ref{model}). A discussion on the solvability (i.e., the solution existence and uniqueness) of the PD-WG scheme is given in Section \ref{Section:EU}. In Section \ref{Section:MC} we show that the PD-WG method offers numerical solutions that conserve mass locally on each element. Next in Section \ref{Section:EE}, we shall derive an error equation for the PD-WG approximations. In Section \ref{Section:TechEst}, we establish some technical estimates/results useful in convergence analysis. Section \ref{Section:ErrorEstimate} is devoted to the establishment of an optimal order error estimate for the PD-WG approximations in some discrete Sobolev norms. In Section \ref{Section:L2Error}, an error estimate in a weak $L^2$ topology is derived based on a local $H^{1}$-regularity assumption for the dual problem. In Section \ref{Section:Numerics}, a series of numerical results are reported to demonstrate the effectiveness and accuracy of the PD-WG method developed in the previous sections.

Throughout the paper, we follow the usual notations for Sobolev spaces and norms. For any open bounded domain $D\subset \mathbb{R}^d$ with Lipschitz continuous boundary, denote by $\|\cdot\|_{s, D}$ and $|\cdot|_{s, D}$ the norm and semi-norm in the Sobolev space $H^s(D)$ for any $s\ge 0$, respectively. The inner product in $H^s(D)$ is denoted by $(\cdot, \cdot)_{s,D}$. The space $H^0(D)$ coincides with $L^2(D)$, for which the norm and the inner product are denoted by $\|\cdot\|_{D}$ and $(\cdot, \cdot)_{D}$, respectively. When $D=\Omega$, or when the domain of integration is clear from the context,  the subscript $D$ is dropped in the norm and the inner product notations.
%For convenience, throughout the paper, we use ``$\lesssim$ '' to denote ``less than or equal to up to a general constant independent of the mesh size or functions appearing in the inequality".

%The norms in $H^{s}(D)$ for $s<0$ are defined by duality with the norms in $H^{|s|}(D)$.

\section{Discrete Weak Gradient}\label{Section:DWG}
The principle differential operator in the weak formulation (\ref{weakform}) for the linear convection equation in divergence form (\ref{model}) is given by the gradient operator. In this section we shall review the weak gradient operator as well as its discrete version introduced originally in \cite{wy3655}.

Let $T$ be a polygonal or polyhedral domain with boundary $\partial T$. By a weak function on $T$ we mean a pair $v=\{v_0,v_b\}$ such that $v_0\in L^2(T)$ and $v_b\in L^{2}(\partial T)$. The components $v_0$ and $v_b$ can be understood as the value of $v$ in the interior and on the boundary of $T$, respectively. Note that $v_b$ is not necessarily the trace of $v_0$ on $\partial T$, though taking $v_b$ as the trace of $v_0$ on $\partial T$ is a feasible option. Denote by $\W(T)$ the space of weak functions on $T$; i.e.,
\begin{equation*}\label{2.1}
\W(T)=\{v=\{v_0, v_b\}: v_0 \in L^2(T), v_b \in L^{2}(\partial T)\}.
\end{equation*}

The weak gradient of $v\in \W(T)$, denoted by $\nabla_w v$, is defined as a continuous linear functional in the Sobolev space $[H^1(T)]^d$ satisfying
\begin{equation*}
(\nabla_w  v,\boldsymbol{\psi})_T=-(v_0,\nabla \cdot \boldsymbol{\psi})_T+\langle v_b,\boldsymbol{\psi}\cdot \textbf{n}\rangle_{\partial T},  \qquad \forall \boldsymbol{\psi}\in [H^1(T)]^d.
\end{equation*}

Denote by $P_r(T)$ the space of polynomials on $T$ with degree $r$ and less. A discrete version of $\nabla_{w} v$  for $v\in \W(T)$, denoted by $\nabla_{w, r, T}v$, is defined as the unique polynomial-valued vector in $[P_r(T) ]^d$ satisfying
\begin{equation}\label{disgradient}
(\nabla_{w, r, T} v, \boldsymbol{\psi})_T=-(v_0, \nabla \cdot \boldsymbol{\psi})_T+\langle v_b, \boldsymbol{\psi} \cdot \textbf{n}\rangle_{\partial T}, \quad\forall\boldsymbol{\psi}\in [P_r(T)]^d,
\end{equation}
which, from the usual integration by parts, gives
\begin{equation}\label{disgradient*}
(\nabla_{w, r, T} v, \boldsymbol{\psi})_T= (\nabla v_0, \boldsymbol{\psi})_T-\langle v_0- v_b, \boldsymbol{\psi} \cdot \textbf{n}\rangle_{\partial T}, \quad\forall\boldsymbol{\psi}\in [P_r(T)]^d,
\end{equation}
provided that $v_0\in H^1(T)$.

\section{Primal-Dual Weak Galerkin Algorithm}\label{Section:WGFEM}
Let ${\cal T}_h$ be a partition of the domain $\Omega \subset {\mathbb R}^d (d=2, 3)$ into polygons in 2D or polyhedra in 3D which is shape regular described as in \cite{wy3655}. Denote by ${\mathcal E}_h$ the set of all edges or flat faces in ${\cal T}_h$ and ${\mathcal E}_h^0={\mathcal E}_h \setminus \partial\Omega$ the set of all interior edges or flat faces. Denote by $h_T$ the meshsize of $T\in {\cal T}_h$ and
$h=\max_{T\in {\cal T}_h}h_T$ the meshsize of the partition ${\cal T}_h$.

For any integer $j\geq 0$, denote by $W_j(T)$ the local space of discrete weak functions; i.e.,
$$
W_j(T)=\{\{\sigma_0,\sigma_b\}:\sigma_0\in P_j(T),\sigma_b\in P_j(e), e\subset \partial T\}.
$$
Patching $W_j(T)$ over all the elements $T\in {\cal T}_h$ through a common value $v_b$ on the interior interface $\E_h^0$ yields a global weak finite element space $W_{j,h}$. Let $W_{j,h}^{0, \Gamma_+}$ be the subspace of $W_{j,h}$ with vanishing boundary values on $\Gamma_+$; i.e.,
$$
W_{j,h}^{0, \Gamma_+}=\{\{\sigma_0, \sigma_b\}\in W_{j,h}: \sigma_b|_{e}=0, e\subset \Gamma_+\}.
$$
For any integer $k\ge 1$, let $M_{k-1,h}$ be the space of piecewise polynomials of degree $k-1$; i.e.,
$$
M_{k-1,h}=\{w: w|_T\in P_{k-1}(T), \forall T\in {\cal T}_h\}.
$$

The discrete weak gradient $\nabla_{w,r,T}$ shall be taken in the polynomial subspace $P_r(T)$ with $r=k-1$. For simplicity of notation and without confusion, denote by $\nabla_{w}\sigma$ the discrete weak gradient
$\nabla_{w, k-1, T}\sigma$ for any $\sigma\in W_{j,h}$ computed by
(\ref{disgradient}) on each element $T$; i.e.,
$$
(\nabla_{w}\sigma)|_T= \nabla_{w, k-1, T}(\sigma|_T), \qquad \sigma\in W_{j, h}.
$$

For any $\rho, \sigma\in W_{j,h}$ and $v\in M_{k-1,h}$, we introduce the
following bilinear forms
\begin{eqnarray}
s(\rho, \sigma)&=&\sum_{T\in {\cal T}_h}s_T(\rho, \sigma),\label{stabilizer}
\\
b(v, \sigma)&=&\sum_{T\in {\cal T}_h}b_T(v, \sigma),\label{b-form}
\end{eqnarray}
where
\begin{eqnarray}\label{stabilizer-local}
s_T( \rho, \sigma)&=&h_T^{-1}\int_{\partial T} (\rho_0-\rho_b)(\sigma_0-\sigma_b)ds\\
& & +\tau \int_{ T} (\bbeta\cdot\nabla\rho_0-c\rho_0)(\bbeta\cdot\nabla\sigma_0-c\sigma_0)dT,\nonumber\\
b_T(v, \sigma)&=&(v, \bbeta \cdot \nabla_w \sigma -c \sigma_0)_T,\label{b-form-local}
\end{eqnarray}
with $\tau\geq 0$ being a parameter.

The numerical scheme for the linear convection equation (\ref{model}) in divergence form based on the variational formulation (\ref{weakform}) in the framework of primal-dual approach is given as follows:
\begin{algorithm}
Find $(u_h;\lambda_h)\in M_{k-1,h} \times W_{j,h}^{0, \Gamma_+}$ such that
\begin{eqnarray}\label{32}
s(\lambda_h, \sigma)+b(u_h,\sigma)&=& \sum_{e\subset \Gamma_-}\langle \sigma_b, \bbeta \cdot \bn g  \rangle_{e}-(f,\sigma_0), \qquad \forall\sigma\in W_{j,h}^{0, \Gamma_+},\\
b(v,\lambda_h)&=&0,\qquad \qquad\qquad\qquad \qquad \qquad \forall v\in M_{k-1,h}.\label{2}
\end{eqnarray}
\end{algorithm}

\section{Solution Existence and Uniqueness}\label{Section:EU}

 The following is the adjoint problem for the linear transport equation: For a given $\theta \in L^2(\Omega)$, find $\Psi$ such that
\begin{eqnarray}\label{mo}
\bbeta \cdot \nabla \Psi-c\Psi&=&\theta \qquad \mbox{ in } \Omega,\\
\Psi & = & 0 \qquad\mbox { on } \Gamma_+. \label{mo-2}
\end{eqnarray}

The adjoint problem (\ref{mo})-(\ref{mo-2}) is said to have the $H^{\gamma}$-regularity with some parameter  $\gamma \in (0,1]$
if it has a solution $\Psi\in H^\gamma(\Omega)$ satisfying
\begin{equation}\label{reg}
\|\Psi \|_{\gamma} \leq C\|\theta\|,
\end{equation}
with a generic constant $C$.

For simplicity, we now introduce several $L^2$ projection operators. On each element $T$, denote by $Q_0$ the $L^2$ projection operator onto $P_j(T)$. For each edge or face $e\subset\partial T$, denote by $Q_b$ the $L^2$ projection operator onto $P_{j}(e)$. For any $w\in H^1(\Omega)$,  we use $Q_h w$ to denote the $L^2$ projection of $w$ in the finite element space $W_{j,h}$ given by
$$
Q_hw=\{Q_0w,Q_bw\}
$$
on each element $T$. ${\cal Q}_h$ is used to denote the $L^2$ projection operator onto the space $M_{k-1,h}$.

\begin{lemma}\label{Lemma5.1} \cite{wy3655} For $j\ge k-1$, the $L^2$ projection operators $Q_h$ and ${\cal Q}_h$ satisfy the following commutative property:
\begin{equation}\label{l}
\nabla_{w}(Q_h w) = {\cal Q}_h(\nabla w), \qquad \forall w\in H^1(T).
\end{equation}
\end{lemma}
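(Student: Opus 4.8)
The plan is to establish the identity elementwise by testing both sides against an arbitrary vector polynomial $\boldsymbol{\psi}\in[P_{k-1}(T)]^d$ and invoking the characterizing property of the $L^2$ projection. Since $r=k-1$, both $\nabla_{w}(Q_h w)|_T=\nabla_{w,k-1,T}(Q_h w)$ and $\mathcal{Q}_h(\nabla w)|_T$ lie in $[P_{k-1}(T)]^d$ (the latter understood componentwise), so it suffices to show that they produce the same $L^2(T)$ inner product against every such $\boldsymbol{\psi}$.

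First I would start from the definition of the discrete weak gradient. For $\boldsymbol{\psi}\in[P_{k-1}(T)]^d$, equation \eqref{disgradient} gives
\[
(\nabla_{w,k-1,T}(Q_h w),\boldsymbol{\psi})_T=-(Q_0 w,\nabla\cdot\boldsymbol{\psi})_T+\langle Q_b w,\boldsymbol{\psi}\cdot\textbf{n}\rangle_{\partial T}.
\]
The crucial step is a degree count that lets me discard the projections $Q_0$ and $Q_b$. Since $\nabla\cdot\boldsymbol{\psi}\in P_{k-2}(T)\subset P_j(T)$ and $\boldsymbol{\psi}\cdot\textbf{n}|_e\in P_{k-1}(e)\subset P_j(e)$ on each flat face $e\subset\partial T$ -- both inclusions using the hypothesis $j\ge k-1$ -- the defining orthogonality of the $L^2$ projections yields $(Q_0 w,\nabla\cdot\boldsymbol{\psi})_T=(w,\nabla\cdot\boldsymbol{\psi})_T$ and $\langle Q_b w,\boldsymbol{\psi}\cdot\textbf{n}\rangle_{\partial T}=\langle w,\boldsymbol{\psi}\cdot\textbf{n}\rangle_{\partial T}$.

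Next, because $w\in H^1(T)$ admits a well-defined trace on $\partial T$, ordinary integration by parts converts the right-hand side into $(\nabla w,\boldsymbol{\psi})_T$, so that
\[
(\nabla_{w,k-1,T}(Q_h w),\boldsymbol{\psi})_T=(\nabla w,\boldsymbol{\psi})_T,\qquad\forall\,\boldsymbol{\psi}\in[P_{k-1}(T)]^d.
\]
On the other hand, the definition of the $L^2$ projection gives $(\mathcal{Q}_h(\nabla w),\boldsymbol{\psi})_T=(\nabla w,\boldsymbol{\psi})_T$ for the same class of test functions. Subtracting, the difference $\nabla_{w,k-1,T}(Q_h w)-\mathcal{Q}_h(\nabla w)\in[P_{k-1}(T)]^d$ is $L^2(T)$-orthogonal to all of $[P_{k-1}(T)]^d$ and therefore vanishes, which is precisely the claimed identity.

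I do not expect a genuine obstacle here: the argument is a direct unwinding of definitions. The only point requiring care is the polynomial degree bookkeeping that justifies removing the projections, and in particular the boundary term, which is where the hypothesis $j\ge k-1$ is actually binding (the interior term alone would require only $j\ge k-2$).
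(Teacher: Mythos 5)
Your proof is correct and complete: testing against $\boldsymbol{\psi}\in[P_{k-1}(T)]^d$, removing the projections $Q_0$ and $Q_b$ by the degree counts $\nabla\cdot\boldsymbol{\psi}\in P_{k-2}(T)\subset P_j(T)$ and $\boldsymbol{\psi}\cdot\mathbf{n}|_e\in P_{k-1}(e)\subset P_j(e)$, and then integrating by parts is exactly the standard argument for this commutativity identity. The paper itself offers no proof---it cites \cite{wy3655}---and your argument coincides with the one given there, including the correct observation that the boundary term is where the hypothesis $j\ge k-1$ is actually needed.
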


%\begin{remark}
For the convenience of analysis, in what follows of this paper, we assume that the convection vector $\bbeta$ and the reaction coefficient $c$ are both piecewise constants with respect to the partition $\T_h$. However, all the analysis and results can be generalized and extended to piecewise smooth cases for the convection vector $\bbeta$ and the reaction coefficient $c$.
%\end{remark}

\begin{theorem}\label{thmunique1} Assume that the linear transport problem \eqref{model} and the dual problem \eqref{mo}-\eqref{mo-2} have unique solutions. If $\tau>0$, then the PD-WG algorithm (\ref{32})-(\ref{2}) has one and only one solution for $j=k-1, k$.
\end{theorem}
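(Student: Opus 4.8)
The plan is to exploit that, once the partition $\T_h$ is fixed, the system (\ref{32})--(\ref{2}) is a square linear system: the first equation is tested over $\sigma\in W_{j,h}^{0,\Gamma_+}$ and the second over $v\in M_{k-1,h}$, so the number of equations equals the number of unknowns $(u_h;\lambda_h)\in M_{k-1,h}\times W_{j,h}^{0,\Gamma_+}$. Hence existence and uniqueness are equivalent, and it suffices to prove that the homogeneous problem (with $f=0$ and $g=0$) admits only the trivial solution. So assume $s(\lambda_h,\sigma)+b(u_h,\sigma)=0$ for all $\sigma\in W_{j,h}^{0,\Gamma_+}$ and $b(v,\lambda_h)=0$ for all $v\in M_{k-1,h}$.

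First I would establish $\lambda_h=0$. Choosing $\sigma=\lambda_h$ in the first homogeneous equation and $v=u_h\in M_{k-1,h}$ in the second gives $s(\lambda_h,\lambda_h)+b(u_h,\lambda_h)=0$ and $b(u_h,\lambda_h)=0$, whence $s(\lambda_h,\lambda_h)=0$. Since $\tau>0$, both nonnegative contributions in (\ref{stabilizer-local}) must vanish: $\lambda_0=\lambda_b$ on $\partial T$ for every $T$, and $\bbeta\cdot\nabla\lambda_0-c\lambda_0=0$ on each $T$. The single-valuedness of $\lambda_b$ together with $\lambda_0=\lambda_b$ forces $\lambda_0$ to be continuous across $\E_h^0$, hence $\lambda_0\in H^1(\Omega)$, and $\lambda_b=0$ on $\Gamma_+$ gives $\lambda_0\in H_{0,\Gamma_+}^1(\Omega)$. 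Thus $\lambda_0$ solves the homogeneous adjoint problem (\ref{mo})--(\ref{mo-2}) with $\theta=0$, and the assumed uniqueness for the dual problem yields $\lambda_0\equiv0$, so that $\lambda_h=0$.

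The main work is to deduce $u_h=0$ from the remaining identity $b(u_h,\sigma)=0$ for all $\sigma\in W_{j,h}^{0,\Gamma_+}$. Because $\bbeta$ is piecewise constant and $u_h\in P_{k-1}(T)$, the vector $u_h\bbeta$ lies in $[P_{k-1}(T)]^d$, so I may insert $\boldsymbol{\psi}=u_h\bbeta$ into the definition (\ref{disgradient}) of $\nabla_w\sigma$ to rewrite, on each $T$,
\begin{equation*}
b_T(u_h,\sigma)=-(\sigma_0,\nabla\cdot(\bbeta u_h)+cu_h)_T+\langle \sigma_b,\bbeta\cdot\bn\,u_h\rangle_{\partial T}.
\end{equation*}
Taking $\sigma_b=0$ and letting $\sigma_0$ range over $P_j(T)\supseteq P_{k-1}(T)$ (here $j\ge k-1$) shows $\nabla\cdot(\bbeta u_h)+cu_h=0$ on every $T$, the elementwise homogeneous transport equation. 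The identity then collapses to $\sum_{T}\langle\sigma_b,\bbeta\cdot\bn\,u_h\rangle_{\partial T}=0$; regrouping edge by edge, the freedom of $\sigma_b\in P_j(e)$ on each interior edge and on each inflow edge forces $\bbeta\cdot\bn\ljump u_h\rjump=0$ across $\E_h^0$ and $u_h=0$ on $\Gamma_-$ (recall $\bbeta\cdot\bn<0$ there).

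Finally I would verify that these three facts say precisely that $u_h\in L^2(\Omega)$ is a weak solution of the homogeneous problem: for any $\phi\in H_{0,\Gamma_+}^1(\Omega)$, integrating by parts elementwise in $(u_h,\bbeta\cdot\nabla\phi-c\phi)$ produces the interior residual $\nabla\cdot(\bbeta u_h)+cu_h$ (which vanishes), interior edge terms weighted by $\bbeta\cdot\bn\ljump u_h\rjump$ (which vanish), the $\Gamma_+$ term (where $\phi=0$), and the $\Gamma_-$ term (where $u_h=0$); hence $(u_h,\bbeta\cdot\nabla\phi-c\phi)=0$, i.e.\ $u_h$ satisfies (\ref{weakform}) with $f=0$, $g=0$. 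By the assumed uniqueness for the transport problem \eqref{model}, $u_h\equiv0$. Together with $\lambda_h=0$ this gives triviality of the homogeneous solution and, by the square-system argument, the existence and uniqueness claimed. The delicate step is the passage from the discrete orthogonality $b(u_h,\cdot)=0$ to the genuine weak-solution identity; it hinges on $\bbeta$ being piecewise constant (so that $u_h\bbeta$ is an admissible test vector in (\ref{disgradient})) and on $j\ge k-1$ (so that $\sigma_0$ and $\sigma_b$ are rich enough to annihilate the interior residual and the flux jumps).
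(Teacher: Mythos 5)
Your proposal is correct and follows essentially the same path as the paper's proof: reduce to the homogeneous system, use $s(\lambda_h,\lambda_h)=0$ with $\tau>0$ plus uniqueness of the adjoint problem to get $\lambda_h\equiv 0$, then test $b(u_h,\cdot)=0$ with element/edge-wise choices of $\sigma_0$ and $\sigma_b$ to force $\nabla\cdot(\bbeta u_h)+cu_h=0$ in each $T$, vanishing flux jumps, and $u_h=0$ on $\Gamma_-$, and finally invoke uniqueness for \eqref{model}. The only differences are presentational: you split the test-function choice into two steps (interior residual, then jumps) where the paper uses the single weighted choice $\sigma_0=-h_T^2(\nabla\cdot(\bbeta u_h)+cu_h)$, $\sigma_b=h_T\ljump \bbeta u_h\cdot\bn\rjump$, and you make explicit the square-system argument and the passage back to the weak formulation \eqref{weakform}, which the paper leaves implicit.
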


\begin{proof} It suffices to show that the homogeneous problem of (\ref{32})-(\ref{2}) has only the trivial solution. To this end, we assume $f=0$ and $g=0$. By letting $v=u_h$ and $\sigma=\lambda_h$ in (\ref{32}) and (\ref{2}), we obtain $s(\lambda_h, \lambda_h)=0$, which implies $\lambda_0=\lambda_b$ and $\bbeta\cdot\nabla\lambda_0-c\lambda_0=0$ on each $\partial T$. It follows that
\begin{equation}\label{aa-001}
\bbeta\cdot\nabla \lambda_0-c\lambda_0=0\quad  \mbox{strongly in $\Omega$}.
\end{equation}
Since, on the portion $\Gamma_+$ of the domain boundary, $\lambda_0=\lambda_b\equiv 0$ holds true, then from the solution uniqueness for the adjoint problem \eqref{mo}-\eqref{mo-2} we have $\lambda_0 \equiv 0$ in $\Omega$.
It follows from $\lambda_0=\lambda_b$ on each  $\partial T$ that $\lambda_b\equiv0$ and thus $\lambda_h\equiv 0$.

Next, substituting $\lambda_h\equiv 0$ into (\ref{32}) gives
$$
b(u_h,\sigma)=0,  \qquad \forall\sigma\in W_{j,h}^{0, \Gamma_+}.
$$
From the usual integration by parts and (\ref{disgradient}) we obtain
\begin{equation}\label{eee}
\begin{split}
0=& b(u_h,\sigma)\\
 =& \sum_{T\in {\cal T}_h} (u_h, \bbeta \cdot \nabla_w\sigma-c\sigma_0)_T\\
= & \sum_{T\in {\cal T}_h} -(\sigma_0,  \nabla\cdot(\bbeta u_h))_T+\langle \sigma_b, \bbeta u_h\cdot \bn\rangle_{\partial T}-(u_h, c\sigma_0)_T\\
= & -\sum_{T\in {\cal T}_h}(\sigma_0, \nabla\cdot(\bbeta u_h)+cu_h)_T+\sum_{e\subset {\cal E}_h \setminus \Gamma_+}\langle \sigma_b, \ljump\bbeta u_h\cdot \bn\rjump\rangle_{e},\\
\end{split}
\end{equation}
where we have used $\sigma_b=0$ on $\Gamma_+$ on the last line, and $\ljump\bbeta u_h\cdot \bn\rjump$ is the jump of $\bbeta u_h\cdot \bn$ on $e\subset {\cal E}_h \setminus \Gamma_+$ in the sense that $\ljump\bbeta u_h\cdot \bn\rjump=\bbeta u_h|_{T_1}\cdot \bn_1 + \bbeta u_h|_{T_2}\cdot \bn_2$ for $e=\partial T_1\cap\partial T_2\subset  {\cal E}_h^0$ with $\bn_1$ and $\bn_2$ being the unit outward normal directions to $\partial T_1$ and $\partial T_2$, respectively, and $\ljump\bbeta u_h\cdot \bn\rjump=\bbeta u_h\cdot \bn$ for $e\subset \Gamma_-$. By setting $\sigma_0=-h_T^2 (\nabla \cdot (\bbeta  u_h)+cu_h)$ on each $T\in {\cal T}_h$ and $\sigma_b= h_T \ljump  \bbeta u_h  \cdot \bn\rjump$ on each $e \subset {\cal E}_h \setminus \Gamma_+$, we may rewrite (\ref{eee}) as follows:    \begin{equation*}
\begin{split}
0= \sum_{T\in {\cal T}_h} h_T^2 \|\nabla \cdot (\bbeta  u_h)+cu_h\|_T^2+ \sum_{e \subset {\cal E}_h \setminus \Gamma_+} h_T\|\ljump\bbeta u_h\cdot \bn\rjump\|_e^2,
\end{split}
\end{equation*}
which gives $\nabla \cdot (\bbeta  u_h)+cu_h=0$ on each $T\in {\cal T}_h$,  $\ljump\bbeta u_h\cdot \bn\rjump=0$ on each $e \subset {\cal E}_h^0$, and $\bbeta u_h\cdot \bn=0$ on each $e\subset\Gamma_-$. This implies that  $\nabla \cdot (\bbeta  u_h)+cu_h=0$ in $\Omega$ and $u_h=0$ on $\Gamma_-$. Thus, from the solution uniqueness assumption, we have $u_h\equiv 0$ in $\Omega$. This completes the proof.
\end{proof}

\begin{theorem} Assume that the linear transport problem \eqref{model} and the dual problem \eqref{mo}-\eqref{mo-2} have unique solutions. The following results hold true:
\begin{itemize}
\item if $j=k-1$, then the PD-WG algorithm (\ref{32})-(\ref{2}) has one and only one solution for any non-negative value of the stabilizer parameter $\tau\ge 0$.
\item if additionally the dual problem \eqref{mo}-\eqref{mo-2} has the $H^{\gamma}$- regularity (\ref{reg}) with some $0<\gamma \leq 1$, then the PD-WG algorithm (\ref{32})-(\ref{2}) has a unique solution for any $\tau=0$ and $j=k-1, k$ provided that the meshsize $h<h_0$ holds true for a sufficiently small, but fixed $h_0>0$.
    \end{itemize}
 \end{theorem}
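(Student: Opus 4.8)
The plan is to exploit the finite-dimensionality of the scheme: since \eqref{32}--\eqref{2} is a square linear system, existence is equivalent to uniqueness, so it suffices to show that the homogeneous problem ($f=0$, $g=0$) admits only the trivial solution. Following the opening step of Theorem \ref{thmunique1}, I would take $\sigma=\lambda_h$ in \eqref{32} and $v=u_h$ in \eqref{2} and subtract to obtain $s(\lambda_h,\lambda_h)=0$. When $\tau=0$ the volumetric term of $s_T$ is absent, so this identity no longer forces $\bbeta\cdot\nabla\lambda_0-c\lambda_0=0$; it yields only $\lambda_0=\lambda_b$ on every $\partial T$. This continuity has two consequences I would record immediately: first, $\lambda_0$ is a continuous piecewise polynomial, hence $\lambda_0\in H^1(\Omega)$ with $\lambda_0=0$ on $\Gamma_+$; second, by \eqref{disgradient*} the boundary term drops out and $\nabla_w\lambda_h$ coincides with the $L^2$-projection of $\nabla\lambda_0$ onto $[P_{k-1}(T)]^d$. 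Since $\nabla\lambda_0$ already lies in $[P_{k-1}(T)]^d$ for both $j=k-1,k$, this projection is exact and $\nabla_w\lambda_h=\nabla\lambda_0$.

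For the first assertion ($j=k-1$), the $\tau>0$ case is already covered by Theorem \ref{thmunique1}, so only $\tau=0$ needs attention. Set $\theta_h:=\bbeta\cdot\nabla\lambda_0-c\lambda_0$. A degree count is decisive: with $\lambda_0\in P_{k-1}(T)$ and $\bbeta,c$ piecewise constant, $\bbeta\cdot\nabla\lambda_0\in P_{k-2}(T)$ and $c\lambda_0\in P_{k-1}(T)$, so $\theta_h|_T\in P_{k-1}(T)=M_{k-1,h}|_T$. Equation \eqref{2}, rewritten using $\nabla_w\lambda_h=\nabla\lambda_0$, reads $\sum_{T}(v,\theta_h)_T=0$ for all $v\in M_{k-1,h}$; choosing $v=\theta_h$ gives $\theta_h\equiv0$, i.e. $\bbeta\cdot\nabla\lambda_0-c\lambda_0=0$ strongly in $\Omega$. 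With $\lambda_0=0$ on $\Gamma_+$, the uniqueness assumption for \eqref{mo}--\eqref{mo-2} forces $\lambda_0\equiv0$, hence $\lambda_b\equiv0$ and $\lambda_h\equiv0$. The vanishing of $u_h$ then follows verbatim from the second half of Theorem \ref{thmunique1}, whose substitutions only require $j\ge k-1$.

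For the second assertion the genuinely new difficulty is $j=k$, where the same count now places $c\lambda_0$ in $P_k(T)$, so $\theta_h$ has a degree-$k$ part that the test space $M_{k-1,h}$ cannot detect; \eqref{2} gives only ${\cal Q}_h\theta_h=0$. Here I would run a duality loop driven by the $H^\gamma$-regularity \eqref{reg}. Writing $\|\theta_h\|^2=(\theta_h,\bbeta\cdot\nabla\lambda_0-c\lambda_0)$ and noting that $\bbeta\cdot\nabla\lambda_0\in M_{k-1,h}$ is orthogonal to $\theta_h$, I reduce to $\|\theta_h\|^2=-(\theta_h,c\lambda_0)$. Since $c$ is piecewise constant, ${\cal Q}_h(c\lambda_0)=c\,{\cal Q}_h\lambda_0\in M_{k-1,h}$ and orthogonality again removes the projected part, leaving $\|\theta_h\|^2=-(\theta_h,c(\lambda_0-{\cal Q}_h\lambda_0))$. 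A standard approximation estimate $\|\lambda_0-{\cal Q}_h\lambda_0\|\le Ch^\gamma\|\lambda_0\|_\gamma$ together with the regularity bound $\|\lambda_0\|_\gamma\le C\|\theta_h\|$ (valid because $\lambda_0$ is the adjoint solution with data $\theta_h$ and vanishing trace on $\Gamma_+$) yields $\|\theta_h\|^2\le C'h^\gamma\|\theta_h\|^2$. Choosing $h<h_0$ with $C'h_0^\gamma<1$ forces $\theta_h=0$, whence $\lambda_0=0$ by uniqueness and, as before, $u_h=0$.

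The main obstacle is precisely the $j=k$, $\tau=0$ case: without the volumetric stabilizer the scheme can no longer certify the strong adjoint equation algebraically, and the leading degree-$k$ component of $\theta_h$ is invisible to $M_{k-1,h}$. The regularity-driven estimate is the essential device that converts this invisible component into a quantity of size $O(h^\gamma)\|\theta_h\|$, and the smallness condition $h<h_0$ is exactly what absorbs it and closes the loop. I would finally double-check two routine points: that the projection estimate holds over the full range $0<\gamma\le1$ (it does, since $P_{k-1}$ contains the constants), and that the algebraic data built in the $u_h$-step remain admissible test functions in $W_{j,h}^{0,\Gamma_+}$ when $j=k$.
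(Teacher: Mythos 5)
Your proposal is correct and follows essentially the same route as the paper's proof: both reduce to the homogeneous system, use $s(\lambda_h,\lambda_h)=0$ to get $\lambda_0=\lambda_b$ (hence $\lambda_0\in H^1(\Omega)$ and $\nabla_w\lambda_h=\nabla\lambda_0$), identify from \eqref{2} that the residual $\theta=\bbeta\cdot\nabla\lambda_0-c\lambda_0$ equals $c({\cal Q}_h\lambda_0-\lambda_0)$ up to ${\cal Q}_h$-orthogonality, kill it by a degree count when $j=k-1$, and absorb it via the $H^\gamma$-regularity bound and the projection estimate when $j=k$, before recycling the $u_h$-argument of Theorem \ref{thmunique1}. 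The only (cosmetic) difference is that you close the smallness contraction in $\|\theta_h\|$ whereas the paper closes it in $\|\lambda_0\|_\gamma$; the two are equivalent given the same two ingredients.
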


\begin{proof} It suffices to show that the homogeneous problem of (\ref{32})-(\ref{2}) has only the trivial solution. To this end, we assume $f=0$ and $g=0$. By letting $v=u_h$ and $\sigma=\lambda_h$ in (\ref{32}) and (\ref{2}) we arrive at $s(\lambda_h, \lambda_h)=0$,
 which implies $\lambda_0=\lambda_b$ on each $\partial T$ for any $\tau\ge 0$. It follows from (\ref{2}) and (\ref{disgradient*}) that
\begin{equation*}
\begin{split}
0=& b(v,\lambda_h)\\
 =& \sum_{T\in {\cal T}_h} (v, \bbeta \cdot \nabla_w\lambda_h-c\lambda_0)_T\\
= & \sum_{T\in {\cal T}_h}  ( \nabla \lambda_0, \bbeta v)_T-\langle \lambda_0-\lambda_b, \bbeta v\cdot \bn\rangle_{\partial T}-(v, c\lambda_0)_T\\
 = &  \sum_{T\in {\cal T}_h}  (\bbeta \cdot \nabla \lambda_0 -c  {\cal Q}_h  \lambda_0,   v)_T,
\end{split}
\end{equation*}
where we have used $\lambda_0=\lambda_b$ on each  $\partial T$. By taking $v=\bbeta \cdot \nabla \lambda_0 -c {\cal Q}_h  \lambda_0$ we obtain
$$
\bbeta \cdot \nabla \lambda_0 -c {\cal Q}_h \lambda_0=0
$$
on each element $T\in {\cal T}_h$. From $\lambda_0=\lambda_b$ on each  $\partial T$, we have $\lambda_0\in H^1(\Omega)$ so that
\begin{equation}\label{new:001}
\bbeta \cdot \nabla \lambda_0-c\lambda_0=\theta, \quad \text{in }  \Omega,
\end{equation}
where $\theta=c({\cal Q}_h \lambda_0-\lambda_0)$.

{\em Case 1: \ $j=k-1$.} Since ${\cal Q}_h \lambda_0 = \lambda_0\equiv 0$ in this case, then we have $\theta=0$. It follows from $\lambda_0|_{\Gamma_+} = 0$ and the solution uniqueness for \eqref{new:001} that $\lambda_0 \equiv 0$.

{\em Case 2: \ with the $H^{\gamma}$- regularity (\ref{reg}).} In this case, we use the $H^{\gamma}$- regularity assumption (\ref{reg}) and the error estimate for the $L^2$ projection operator $ {\cal Q}_h$ to obtain
$$
\|\lambda_0 \|_{\gamma} \leq C\|\theta\| =C\|c  {\cal Q}_h  \lambda_0-c\lambda_0\| \leq Ch^{\gamma}\|\lambda_0 \|_{\gamma},
$$
which gives
$$
(1-Ch^{\gamma})\|\lambda_0 \|_{\gamma} \leq 0.
$$
This implies that $\lambda_0 \equiv 0$ in $\Omega$ provided that the meshsize $h<h_0$ holds true for a sufficiently small but fixed $h_0>0$ such that $Ch^{\gamma}<1$.

For both cases, from $\lambda_0=\lambda_b$ on each $\partial T$, we obtain  $\lambda_b \equiv  0$ so that $\lambda_h \equiv 0$ is verified.

The proof of $u_h\equiv 0$ in $\Omega$ can be easily carried out by using exactly the same argument for obtaining $u_h\equiv 0$ in Theorem \ref{thmunique1}. Details are thus omitted here. This completes the proof.
\end{proof}

\section{Mass Conservation} \label{Section:MC}

The linear convection equation (\ref{model}) can be rewritten in a conservative form as follows:
\begin{eqnarray}\label{eq1}
\nabla \cdot \textbf{F} +cu&=&f, \\
\label{eq2}
 \textbf{F}&=&\bbeta u.
\end{eqnarray}
On each element $T\in \T_h$, we may integrate (\ref{eq1}) over $T$ to obtain the integral form of the mass conservation:
\begin{equation}\label{mas}
\int_{\partial T}\textbf{F} \cdot \bn ds+\int_T cu dT=\int_T fdT.
\end{equation}

We claim that  the numerical solution arising from the primal-dual weak Galerkin scheme (\ref{32})-(\ref{2}) for the linear convection problem (\ref{model}) retains the local mass conservation property (\ref{mas}) with a numerical flux $\bF_h$. To this end, for any given $T\in {\cal T}_h$, by choosing a test function  $\sigma=\{\sigma_0, \sigma_b=0\}$  in (\ref{32})  such that $\sigma_0=1$ on $T$ and $\sigma_0=0$ elsewhere, we obtain
$$
h_T^{-1} \langle \lambda_0-\lambda_b, 1-0  \rangle_{\partial T}-\tau(\bbeta\cdot\nabla\lambda_0-c\lambda_0,c)_T+ (u_h, \bbeta \cdot \nabla_w \sigma-c\cdot 1 )_T=-(f, 1)_T.
$$
It follows from (\ref{disgradient}) and the usual integration by parts  that
\begin{equation}\label{e1}
\begin{split}
&(f, 1)_T\\
= &h_T^{-1} \langle \lambda_b-\lambda_0, 1  \rangle_{\partial T} + (\nabla \cdot(\bbeta u_h), 1)_T + (cu_h, 1)_T+\tau(\bbeta\cdot\nabla\lambda_0-c\lambda_0,c)_T\\
=&h_T^{-1} \langle \lambda_b-\lambda_0, 1 \rangle_{\partial T} + \langle \bbeta u_h \cdot \bn, 1\rangle_{\partial T}+(c(u_h-\tau c\lambda_0+\tau\bbeta\cdot\nabla\lambda_0), 1)_T\\
=& \langle (h_T^{-1}(\lambda_b-\lambda_0)\bn +\bbeta u_h)\cdot \bn, 1  \rangle_{\partial T} + (c(u_h+\tau (\bbeta\cdot\lambda_0-c\lambda_0)), 1)_T,
\end{split}
\end{equation}
where $\bn$ is the outward normal direction to $\partial T$. The equation (\ref{e1}) implies that the primal-dual weak Galerkin algorithm (\ref{32})-(\ref{2}) conserves mass with a numerical solution and a numerical flux given by
$$
\tilde u_h = u_h + \tau (\bbeta\cdot\nabla\lambda_0- c \lambda_0),\quad \textbf{F}_h|_\pT= \bbeta u_h -h_T^{-1}(\lambda_0-\lambda_b)\bn.
$$

It remains to show that the numerical flux $\bF_h \cdot \bn$ is continuous across each interior edge or flat face. To this end, we choose a test function $\sigma=\{\sigma_0=0, \sigma_b\}$ in (\ref{32}) such that
$\sigma_b$ is arbitrary on one interior edge or flat face $e=\partial T_1\cap \partial T_2$, and $\sigma_b=0$ elsewhere, to obtain
\begin{equation*}\label{mass}
\begin{split}
0=&h_{T_1}^{-1} \langle \lambda_0-\lambda_b, -\sigma_b\rangle_{e\cap\partial T_1}+h_{T_2}^{-1} \langle \lambda_0-\lambda_b, -\sigma_b\rangle_{e\cap\partial T_2}\\
&+(u_h, \bbeta \cdot \nabla_w \sigma)_{T_1\cup T_2} \\
=&h_{T_1}^{-1} \langle \lambda_0-\lambda_b, -\sigma_b\rangle_{e\cap\partial T_1}+h_{T_2}^{-1} \langle \lambda_0-\lambda_b, -\sigma_b\rangle_{e\cap\partial T_2}\\
&+\langle \bbeta u_h \cdot \bn_{T_1}, \sigma_b\rangle_{e\cap\partial T_1}+\langle \bbeta u_h \cdot \bn_{T_2}, \sigma_b\rangle_{e\cap\partial T_2}\\
 =&\langle (\bbeta u_h-h_{T_1}^{-1} (\lambda_0-\lambda_b)\bn_{T_1}  ) \cdot \bn_{T_1},  \sigma_b\rangle_{e\cap\partial T_1}\\
  &+ \langle (\bbeta u_h -h_{T_2}^{-1} (\lambda_0-\lambda_b)\bn_{T_2}  )\cdot \bn_{T_2},  \sigma_b\rangle_{e\cap\partial T_2}\\
=&  \langle \bF_h|_{\pT_1} \cdot \bn_{T_1},  \sigma_b\rangle_{e\cap\partial T_1}+ \langle\bF_h|_{\pT_2} \cdot \bn_{T_2}, \sigma_b\rangle_{e\cap\partial T_2},
\end{split}
\end{equation*}
where we have used (\ref{disgradient}), $\bn_{T_1}$ and $\bn_{T_2}$ are the unit outward normal directions along $e=\partial T_1\cap \partial T_2$ pointing exterior to $T_1$ and $T_2$, respectively.  This shows that
$$
\bF_h|_{\pT_1} \cdot \bn_{T_1} + \bF_h|_{\pT_2} \cdot \bn_{T_2} = 0\qquad \mbox{on } e=\pT_1\cap\pT_2,
$$
 and hence the continuity of the numerical flux along the normal direction on each interior edge or flat face.

The result can be summarized as follows.

\begin{theorem}\label{THM:conservation} Let $(u_h;\lambda_h)$ be the numerical solution of the linear convection model problem \eqref{model} arising from the primal-dual weak Galerkin finite element method (\ref{32})-(\ref{2}). Define a new numerical approximation and a numerical flux function as follows:
\begin{eqnarray*}
\tilde u_h &:=& u_h + \tau (\bbeta\cdot\nabla\lambda_0 - c \lambda_0)\qquad \mbox{in} \ T,  \ T\in \T_h,\\
\textbf{F}_h|_\pT&:=& \bbeta u_h -h_T^{-1}(\lambda_0-\lambda_b)\bn, \quad\mbox{on } \pT, \ T\in \T_h.
\end{eqnarray*}
Then, the flux approximation $\textbf{F}_h$ is continuous across each interior edge or flat face in the normal direction, and the following conservation property is satisfied:
\begin{equation}\label{mas-discrete}
\int_{\partial T}\textbf{F}_h \cdot \bn ds+\int_T c\tilde u_h dT=\int_T fdT.
\end{equation}
\end{theorem}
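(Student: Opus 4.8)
The statement records two facts that are dictated by the very definitions of $\tilde u_h$ and $\bF_h$, so the plan is to verify each by inserting a carefully localized, admissible test function into the first PD-WG equation (\ref{32}) and regrouping; no part of the second equation (\ref{2}) is needed. Throughout, the one nonroutine ingredient is that the weak-gradient pairing can be evaluated \emph{exactly}: since $\bbeta$ is piecewise constant and $u_h\in M_{k-1,h}$, the vector $\bbeta u_h$ belongs to $[P_{k-1}(T)]^d=[P_r(T)]^d$ and hence is an admissible test vector in the defining relation (\ref{disgradient}). This turns $(u_h,\bbeta\cdot\nabla_w\sigma)_T=(\nabla_w\sigma,\bbeta u_h)_T$ into genuine interior and boundary integrals with no consistency error.

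For the conservation identity (\ref{mas-discrete}) on a fixed $T\in\T_h$, I would take $\sigma=\{\sigma_0,\sigma_b\}$ with $\sigma_0\equiv 1$ on $T$, $\sigma_0\equiv 0$ on every other element, and $\sigma_b\equiv 0$ on all edges/faces; this lies in $W_{j,h}^{0,\Gamma_+}$ and kills the inflow data term on the right of (\ref{32}). Expanding $s_T$ from (\ref{stabilizer-local}) and $b_T$ from (\ref{b-form-local}), applying (\ref{disgradient}) with $\boldsymbol{\psi}=\bbeta u_h$, and then the divergence theorem $(\nabla\cdot(\bbeta u_h),1)_T=\langle\bbeta u_h\cdot\bn,1\rangle_{\pT}$, the single equation collapses into boundary contributions $h_T^{-1}\langle\lambda_b-\lambda_0,1\rangle_{\pT}+\langle\bbeta u_h\cdot\bn,1\rangle_{\pT}$ and interior contributions $(cu_h,1)_T+\tau(\bbeta\cdot\nabla\lambda_0-c\lambda_0,c)_T$. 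Using $\bn\cdot\bn=1$, the boundary part is exactly $\langle\bF_h\cdot\bn,1\rangle_{\pT}$, and the interior part is exactly $(c\tilde u_h,1)_T$ by the definition of $\tilde u_h$; the two together give $\int_{\pT}\bF_h\cdot\bn\,ds+\int_T c\tilde u_h\,dT=\int_T f\,dT$.

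For the normal continuity of $\bF_h$, I would fix an interior edge or face $e=\pT_1\cap\pT_2$ and take $\sigma=\{\sigma_0=0,\sigma_b\}$ with $\sigma_b$ arbitrary on $e$ and $\sigma_b\equiv 0$ elsewhere (admissible, and the $\Gamma_-$ data term vanishes since $e$ is interior). Because $\sigma_0\equiv 0$, the volume part of $s_T$ and the $c\sigma_0$ part of $b_T$ drop out, and by (\ref{disgradient}) with $\boldsymbol{\psi}=\bbeta u_h$ only the boundary term $\langle\sigma_b,\bbeta u_h\cdot\bn\rangle$ survives; the whole of (\ref{32}) reduces to contributions from $T_1$ and $T_2$ along $e$. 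Regrouping each element's jump and flux terms (again via $\bn\cdot\bn=1$) rewrites the equation as $\langle\bF_h|_{\pT_1}\cdot\bn_{T_1}+\bF_h|_{\pT_2}\cdot\bn_{T_2},\sigma_b\rangle_e=0$; since $\sigma_b$ is arbitrary on $e$, the usual density argument forces $\bF_h|_{\pT_1}\cdot\bn_{T_1}+\bF_h|_{\pT_2}\cdot\bn_{T_2}=0$ on $e$.

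The step I expect to carry the real weight is the exact evaluation of $(\nabla_w\sigma,\bbeta u_h)_T$ through (\ref{disgradient}): it is what forces the definitions of $\tilde u_h$ and $\bF_h$ and makes the reassembly clean, and it is precisely where the piecewise-constant hypothesis on $\bbeta$ is used (for piecewise-smooth $\bbeta$ one would instead pick $\boldsymbol{\psi}$ as a polynomial projection of $\bbeta u_h$ and track the resulting consistency terms). The remaining work is bookkeeping: confirming admissibility of each test function in $W_{j,h}^{0,\Gamma_+}$, keeping the sign in the divergence theorem, and using $\bn\cdot\bn=1$ to fold the scalar stabilizer jumps into normal-flux terms.
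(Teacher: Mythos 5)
Your proposal is correct and follows essentially the same route as the paper: the same localized test functions $\{\sigma_0\equiv 1 \mbox{ on } T,\ \sigma_b\equiv 0\}$ and $\{\sigma_0\equiv 0,\ \sigma_b \mbox{ supported on } e\}$ inserted into (\ref{32}), with the weak-gradient definition (\ref{disgradient}) evaluated at $\boldsymbol{\psi}=\bbeta u_h$ and the divergence theorem doing the regrouping. Your explicit observation that $\bbeta u_h\in[P_{k-1}(T)]^d$ is an admissible test vector (so the pairing is exact) is a point the paper leaves implicit; just note that the final "arbitrariness of $\sigma_b$" step is a finite-dimensional orthogonality argument rather than a density argument, valid because the flux jump itself lies in $P_j(e)$ when $j\ge k-1$ and $\bbeta$ is piecewise constant.
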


\section{Error Equations}\label{Section:EE}
Let $u$ and $(u_h; \lambda_h) \in M_{k-1,h}\times W_{j, h}^{0, \Gamma_+}$ be the exact solution of (\ref{model}) and the numerical solution arising from the primal-dual weak Galerkin scheme (\ref{32})-(\ref{2}), respectively. Note that the exact solution  of the Lagrangian multiplier is $\lambda=0$. The error functions for the primal variable $u$ and the dual variable $\lambda$ are thus given by
\begin{align*}
%\label{error}
e_h&=u_h-{\cal Q} _hu,
\\
\epsilon_h&=\lambda_h-Q_h\lambda=\lambda_h.
%\label{error-2}
\end{align*}

\begin{lemma}\label{errorequa}
Let $u$ and $(u_h; \lambda_h) \in M_{k-1,h}\times W_{j, h}^{0, \Gamma_+}$ be the exact solution of (\ref{model}) and the numerical solution arising from the primal-dual weak Galerkin scheme (\ref{32})-(\ref{2}), respectively. Then, the error functions $e_h$ and $\epsilon_h$ satisfy the following equations:
\begin{eqnarray}\label{sehv}
s( \epsilon_h , \sigma)+b(e_h, \sigma )&=&\ell_u(\sigma)
,\qquad \forall\sigma\in W^{0, \Gamma_+}_{j,h},\\
b(v, \epsilon_h )&=&0,\qquad\qquad \forall v\in M_{k-1,h}, \label{sehv2}
\end{eqnarray}
where
\begin{equation}\label{lu}
\qquad \ell_u(\sigma) = \sum_{T\in {\cal T}_h}\langle \bbeta (u-{\cal Q}_h u)  \cdot \bn ,\sigma_b-\sigma_0 \rangle_{\partial T} +   ({\cal Q}_hu-u, c\sigma_0 )_T.
\end{equation}
\end{lemma}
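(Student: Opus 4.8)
The plan is to derive both identities directly from the scheme (\ref{32})--(\ref{2}) by substituting the projected exact solution. The second equation is immediate: since the exact multiplier is $\lambda=0$, we have $\epsilon_h=\lambda_h$, so $b(v,\epsilon_h)=b(v,\lambda_h)=0$ is exactly (\ref{2}). For the first equation I would write $b(e_h,\sigma)=b(u_h,\sigma)-b(\mathcal{Q}_hu,\sigma)$ and use $s(\epsilon_h,\sigma)=s(\lambda_h,\sigma)$, so that the first line of (\ref{32}) gives
\[
s(\epsilon_h,\sigma)+b(e_h,\sigma)=\sum_{e\subset\Gamma_-}\langle\sigma_b,\bbeta\cdot\bn g\rangle_e-(f,\sigma_0)-b(\mathcal{Q}_hu,\sigma).
\]
Everything then reduces to showing that $b(\mathcal{Q}_hu,\sigma)$ equals the right-hand side data minus $\ell_u(\sigma)$.

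The next step is to expand $b(\mathcal{Q}_hu,\sigma)=\sum_T(\mathcal{Q}_hu,\bbeta\cdot\nabla_w\sigma-c\sigma_0)_T$. Because $\bbeta$ is piecewise constant, $\bbeta\mathcal{Q}_hu\in[P_{k-1}(T)]^d$ is an admissible test vector in the definition (\ref{disgradient}) of the discrete weak gradient (recall $r=k-1$); applying it and then integrating by parts on the element yields
\[
b(\mathcal{Q}_hu,\sigma)=\sum_{T\in\mathcal{T}_h}\big[(\bbeta\cdot\nabla\sigma_0,\mathcal{Q}_hu)_T+\langle\sigma_b-\sigma_0,\bbeta\mathcal{Q}_hu\cdot\bn\rangle_{\partial T}-(\mathcal{Q}_hu,c\sigma_0)_T\big].
\]
I would then introduce the exact solution $u$ through $\mathcal{Q}_hu=u-(u-\mathcal{Q}_hu)$. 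In the volume gradient term, $\mathcal{Q}_hu$ may be replaced by $u$ exactly, since $\bbeta\cdot\nabla\sigma_0\in P_{k-1}(T)$ for both $j=k-1$ and $j=k$ (again using that $\bbeta$ is piecewise constant) and $\mathcal{Q}_h$ is the $L^2$ projection onto $M_{k-1,h}$, so no remainder is produced here. In the boundary and reaction terms the differences $u-\mathcal{Q}_hu$ are retained, and after matching signs they assemble precisely into $-\ell_u(\sigma)$ as given in (\ref{lu}).

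What remains is the ``exact'' part $I=\sum_T[(\bbeta\cdot\nabla\sigma_0,u)_T+\langle\sigma_b-\sigma_0,\bbeta u\cdot\bn\rangle_{\partial T}-(u,c\sigma_0)_T]$. Integrating the gradient term by parts once more converts it into $\sum_T[-(\sigma_0,\nabla\cdot(\bbeta u)+cu)_T+\langle\sigma_b,\bbeta u\cdot\bn\rangle_{\partial T}]$, whereupon the strong equation $\nabla\cdot(\bbeta u)+cu=f$ collapses the volume contribution to $-(f,\sigma_0)$. The main obstacle is the boundary sum $\sum_T\langle\sigma_b,\bbeta u\cdot\bn\rangle_{\partial T}$: here I would invoke that the exact flux $\bbeta u$ lies in $H(\mathrm{div},\Omega)$ (because $\nabla\cdot(\bbeta u)=f-cu\in L^2(\Omega)$), so its normal component is single-valued across interior faces, while $\sigma_b$ is single-valued there by construction of $W_{j,h}$; the interior-face contributions therefore cancel in pairs. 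On $\partial\Omega$ the sum splits, with $\sigma_b=0$ on $\Gamma_+$ annihilating that portion and $u=g$ on $\Gamma_-$ leaving exactly $\sum_{e\subset\Gamma_-}\langle\sigma_b,\bbeta\cdot\bn g\rangle_e$. Collecting these gives $b(\mathcal{Q}_hu,\sigma)=\sum_{e\subset\Gamma_-}\langle\sigma_b,\bbeta\cdot\bn g\rangle_e-(f,\sigma_0)-\ell_u(\sigma)$, which, substituted above, produces (\ref{sehv}). I expect the flux-continuity cancellation on interior faces, together with careful bookkeeping of which projection differences survive for the two admissible values of $j$, to be the only delicate points in the argument.
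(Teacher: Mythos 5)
Your proposal is correct and follows essentially the same route as the paper's own proof: the same subtraction of $b(\mathcal{Q}_hu,\sigma)$ from (\ref{32}), the same use of $\bbeta\mathcal{Q}_hu\in[P_{k-1}(T)]^d$ as an admissible test vector in (\ref{disgradient*}), the same projection identity replacing $\mathcal{Q}_hu$ by $u$ in the volume gradient term, and the same integration by parts with interior-face cancellation and the boundary conditions. The only differences are cosmetic bookkeeping (you peel off the $u-\mathcal{Q}_hu$ remainders early as $-\ell_u(\sigma)$, while the paper carries all terms to the end) and that you make explicit the $H(\mathrm{div})$ justification for the single-valuedness of $\bbeta u\cdot\bn$, which the paper uses implicitly.
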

\begin{proof}
From (\ref{2}) we have
\begin{align*}
b(v, \epsilon_h) = 0,\qquad\forall v\in M_{k-1,h},
\end{align*}
which gives rise to the equation (\ref{sehv2}).

Next, by subtracting $b( {\cal Q}_hu, \sigma)$ from both sides of (\ref{32}) we arrive at
\begin{equation*}
\begin{split}
& s( \lambda_h-Q_h\lambda, \sigma)+b(u_h-{\cal Q}_hu, \sigma)  \\
=& -(f,\sigma_0)-b(  {\cal Q}_hu, \sigma)+\sum_{e\subset \Gamma_-}\langle \sigma_b, \bbeta \cdot\bn g \rangle_{e}\\
 =&  -(f,\sigma_0) -\sum_{T\in {\cal T}_h}  ({\cal Q}_hu, \bbeta \cdot \nabla_w \sigma-c\sigma_0)_T+\sum_{e\subset \Gamma_-}\langle \sigma_b, \bbeta \cdot\bn g \rangle_{e}\\
                     =&  -(f,\sigma_0)+\sum_{T\in {\cal T}_h}- (  \bbeta{\cal Q}_hu, \nabla \sigma_0)_T+\langle \bbeta {\cal Q}_hu \cdot \bn,  \sigma_0 -\sigma_b\rangle_{\partial T} \\
                     &+ ({\cal Q}_hu, c\sigma_0)_T +\sum_{e\subset \Gamma_-}\langle \sigma_b, \bbeta \cdot\bn g \rangle_{e}\\
                                      =&  -(f,\sigma_0)+ \sum_{T\in {\cal T}_h}-(   \bbeta u, \nabla \sigma_0)_T+\langle \bbeta {\cal Q}_hu \cdot \bn,  \sigma_0 -\sigma_b\rangle_{\partial T} \\
                                      &+ ( {\cal Q}_hu, c\sigma_0)_T +\sum_{e\subset \Gamma_-}\langle \sigma_b, \bbeta \cdot\bn g \rangle_{e}\\
                                      =&  -(f,\sigma_0)+ \sum_{T\in {\cal T}_h}(  \nabla  \cdot ( \bbeta u)+c{\cal Q}_hu, \sigma_0)_T- \langle   \bbeta u  \cdot \bn,  \sigma_0 -\sigma_b\rangle_{\partial T}\\& +\langle \bbeta {\cal Q}_hu \cdot \bn,  \sigma_0 -\sigma_b\rangle_{\partial T}
                                      +\sum_{e\subset \Gamma_-} \{-\langle   \bbeta u  \cdot \bn,   \sigma_b\rangle_{e} +\langle \sigma_b, \bbeta \cdot\bn g \rangle_{e} \}\\
                                      =&\sum_{T\in {\cal T}_h} \langle  \bbeta(u- {\cal Q}_h u) \cdot \bn,  \sigma_b -\sigma_0\rangle_{\partial T} +  ({\cal Q}_hu-u, c\sigma_0 )_T,
  \end{split}
\end{equation*}
where we have used (\ref{disgradient*}), the usual integration by parts, and the facts that $ \nabla  \cdot (\bbeta u)+cu=f$, $u=g$ on $\Gamma_-$, and $\sigma_b=0$ on $\Gamma_+$. This completes the proof of the lemma.
\end{proof}

\section{Some Technical Estimates}\label{Section:TechEst}

Recall that ${\cal T}_h$ is a shape-regular finite element partition of the domain $\Omega$. For any $T\in {\cal T}_h$ and $\phi\in H^1(T)$,
 the following trace inequality holds true \cite{wy3655}:
\begin{equation}\label{tracein}
 \|\phi\|^2_{\partial T} \leq C(h_T^{-1}\|\phi\|_T^2+h_T \|\nabla \phi\|_T^2).
\end{equation}
If $\phi$ is a polynomial on the element $T\in {\cal T}_h$, then from the inverse inequality we have \cite{wy3655},
\begin{equation}\label{trace}
 \|\phi\|^2_{\partial T} \leq Ch_T^{-1}\|\phi\|_T^2.
\end{equation}

The following defines a semi-norm in the finite element space $M_{k-1,h}$:
\begin{equation}\label{mhnorm}
\3bar v\3bar _{M_h}= \Big( \sum_{T\in {\cal T}_h}  h_T^2\| \nabla\cdot (\bbeta v)+cv\|_T^2+   \sum_{e \subset {\cal E}_h\setminus \Gamma_+} h_T\|\ljump \bbeta v\cdot\bn \rjump\|_e^2 \Big)^{\frac{1}{2}}.
\end{equation}

\begin{lemma}\label{mhnormproof} Assume that the solution of the linear transport problem \eqref{model} is unique.
Then, the seminorm  $\3bar \cdot \3bar_{M_h}$  given in (\ref{mhnorm}) defines  a norm in the linear space $M_h$.
\end{lemma}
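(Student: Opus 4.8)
The plan is to verify the single nontrivial norm axiom. Since $\3bar\cdot\3bar_{M_h}$ is assembled entirely from $L^2$-type quantities, absolute homogeneity, the triangle inequality, and nonnegativity are immediate from the corresponding properties of the $L^2(T)$ and $L^2(e)$ norms. Hence it remains only to establish positive definiteness: if $v\in M_{k-1,h}$ satisfies $\3bar v\3bar_{M_h}=0$, then $v\equiv 0$. This reduces the lemma to essentially the same computation that produced $u_h\equiv 0$ in the proof of Theorem~\ref{thmunique1}.

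First I would assume $\3bar v\3bar_{M_h}=0$. Because both sums in (\ref{mhnorm}) are nonnegative, every summand must vanish, which gives the elementwise and edgewise conditions
\[
\nabla\cdot(\bbeta v)+cv=0 \ \text{ on every } T\in\T_h,
\qquad
\ljump \bbeta v\cdot\bn\rjump=0 \ \text{ on every } e\subset \E_h\setminus\Gamma_+ .
\]
The second condition splits according to the decomposition $\E_h\setminus\Gamma_+=\E_h^0\cup\Gamma_-$: across every interior face the normal component of $\bbeta v$ has no jump, while on the inflow boundary $\Gamma_-$ the one-sided trace satisfies $\bbeta v\cdot\bn=0$.

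Next I would upgrade these local relations to a global statement. Continuity of the normal component across all interior faces is precisely the condition for the piecewise-smooth field $\bbeta v$ to belong to $H(\mathrm{div};\Omega)$: testing against an arbitrary $\phi\in C_0^\infty(\Omega)$ and summing the elementwise integration-by-parts identities, the interior-face contributions cancel exactly because the jumps vanish, so the distributional divergence of $\bbeta v$ coincides with its piecewise divergence. Combined with $\nabla\cdot(\bbeta v)+cv=0$ on each $T$, this yields $\nabla\cdot(\bbeta v)+cv=0$ in $L^2(\Omega)$. Moreover, since $\bbeta\cdot\bn<0$ on $\Gamma_-$, the condition $\bbeta v\cdot\bn=0$ there forces $v=0$ on $\Gamma_-$. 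Thus $v$ solves the homogeneous transport problem (\ref{model}) with $f=0$ and $g=0$.

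Finally, invoking the assumed uniqueness of solutions to (\ref{model}) gives $v\equiv 0$ in $\Omega$, which establishes positive definiteness and completes the proof. The only step requiring care is the passage from the per-element identities to the global equation: one must justify that the vanishing normal jumps genuinely eliminate all interface terms, so that $\bbeta v\in H(\mathrm{div};\Omega)$ and the divergence may be interpreted over all of $\Omega$. Everything else is a direct transcription of the uniqueness argument already carried out for $u_h$ in Theorem~\ref{thmunique1}.
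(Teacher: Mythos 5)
Your proof is correct and follows essentially the same route as the paper: extract the elementwise identity $\nabla\cdot(\bbeta v)+cv=0$, the vanishing interior jumps, and $\bbeta v\cdot\bn=0$ on $\Gamma_-$ from $\3bar v\3bar_{M_h}=0$, glue these into the homogeneous problem \eqref{model} with $f=0$, $g=0$, and conclude $v\equiv 0$ from the uniqueness assumption. The only difference is that you explicitly justify the gluing step (showing the distributional divergence of $\bbeta v$ agrees with its piecewise divergence once the normal jumps vanish), which the paper states without proof; this is a welcome addition rather than a departure.
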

\begin{proof}
We shall only verify the positivity property for  $\3bar \cdot  \3bar _{M_{h}}$. To this end, we assume that  $\3bar v\3bar_{M_h}=0$ for some $v\in M_{k-1,h}$.  It follows from (\ref{mhnorm}) that $\nabla\cdot ( \bbeta v )+cv=0$ on each $T\in {\cal T}_h$,  $\ljump  \bbeta v \cdot \bn \rjump_e=0$ on each $e\in {\cal E}_h^0$ and $ \bbeta v \cdot \bn=0$ on each $e\subset \Gamma_-$. Therefore, we obtain $\nabla\cdot ( \bbeta v )+cv=0$ in $\Omega$ and $v=0$ on $\Gamma_-$, which gives $v\equiv 0$ in $\Omega$ from the uniqueness assumption for the solution of the linear convection problem \eqref{model}. This completes the proof of the lemma.
\end{proof}

We further introduce a semi-norm in the weak finite element space $W_{j,h}$ as follows. For any $\lambda=\{\lambda_0, \lambda_b\} \in W_{j,h}$, define
\begin{equation}\label{whnorm}
\3bar \lambda \3bar_{W_h}= \Big(\sum_{T\in {\cal T}_h} h_T^{-1} \| \lambda_0-\lambda _b \|_{\partial T}^2+ \tau   \| \bbeta\cdot\nabla\lambda_0-c\lambda_0 \|_T^2\Big)^{\frac{1}{2}},
\end{equation}
with $\tau \ge 0$ being the parameter in the stabilizer $s_T(\cdot,\cdot)$ given in \eqref{stabilizer-local}. It is readily seen that
\begin{equation}\label{triple-bar-one}
\3bar \lambda \3bar_{W_h}^2 = s(\lambda,\lambda).
\end{equation}

\begin{lemma} [inf-sup condition] \label{lem3} For $j=k-1$ and $j=k$, there exists a constant $C>0$, such that for any $v\in M_{k-1,h}$ there exists $\tilde \sigma_v\in W_{j, h}^{0, \Gamma_+}$ satisfying
\begin{equation}\label{inf-sup-new}
|b(v,\tilde\sigma_v)| = \3bar v \3bar_{M_h}^2, \quad \3bar \tilde \sigma_v\3bar_{W_h} \leq C \3bar v \3bar_{M_h}.
%\sup_{\sigma \in W_h^{0, \Gamma_+}} \frac{b(v,\sigma)}{\3bar \sigma\3bar_{W_h}}\geq C\3bar v \3bar_{M_h}.
\end{equation}
\end{lemma}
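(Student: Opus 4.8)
The plan is to construct the test function $\tilde\sigma_v$ explicitly from $v$, mirroring the construction already used in the uniqueness proof of Theorem \ref{thmunique1}. Recall that in that proof, the choice $\sigma_0=-h_T^2(\nabla\cdot(\bbeta u_h)+cu_h)$ on each $T$ together with $\sigma_b=h_T\ljump\bbeta u_h\cdot\bn\rjump$ on each interior/inflow face converted $b(u_h,\sigma)$ into a sum of squares. I would take the analogous definition here: set
\begin{equation*}
\tilde\sigma_0 = -h_T^2\big(\nabla\cdot(\bbeta v)+cv\big)\quad\text{on each } T\in\T_h, \qquad \tilde\sigma_b = h_T\ljump\bbeta v\cdot\bn\rjump \quad\text{on each } e\subset\E_h\setminus\Gamma_+,
\end{equation*}
and $\tilde\sigma_b=0$ on $\Gamma_+$, so that $\tilde\sigma_v=\{\tilde\sigma_0,\tilde\sigma_b\}\in W_{j,h}^{0,\Gamma_+}$. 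The admissibility for both $j=k-1$ and $j=k$ follows because $\nabla\cdot(\bbeta v)+cv\in P_{k-1}(T)\subset P_j(T)$ and the jump term is a polynomial of degree $k-1\le j$ on each face.

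\emph{First I would establish the identity $b(v,\tilde\sigma_v)=\3bar v\3bar_{M_h}^2$.} Using integration by parts on $b(v,\tilde\sigma_v)=\sum_T(v,\bbeta\cdot\nabla_w\tilde\sigma_v-c\tilde\sigma_0)_T$ exactly as in \eqref{eee}, the $b$-form rewrites as $-\sum_T(\tilde\sigma_0,\nabla\cdot(\bbeta v)+cv)_T+\sum_{e\subset\E_h\setminus\Gamma_+}\langle\tilde\sigma_b,\ljump\bbeta v\cdot\bn\rjump\rangle_e$. Substituting the definitions of $\tilde\sigma_0$ and $\tilde\sigma_b$ collapses this precisely to $\sum_T h_T^2\|\nabla\cdot(\bbeta v)+cv\|_T^2+\sum_{e\subset\E_h\setminus\Gamma_+}h_T\|\ljump\bbeta v\cdot\bn\rjump\|_e^2=\3bar v\3bar_{M_h}^2$, which is nonnegative; taking absolute values gives the first assertion in \eqref{inf-sup-new}.

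\emph{The main obstacle is the stability bound $\3bar\tilde\sigma_v\3bar_{W_h}\le C\3bar v\3bar_{M_h}$}, since $\3bar\cdot\3bar_{W_h}$ in \eqref{whnorm} involves $h_T^{-1}\|\tilde\sigma_0-\tilde\sigma_b\|_{\partial T}^2$ and $\tau\|\bbeta\cdot\nabla\tilde\sigma_0-c\tilde\sigma_0\|_T^2$, neither of which appears directly in $\3bar v\3bar_{M_h}$. For the boundary term I would use the triangle inequality $\|\tilde\sigma_0-\tilde\sigma_b\|_{\partial T}\le\|\tilde\sigma_0\|_{\partial T}+\|\tilde\sigma_b\|_{\partial T}$; the trace-inverse inequality \eqref{trace} gives $\|\tilde\sigma_0\|_{\partial T}^2\le Ch_T^{-1}\|\tilde\sigma_0\|_T^2=Ch_T^{3}\|\nabla\cdot(\bbeta v)+cv\|_T^2$, so $h_T^{-1}\|\tilde\sigma_0\|_{\partial T}^2\le Ch_T^2\|\nabla\cdot(\bbeta v)+cv\|_T^2$, matching the first part of $\3bar v\3bar_{M_h}^2$; the face term $\tilde\sigma_b=h_T\ljump\bbeta v\cdot\bn\rjump$ gives $h_T^{-1}\|\tilde\sigma_b\|_e^2=h_T\|\ljump\bbeta v\cdot\bn\rjump\|_e^2$, matching the second part. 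For the $\tau$-term I would apply the inverse inequality $\|\bbeta\cdot\nabla\tilde\sigma_0\|_T\le Ch_T^{-1}\|\tilde\sigma_0\|_T$ (using $\bbeta\in[L^\infty]^d$ piecewise constant) together with $\|c\tilde\sigma_0\|_T\le C\|\tilde\sigma_0\|_T$, yielding $\tau\|\bbeta\cdot\nabla\tilde\sigma_0-c\tilde\sigma_0\|_T^2\le C\tau h_T^{-2}\|\tilde\sigma_0\|_T^2=C\tau h_T^2\|\nabla\cdot(\bbeta v)+cv\|_T^2$, again controlled by $\3bar v\3bar_{M_h}^2$ (the dependence on $\tau$ being absorbed into $C$ for fixed $\tau$, or bounded uniformly if $\tau\lesssim 1$). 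Summing over all elements and faces and using shape-regularity to relate neighboring meshsizes $h_{T_1}\simeq h_{T_2}$ across shared faces then delivers $\3bar\tilde\sigma_v\3bar_{W_h}^2\le C\3bar v\3bar_{M_h}^2$, completing the proof.
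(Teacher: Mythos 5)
Your proof is correct and follows essentially the same approach as the paper's: the identical explicit construction $\tilde\sigma_0=-h_T^2(\nabla\cdot(\bbeta v)+cv)$, $\tilde\sigma_b=h_T\ljump\bbeta v\cdot\bn\rjump$, the same integration-by-parts identity yielding $b(v,\tilde\sigma_v)=\3bar v\3bar_{M_h}^2$, and the same trace/inverse inequality estimates for the stability bound $\3bar\tilde\sigma_v\3bar_{W_h}\le C\3bar v\3bar_{M_h}$. The details you add beyond the paper (polynomial-degree admissibility of $\tilde\sigma_v$ for $j=k-1,k$, shape regularity across shared faces, and the explicit absorption of $\tau$ into the constant) are all handled correctly.
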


\begin{proof}
For any $v\in M_{k-1,h}$ and $\sigma\in W_{j,h}^{0, \Gamma_+}$, from the definition of the weak gradient (\ref{disgradient}) we have
\begin{equation*}
\begin{split}
b(v,\sigma)=&\sum_{T\in {\cal T}_h} (v, \bbeta \cdot \nabla_w \sigma-c\sigma_0)_T\\
=&\sum_{T\in {\cal T}_h}-(\nabla \cdot ( \bbeta  v), \sigma_0)_T+\langle  \bbeta v\cdot \bn, \sigma_b\rangle_{\partial T}-(v, c\sigma_0)_T\\
=&- \sum_{T\in {\cal T}_h} (\nabla \cdot (  \bbeta  v)+cv, \sigma_0)_T+\sum_{e\subset {\cal E}_h\setminus\Gamma_+} \langle \ljump  \bbeta v \cdot \bn\rjump, \sigma_b\rangle _{\partial T},\\
\end{split}
\end{equation*}
where we have used $\sigma_b=0$ on $\Gamma_+$ in the last line. By setting $\tilde\sigma_v=\{\tilde\sigma_0; \tilde\sigma_b \}$ where $\tilde\sigma_0=-h_T^2 (\nabla \cdot (\bbeta  v)+cv)$ on each $T$ and
$ \tilde\sigma_b= h_T \ljump  \bbeta v \cdot \bn\rjump$ on each $e\subset {\cal E}_h\setminus\Gamma_+$, we have
\begin{equation}\label{sig1}
b(v,\tilde\sigma_v)  = \3bar v \3bar_{M_h}^2.
\end{equation}
Now using the triangle inequality and the trace inequality (\ref{trace}) we obtain
\begin{equation}\label{nor1}
\begin{split}
&h_T^{-1} \| \tilde\sigma_0-\tilde\sigma_b \|^2_{\partial T} \\
=& h_T^{-1} \| -h_T^2 (\nabla \cdot (\bbeta  v)+cv)-h_T \ljump  \bbeta v \cdot \bn\rjump\|^2_{\partial T} \\
 \leq & Ch_T^{-1} h_T^{4}h_T^{-1}\|  \nabla \cdot ( \bbeta  v ) +cv\|^2_T+C h_T^{-1}h_T^2\| \ljump  \bbeta v  \cdot \bn\rjump \|^2_{\partial T}\\
 \leq & Ch_T^2 \|  \nabla \cdot (\bbeta  v ) +cv\|^2_T+C h_T \| \ljump  \bbeta v  \cdot \bn\rjump \|^2_{\partial T}.
\end{split}
\end{equation}
Moreover, we have from the inverse inequality that
\begin{equation}\label{nor2}
\begin{split}
\quad \tau  \|\bbeta\cdot\nabla \tilde\sigma_0-c\tilde\sigma_0\|_T^2&\leq 2 \tau \left( \|\bbeta\cdot\nabla \tilde\sigma_0\|_T^2 + \|c\tilde\sigma_0\|_T^2\right)\\
&\leq C \tau \left(h_T^{-2}  \|\tilde\sigma_0\|_T^2 + \|c\tilde\sigma_0\|_T^2\right)\\
& \leq C \tau h_T^{-2} \|h_T^2 (\nabla \cdot (  \bbeta  v)+cv)\|_T^2 \\
&\leq C h^{2}_T \|  \nabla\cdot  (\bbeta  v)+cv \|_T^2.
\end{split}
\end{equation}
It follows from (\ref{nor1}) and (\ref{nor2}) that
\begin{equation}\label{sig}
\3bar \tilde\sigma_v\3bar_{W_h} \leq C \3bar v \3bar_{M_h}.
\end{equation}
Thus, combining (\ref{sig1}) and  (\ref{sig}) gives the inf-sup condition \eqref{inf-sup-new}. This completes the proof of the lemma.
\end{proof}

\section{Error Estimates}\label{Section:ErrorEstimate}
The goal of this section is to derive some error estimates for the solution of the primal-dual weak Galerkin algorithm (\ref{32})-(\ref{2}) by using the error equations (\ref{sehv})-(\ref{sehv2}).

\begin{lemma}\cite{wy3655}
Let ${\cal Q}_h$ be the $L^2$ projection operator onto the finite element space $M_{k-1, h}$ subordinated to the shape-regular finite element partition ${\cal T}_h$ for the domain $\Omega$. For any $0\leq s\leq 1$, $s\leq m \leq k$, $s-1 \leq n \leq k$, there holds
\begin{equation}\label{error2}
\sum_{T\in {\cal T}_h}h_T^{2s}\|u-{\cal Q}_hu\|^2_{s,T}\leq C h^{2m}\|u\|^2_{m},\end{equation}
\begin{equation}\label{errorQ0}
\sum_{T\in {\cal T}_h}h_T^{2s}\|u- Q_0u\|^2_{s,T}\leq C h^{2n+2}\|u\|^2_{n+1}.\end{equation}
\end{lemma}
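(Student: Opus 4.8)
The result is a standard polynomial approximation estimate, established in \cite{wy3655} via the Bramble--Hilbert lemma together with an affine scaling argument, and I would reconstruct the proof along those lines. The plan is first to reduce each global bound to an elementwise estimate of the form
\begin{equation*}
\|u-{\cal Q}_hu\|_{s,T}\le Ch_T^{m-s}\,|u|_{m,T},\qquad \|u-Q_0u\|_{s,T}\le Ch_T^{n+1-s}\,|u|_{n+1,T},
\end{equation*}
and then to square, multiply by $h_T^{2s}$, and sum over $T\in{\cal T}_h$. Once the elementwise bounds are in hand the global statements follow at once, since $\sum_{T}h_T^{2s}\|u-{\cal Q}_hu\|_{s,T}^2\le C\sum_T h_T^{2m}|u|_{m,T}^2\le Ch^{2m}|u|_m^2\le Ch^{2m}\|u\|_m^2$ (and analogously for $Q_0$), using $h_T\le h$ and the additivity of the Sobolev seminorm over the partition.

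For the elementwise bounds I would work on a fixed reference element $\hat T$ affinely equivalent to $T$. On $\hat T$ the $L^2$ projection onto $P_{k-1}(\hat T)$ (resp. $P_j(\hat T)$) reproduces every polynomial of degree $\le k-1$ (resp. $\le j$), so for any $q$ in that polynomial space one has $\hat u-\hat{\cal Q}\hat u=(\hat u-q)-\hat{\cal Q}(\hat u-q)$; boundedness of the $L^2$ projection in $H^s(\hat T)$ then yields $\|\hat u-\hat{\cal Q}\hat u\|_{s,\hat T}\le C\inf_{q}\|\hat u-q\|_{s,\hat T}$, and the Deny--Lions/Bramble--Hilbert lemma bounds this infimum by $C|\hat u|_{m,\hat T}$ whenever $s\le m\le k$ (resp. $s\le n+1$, with $n+1$ capped by $j+1$). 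Mapping back to $T$ by the affine change of variables and invoking shape regularity to control the Jacobian converts the reference-element quantities into powers of $h_T$ times the physical seminorms, producing the factors $h_T^{m-s}$ (resp. $h_T^{n+1-s}$) claimed above.

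The main technical care lies in the scaling step: one must track the exact powers of $h_T$ that arise when the (possibly fractional, since $0\le s\le1$) $s$-th order norm and the $m$-th order seminorm are transformed under the affine map, and verify that shape regularity renders all constants uniform over $T\in{\cal T}_h$. For fractional $s$ this is cleanest handled either by interpolation between the integer cases $s=0$ and $s=1$ or by carrying the Sobolev--Slobodeckij seminorm directly through the change of variables. The only other point requiring attention is matching the admissible ranges of $m$ and $n$ to the degrees of the two projection spaces, which is exactly why the lower bounds $s\le m$ and $s-1\le n$ (needed for Bramble--Hilbert to apply) and the upper caps at $k$ (from $P_{k-1}$) appear in the hypotheses; none of this is genuinely difficult, so the lemma is best regarded as a direct citation of \cite{wy3655} with the scaling bookkeeping supplied as above.
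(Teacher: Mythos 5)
Your reconstruction is correct, and it is essentially the argument behind the cited result: the paper itself offers no proof of this lemma (it is quoted directly from \cite{wy3655}), and the standard proof there proceeds exactly as you describe, via polynomial reproduction, boundedness of the $L^2$ projection on the reference element, the Bramble--Hilbert lemma, and an affine scaling argument with shape regularity giving uniform constants. Your attention to the fractional-$s$ scaling and to the degree caps ($m\le k$ from $P_{k-1}$, $n$ capped by the degree $j$ of the space defining $Q_0$) covers the only points where bookkeeping could go wrong, so the proposal stands as a faithful reconstruction of the omitted proof.
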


\begin{theorem} \label{theoestimate} Let $u$ and $(u_h; \lambda_h) \in M_{k-1,h}\times W_{j, h}^{0, \Gamma_+}$ be the exact solution of the linear convection problem (\ref{model}) and its numerical approximation arising from the primal-dual weak Galerkin scheme (\ref{32})-(\ref{2}).
Assume that the solution $u$ of (\ref{model}) is sufficiently regular such that $u\in \oplus_{i=1}^J
H^k(\Omega_i)$ where $\{\Omega_i\}_{i=1}^J$ is a non-overlapping partition of the domain $\Omega$. Then, the following error estimate holds true:
\begin{equation}\label{erres}
\3bar e_h \3bar_{M_h}+\3bar\epsilon_h\3bar_{W_h} \leq
\left\{\begin{split}
Ch^{k}\|u\|_{k},\quad &j=k-1,\\
C(1+\tau^{-\frac12})h^{k}\|u\|_{k},\quad &j=k.
\end{split}\right.
\end{equation}
%provided that the meshsize $h<h_0$ holds true for a sufficiently small, but fixed $h_0>0$.
\end{theorem}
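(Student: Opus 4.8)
The plan is to bound the dual error $\3bar\epsilon_h\3bar_{W_h}$ first, entirely from the energy identity hidden in the error equations, and then to bootstrap to the primal error $\3bar e_h\3bar_{M_h}$ using the inf-sup condition of Lemma \ref{lem3}. The whole argument rests on estimating the linear functional $\ell_u$ of (\ref{lu}) against $\3bar\cdot\3bar_{W_h}$, and it is in this estimate that the distinction between $j=k-1$ and $j=k$ (hence the factor $\tau^{-1/2}$) appears.

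\textbf{Step 1 (identity for the dual error).} I would set $\sigma=\epsilon_h$ in (\ref{sehv}) and $v=e_h$ in (\ref{sehv2}). Since (\ref{sehv2}) gives $b(e_h,\epsilon_h)=0$, the coupling term in (\ref{sehv}) drops out, and by (\ref{triple-bar-one}) this yields the clean identity $\3bar\epsilon_h\3bar_{W_h}^2=s(\epsilon_h,\epsilon_h)=\ell_u(\epsilon_h)$. Thus everything reduces to showing $|\ell_u(\sigma)|\le C(1+\tau^{-1/2})h^k\|u\|_k\,\3bar\sigma\3bar_{W_h}$, after which the $\epsilon_h$-bound follows by dividing out one power of $\3bar\epsilon_h\3bar_{W_h}$.

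\textbf{Step 2 (the functional $\ell_u$, the crux).} For the boundary term $\sum_T\langle\bbeta(u-{\cal Q}_hu)\cdot\bn,\sigma_b-\sigma_0\rangle_{\partial T}$ I apply Cauchy--Schwarz on each $\partial T$ with the weights $h_T^{1/2}$ and $h_T^{-1/2}$: the factor $\big(\sum_T h_T^{-1}\|\sigma_0-\sigma_b\|_{\partial T}^2\big)^{1/2}$ is dominated by $\3bar\sigma\3bar_{W_h}$ from the definition (\ref{whnorm}), while the factor $\big(\sum_T h_T\|\bbeta(u-{\cal Q}_hu)\|_{\partial T}^2\big)^{1/2}$ is converted to an interior estimate by the trace inequality (\ref{tracein}) and then controlled by the approximation estimate (\ref{error2}) (applied piecewise over the $\Omega_i$ with $s=0$ and $s=1$, $m=k$), giving $Ch^k\|u\|_k$. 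The volume term $\sum_T({\cal Q}_hu-u,c\sigma_0)_T$ is the main obstacle and the place where the orthogonality of ${\cal Q}_h$ must be exploited. When $j=k-1$ and $c$ is piecewise constant, $c\sigma_0\in P_{k-1}(T)$, so this term vanishes identically by the defining orthogonality of the $L^2$-projection ${\cal Q}_h$; hence $|\ell_u(\sigma)|\le Ch^k\|u\|_k\3bar\sigma\3bar_{W_h}$. When $j=k$ this cancellation fails, and the trick I would use is to write $c\sigma_0=(c\sigma_0-\bbeta\cdot\nabla\sigma_0)+\bbeta\cdot\nabla\sigma_0$; since $\sigma_0\in P_k(T)$ and $\bbeta$ is piecewise constant, $\bbeta\cdot\nabla\sigma_0\in P_{k-1}(T)$ is orthogonal to $u-{\cal Q}_hu$, leaving only $({\cal Q}_hu-u,-(\bbeta\cdot\nabla\sigma_0-c\sigma_0))_T$. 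Cauchy--Schwarz together with $\big(\sum_T\|\bbeta\cdot\nabla\sigma_0-c\sigma_0\|_T^2\big)^{1/2}\le\tau^{-1/2}\3bar\sigma\3bar_{W_h}$ (again from (\ref{whnorm})) and (\ref{error2}) then bounds this by $C\tau^{-1/2}h^k\|u\|_k\3bar\sigma\3bar_{W_h}$. Combining the two terms gives the claimed bound on $\ell_u$, and hence $\3bar\epsilon_h\3bar_{W_h}\le C(1+\tau^{-1/2})h^k\|u\|_k$ (with the $\tau^{-1/2}$ absent for $j=k-1$).

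\textbf{Step 3 (inf-sup bootstrap for the primal error).} Finally I would invoke Lemma \ref{lem3} with $v=e_h$ to obtain $\tilde\sigma\in W^{0,\Gamma_+}_{j,h}$ with $|b(e_h,\tilde\sigma)|=\3bar e_h\3bar_{M_h}^2$ and $\3bar\tilde\sigma\3bar_{W_h}\le C\3bar e_h\3bar_{M_h}$. Taking $\sigma=\tilde\sigma$ in (\ref{sehv}) rewrites $b(e_h,\tilde\sigma)=\ell_u(\tilde\sigma)-s(\epsilon_h,\tilde\sigma)$, so that $\3bar e_h\3bar_{M_h}^2\le|\ell_u(\tilde\sigma)|+|s(\epsilon_h,\tilde\sigma)|$. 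The first term is handled by Step 2, and the second by the Cauchy--Schwarz inequality for the positive semidefinite form $s$, giving $|s(\epsilon_h,\tilde\sigma)|\le\3bar\epsilon_h\3bar_{W_h}\3bar\tilde\sigma\3bar_{W_h}$. Both are of the form (constant)$\cdot\3bar\tilde\sigma\3bar_{W_h}\le C(\cdots)\3bar e_h\3bar_{M_h}$, so dividing by $\3bar e_h\3bar_{M_h}$ and inserting the Step 2 bound on $\3bar\epsilon_h\3bar_{W_h}$ yields $\3bar e_h\3bar_{M_h}\le C(1+\tau^{-1/2})h^k\|u\|_k$, with the factor collapsing to $Ch^k\|u\|_k$ when $j=k-1$. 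Adding the two estimates gives (\ref{erres}). I expect the orthogonality manipulation in Step 2 for $j=k$ to be the only genuinely delicate point; the rest is standard Cauchy--Schwarz plus trace and projection estimates.
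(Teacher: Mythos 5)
Your proposal is correct and follows essentially the same route as the paper's own proof: the energy identity $\3bar\epsilon_h\3bar_{W_h}^2=s(\epsilon_h,\epsilon_h)=\ell_u(\epsilon_h)$, the split of $\ell_u$ into a boundary term (trace inequality plus projection estimate) and a volume term handled by the orthogonality of ${\cal Q}_h$ — which is exactly the paper's unexplained identity $({\cal Q}_hu-u,c\epsilon_0)_T=({\cal Q}_hu-u,c\epsilon_0-\bbeta\cdot\nabla\epsilon_0)_T$, valid since $\bbeta\cdot\nabla\epsilon_0\in P_{k-1}(T)$ for piecewise constant $\bbeta$ — followed by the inf-sup bootstrap of Lemma \ref{lem3} for $\3bar e_h\3bar_{M_h}$. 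Your Step 2 in fact makes explicit the degree-counting argument that the paper leaves implicit, but the decomposition, the key lemmas, and the origin of the $\tau^{-1/2}$ factor for $j=k$ are identical.
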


\begin{proof}
By letting $\sigma=\epsilon_h$ in (\ref{sehv}) and $v=e_h$ in (\ref{sehv2}),  we arrive at
$$
s(\epsilon_h, \epsilon_h)= \ell_u(\epsilon_h),
$$
where
$$
\ell_u(\epsilon_h) = \sum_{T\in {\cal T}_h}\langle \bbeta (u-{\cal Q}_h u)  \cdot \bn ,\epsilon_b-\epsilon_0 \rangle_{\partial T} +  ({\cal Q}_hu-u, c\epsilon_0 )_T .
$$
From the relation \eqref{triple-bar-one} and the above two equations we have
\begin{equation}\label{June7:001}
\begin{split}
\3bar\epsilon_h\3bar_{W_h}^2&=s(\epsilon_h,\epsilon_h)\\
 &=|\ell_u(\epsilon_h)| \\
&\leq \sum_{T\in {\cal T}_h} \left| \langle \bbeta (u-{\cal Q}_h u)  \cdot \bn ,\epsilon_b-\epsilon_0 \rangle_{\partial T} \right| + \left| ({\cal Q}_hu-u, c\epsilon_0 )_T  \right|
\end{split}
\end{equation}
It follows from the Cauchy-Schwarz inequality, the trace inequality (\ref{tracein}), (\ref{whnorm}), and (\ref{error2}) with $m=k$ that
\begin{equation}\label{aij}
\begin{split}
 & \sum_{T\in {\cal T}_h}\Big|\langle \bbeta (u-{\cal Q}_h u)  \cdot \bn ,\epsilon_b-\epsilon_0 \rangle_{\partial T} \Big|\\
\leq & \Big(\sum_{T\in {\cal
T}_h}h_T^{-1}\|\epsilon_0-\epsilon_b\|^2_{\partial
T}\Big)^{\frac{1}{2}} \Big(\sum_{T\in {\cal T}_h}h_T\| \bbeta (u-{\cal Q}_h u)  \cdot \bn \|^2_{\partial T}\Big)^{\frac{1}{2}}\\
\leq & C \3bar\epsilon_h \3bar_{W_h} \Big(\sum_{T\in {\cal T}_h} \| \bbeta (u-{\cal Q}_h u)  \|^2_{ T}+h_T^2\|
 \bbeta (u-{\cal Q}_h u) \|^2_{ 1,T}\Big)^{\frac{1}{2}}\\
\leq & C \3bar\epsilon_h \3bar _{W_h} h^{k}\|u\|_{k}.
\end{split}
\end{equation}
Analogously, we have from the standard Cauchy-Schwarz inequality and the estimate (\ref{error2}) with $m=k$  that
\begin{equation}\label{aij-2}
\begin{split}
\sum_{T\in {\cal T}_h}\Big| ({\cal Q}_hu-u, c\epsilon_0 )_T    \Big|  & = \sum_{T\in {\cal T}_h}\Big| ({\cal Q}_hu-u, c\epsilon_0 -\bbeta\cdot\nabla\epsilon_0)_T  \Big| \\
& \leq  \sum_{T\in {\cal T}_h} \|{\cal Q}_hu-u\|_T \|\bbeta\cdot\nabla\epsilon_0 - c\epsilon_0\|_T\\
&\leq  \tau^{-\frac12} \|{\cal Q}_hu-u\| \left(\sum_{T\in {\cal T}_h} \tau \|\bbeta\cdot\nabla\epsilon_0 - c\epsilon_0\|_T^2\right)^{\frac12}\\
& \leq  C  \tau^{-\frac12}  h^{k} \3bar\epsilon_h \3bar _{W_h}\|u\|_{k}.
\end{split}
\end{equation}
Substituting \eqref{aij}-\eqref{aij-2} into \eqref{June7:001} yields
\begin{equation}\label{erreph}
 \3bar\epsilon_h\3bar_{W_h}^2 =s(\epsilon_h,\epsilon_h)=|\ell_u(\epsilon_h)|\leq C(1+\tau^{-\frac12})h^{k} \3bar\epsilon_h \3bar _{W_h}  \|u\|_{k},
\end{equation}
which asserts the estimate for $\epsilon_h$ in \eqref{erres}. Observe that the left-hand side of \eqref{aij-2} is zero for $j=k-1$. Thus, the estimate \eqref{erreph} holds true without the factor $\tau^{-\frac12}$.

To estimate $e_h$, from the inf-sup condition in Lemma \ref{lem3} we have a function $\tilde\sigma\in W_{j,h}^{0, \Gamma_+}$ such that
\begin{equation}\label{inf-sup-app1}
 \3bar e_h \3bar_{M_h}^2 = |b(e_h,\tilde\sigma)|, \quad \3bar \tilde \sigma\3bar_{W_h} \leq C \3bar e_h \3bar_{M_h}.
\end{equation}
It follows from the error equation (\ref{sehv}), the estimates (\ref{erreph}) and (\ref{inf-sup-app1}) that
\begin{equation}\label{errehh}
\begin{split}
\3bar e_h\3bar_{M_h}^2 =  & |b(e_h,\tilde\sigma)| \\
= & |\ell_u(\tilde\sigma)-s(\epsilon_h,\tilde\sigma)| \\
\leq  & |\ell_u(\tilde\sigma)| + |s(\epsilon_h,\tilde\sigma)|\\
\leq &   \left\{\begin{split}
Ch^{k}\|u\|_{k}\3bar   \tilde\sigma\3bar_{W_h} ,\quad &j=k-1\\
C(1+\tau^{-\frac12})h^{k}\|u\|_{k}  \3bar \tilde\sigma\3bar_{W_h},\quad &j=k
\end{split}\right.\\
\leq &  \left\{\begin{split}
Ch^{k}\|u\|_{k}\3bar e_h\3bar_{M_h},\quad &j=k-1,\\
C(1+\tau^{-\frac12})h^{k}\|u\|_{k}\3bar e_h\3bar_{M_h},\quad &j=k,
\end{split}\right.
\end{split}
\end{equation}
which gives rise to the estimate for $e_h$ in \eqref{erres}.
Combining (\ref{erreph}) with  (\ref{errehh}) completes the proof of the theorem.
\end{proof}

 \section{Error Estimate in $L^2$}\label{Section:L2Error}
We shall  use the usual duality argument to establish an error estimate in $L^2$. To this end, consider the problem of seeking $\phi$ such that
\begin{align}\label{dual1}
\bbeta \cdot \nabla \phi - c\phi=&\eta,\qquad \text{in } \Omega,\\
\phi=& 0,\qquad \text{on }\Gamma_+, \label{dual2}
\end{align}
where $\eta\in L^2(\Omega)$. The auxiliary problem  (\ref{dual1})-(\ref{dual2}) is said to satisfy a local $H^1$-regularity if there exists a non-overlapping partition of the domain $\Omega = \bigcup_{i=1}^J \Omega_i$ such that the solution $\phi$ exists, $\phi\in H^1(\Omega_i)$ for all values of $i$, and
\begin{equation}\label{regul}
\left(\sum_{i=1}^J \|\phi\|_{1,\Omega_i}^2\right)^{1/2}\leq C\|\eta\|,
\end{equation}
where $C$ is a generic constant.

Denote by $X_1^*$ the set of all functions $\eta\in L^2(\Omega)$ so that the dual problem (\ref{dual1})-(\ref{dual2}) has the local $H^1$-regularity satisfying the estimate \eqref{regul}.

\begin{theorem}\label{Thm:L2errorestimate}
Let $u_h\in M_{k-1,h}$ be the numerical solution of the linear convection problem (\ref{model}) arising from the PD-WG algorithm (\ref{32})-(\ref{2}), with $\lambda_h\in W_{j, h}^{0, \Gamma_+}$ being the numerical Lagrangian multiplier. Assume that the exact solution $u$ is sufficiently regular such that $u\in \oplus_{i=1}^J H^{k}(\Omega_i)$. Under the local $H^{1}$-regularity assumption (\ref{regul}) for the dual problem (\ref{dual1})-(\ref{dual2}), the following error estimate holds true:
\begin{equation}\label{e0}
\sup_{\eta\in X_1^*,\eta \neq 0}\frac{|(u_h - {\cal Q}_hu, \eta)|}{\|\eta\|} \leq \left\{\begin{split}
Ch^{k}\|u\|_{k},\quad &j=k-1,\\
C(1+\tau^{-\frac12})h^{k}\|u\|_{k},\quad &j=k.
\end{split}\right.
\end{equation}
%provided that the meshsize $h<h_0$ holds true for a sufficiently small, but fixed $h_0>0$.
\end{theorem}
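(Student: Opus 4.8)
The plan is to run a standard duality (Aubin--Nitsche) argument against the adjoint problem \eqref{dual1}-\eqref{dual2}. Fix $\eta\in X_1^*$ and let $\phi$ be the corresponding dual solution, which is piecewise $H^1$ and obeys the local regularity bound \eqref{regul}. Since $\phi=0$ on $\Gamma_+$, the boundary projection $Q_b\phi$ vanishes on $\Gamma_+$, so $Q_h\phi=\{Q_0\phi,Q_b\phi\}$ belongs to the test space $W_{j,h}^{0,\Gamma_+}$ and is a legitimate test function in the error equation \eqref{sehv}. The whole argument hinges on the identity $(e_h,\eta)=b(e_h,Q_h\phi)$.

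First I would establish this identity. Using the commutative property $\nabla_w(Q_h\phi)={\cal Q}_h(\nabla\phi)$ from Lemma \ref{Lemma5.1} together with the definition \eqref{b-form-local} of $b(\cdot,\cdot)$, each elementwise term becomes $(e_h,\bbeta\cdot{\cal Q}_h(\nabla\phi)-cQ_0\phi)_T$. Because $e_h\in M_{k-1,h}$ and $\bbeta,c$ are piecewise constant, the vector $\bbeta e_h$ and the scalar $ce_h$ are degree-$(k-1)$ polynomials on $T$, so the projections ${\cal Q}_h$ and $Q_0$ act as the identity after moving them onto $e_h$; this collapses the term to $(e_h,\bbeta\cdot\nabla\phi-c\phi)_T$. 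Summing over $T$ and invoking the adjoint equation $\bbeta\cdot\nabla\phi-c\phi=\eta$ yields $(e_h,\eta)=b(e_h,Q_h\phi)$.

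Next I would insert $\sigma=Q_h\phi$ into the error equation \eqref{sehv}, giving $(e_h,\eta)=\ell_u(Q_h\phi)-s(\epsilon_h,Q_h\phi)$, and estimate the two pieces. For the stabilizer term I would use Cauchy--Schwarz, $|s(\epsilon_h,Q_h\phi)|\le\3bar\epsilon_h\3bar_{W_h}\3bar Q_h\phi\3bar_{W_h}$, feed in the a priori bound $\3bar\epsilon_h\3bar_{W_h}\le C(1+\tau^{-\frac12})h^{k}\|u\|_{k}$ from Theorem \ref{theoestimate}, and prove the stability estimate $\3bar Q_h\phi\3bar_{W_h}\le C\|\eta\|$. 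The jump part of the latter is handled by the trace inequality \eqref{tracein} and the projection error estimates \eqref{errorQ0}, reducing $\sum_T h_T^{-1}\|Q_0\phi-Q_b\phi\|_{\partial T}^2$ to $\sum_i\|\phi\|_{1,\Omega_i}^2$ and then to $\|\eta\|^2$ via \eqref{regul}; the volumetric part $\tau\sum_T\|\bbeta\cdot\nabla Q_0\phi-cQ_0\phi\|_T^2$ is bounded after writing $\bbeta\cdot\nabla Q_0\phi-cQ_0\phi=\eta+(\text{projection error})$. For $\ell_u(Q_h\phi)$ given in \eqref{lu}, the boundary contribution is controlled exactly as in \eqref{aij} to give $Ch^{k}\|u\|_{k}\|\eta\|$, while the volume contribution $({\cal Q}_hu-u,cQ_0\phi)_T$ vanishes identically when $j=k-1$ (since $cQ_0\phi\in P_{k-1}$ is orthogonal to ${\cal Q}_hu-u$) and is of higher order $O(h^{k+1})$ when $j=k$.

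The main obstacle will be the volumetric coupling through the stabilizer. Since $\bbeta\cdot\nabla Q_0\phi-cQ_0\phi$ is genuinely of size $\|\eta\|$, the parameter $\tau$ enters $\3bar Q_h\phi\3bar_{W_h}$ nontrivially, and the clean form $(1+\tau^{-\frac12})$ of the claimed bound relies on $\tau$ being a bounded stabilization parameter so that this term is absorbed into the constant; I would keep the bookkeeping of the $\tau$-powers explicit, noting that the jump part supplies the stated $(1+\tau^{-\frac12})$ dependence for $j=k$ and the $\tau$-free dependence for $j=k-1$, while the volumetric part is the delicate piece. Combining the bounds on $\ell_u(Q_h\phi)$ and $s(\epsilon_h,Q_h\phi)$, dividing by $\|\eta\|$, and taking the supremum over $\eta\in X_1^*$ then yields \eqref{e0}.
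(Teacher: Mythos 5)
Your proposal is correct and takes essentially the same route as the paper's own proof: you test with $Q_h\phi$, use the commutative property of Lemma \ref{Lemma5.1} together with the piecewise-constant coefficients to obtain $(e_h,\eta)=b(e_h,Q_h\phi)$, and then estimate the stabilizer term and the two terms of $\ell_u(Q_h\phi)$ exactly as the paper does with its $I_1$, $I_2$, $I_3$. The only cosmetic difference is that you invoke the error equation \eqref{sehv} of Lemma \ref{errorequa} directly where the paper re-derives the same identity (its equation \eqref{te1}) by integration by parts; your extra remarks --- that $Q_b\phi=0$ on $\Gamma_+$ makes $Q_h\phi$ admissible, that the volume term of $\ell_u$ vanishes when $j=k-1$, and that the $\tau$-dependence of $\3bar Q_h\phi\3bar_{W_h}$ is absorbed into the constant by treating $\tau$ as bounded --- are correct and, if anything, make explicit what the paper leaves implicit.
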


 \begin{proof} For any $\sigma=\{\sigma_0;\sigma_b\}\in W_{j,h}^{0, \Gamma_+}$, we use (\ref{32}), (\ref{model}), and the usual integration by parts to obtain
  \begin{equation*}\label{eq}
  \begin{split}
   & s(\lambda_h, \sigma)+\sum_{T\in {\cal T}_h}(u_h, \bbeta \cdot \nabla_w \sigma- c\sigma_0)_T\\
  =& \sum_{e\subset \Gamma_-} \langle \sigma_b, \bbeta \cdot \bn g\rangle_e-(f, \sigma_0)\\
    =& \sum_{e\subset \Gamma_-} \langle \sigma_b, \bbeta \cdot \bn g\rangle_e-\sum_{T\in {\cal T}_h}(\nabla\cdot(\bbeta u)+cu, \sigma_0)_T\\
  =& \sum_{e\subset \Gamma_-} \langle \sigma_b, \bbeta \cdot \bn g\rangle_e+\sum_{T\in {\cal T}_h} (\bbeta u, \nabla \sigma_0)_T-\langle \bbeta u\cdot \bn, \sigma_0\rangle_{\partial T}-(cu, \sigma_0)_T\\
 =& \sum_{e\subset \Gamma_-} \langle \sigma_b, \bbeta \cdot \bn g\rangle_e+\sum_{T\in {\cal T}_h} (\bbeta {\cal Q}_hu, \nabla \sigma_0)_T-\langle \bbeta u\cdot \bn, \sigma_0\rangle_{\partial T}-(cu, \sigma_0)_T\\
 =& \sum_{e\subset \Gamma_-} \langle \sigma_b, \bbeta \cdot \bn g\rangle_e-\sum_{T\in {\cal T}_h}  (\nabla \cdot(\bbeta {\cal Q}_hu),   \sigma_0)_T-\langle \bbeta {\cal Q}_hu \cdot\bn, \sigma_0\rangle_{\partial T} \\
 &+\langle \bbeta u\cdot \bn, \sigma_0\rangle_{\partial T}+(cu, \sigma_0)_T\\
 =& \sum_{e\subset \Gamma_-} \langle \sigma_b, \bbeta \cdot \bn g\rangle_e-\sum_{T\in {\cal T}_h} (\nabla \cdot(\bbeta {\cal Q}_hu),   \sigma_0)_T-\langle \bbeta {\cal Q}_hu \cdot\bn, \sigma_0-\sigma_b\rangle_{\partial T} \\
 &+\langle \bbeta u\cdot \bn, \sigma_0-\sigma_b\rangle_{\partial T}+(cu, \sigma_0)_T -\langle \bbeta {\cal Q}_hu \cdot\bn,  \sigma_b\rangle_{\partial T}+ \langle \bbeta  u \cdot\bn,   \sigma_b\rangle_{\partial T}  \\
 =& \sum_{e\subset \Gamma_-} \langle \sigma_b, \bbeta \cdot \bn g\rangle_e+\sum_{T\in {\cal T}_h}  \langle \bbeta {\cal Q}_hu \cdot\bn, \sigma_0-\sigma_b\rangle_{\partial T}
-\langle \bbeta u\cdot \bn, \sigma_0-\sigma_b\rangle_{\partial T}\\ &-(cu, \sigma_0)_T+( \bbeta {\cal Q}_hu, \nabla_w  \sigma)_T-\sum_{e\subset \partial \Omega}\langle \bbeta  u \cdot\bn,   \sigma_b\rangle_{e}  \\
 =&  \sum_{T\in {\cal T}_h}  \langle \bbeta ({\cal Q}_hu-u) \cdot\bn, \sigma_0-\sigma_b\rangle_{\partial T} +( {\cal Q}_hu,  \bbeta \cdot \nabla_w  \sigma-c\sigma_0)_T\\
&+ (c ({\cal Q}_hu-u), \sigma_0)_T,\\
\end{split}
\end{equation*}
where we have used $u=g$ on $\Gamma_-$ and $\sigma_b=0$ on $\Gamma_+$ in the last two lines. The last equation can be rewritten as follows
\begin{equation}\label{te1}
\begin{split}
   &  \sum_{T\in {\cal T}_h}(u_h-{\cal Q}_hu, \bbeta \cdot \nabla_w \sigma-c\sigma_0)_T\\
  =& -s(\lambda_h, \sigma)+ \sum_{T\in {\cal T}_h}  \langle \bbeta ({\cal Q}_hu-u) \cdot\bn, \sigma_0-\sigma_b\rangle_{\partial T}+ (c ({\cal Q}_hu-u), \sigma_0)_T. \\
\end{split}
\end{equation}

For any $\eta\in X_1^*$, let $\phi\in \oplus_{i=1}^J H^1(\Omega_i)$ be the solution of the dual problem \eqref{dual1}-\eqref{dual2}.
By letting $\sigma=Q_h \phi=\{Q_0 \phi, Q_b \phi\}$ in (\ref{te1}) we have from  (\ref{dual1}), (\ref{l}), and \eqref{te1} that
  \begin{equation}\label{ett}
  \begin{split}
(u_h-{\cal Q}_hu, \eta) = & (u_h-{\cal Q}_hu, \bbeta \cdot\nabla \phi-c \phi)\\
 =  &  \sum_{T\in {\cal T}_h}(u_h-{\cal Q}_hu, \bbeta \cdot \nabla_w ({Q}_h\phi)-cQ_0 \phi)_T\\
  =&   -s(\lambda_h, Q_h\phi) + \sum_{T\in {\cal T}_h}  \langle \bbeta ({\cal Q}_hu-u) \cdot\bn, Q_0\phi-Q_b\phi\rangle_{\partial T}  \\
& +  (c ({\cal Q}_hu-u), Q_0 \phi)_T\\
  =& I_1+I_2+I_3,\\
\end{split}
\end{equation}
where $\{I_j\}_{j=1}^3$ represent the corresponding terms in the two lines above them. The rest of the proof is devoted to an analysis for the three terms $I_1$, $I_2$, and $I_3$.

For the term $I_1$, we use the Cauchy-Schwarz inequality, the triangle inequality, the trace inequality (\ref{tracein}),  the regularity assumption (\ref{regul}), and the estimate (\ref{errorQ0}) with $n=0$, and \eqref{dual1} to obtain
  \begin{equation} \label{er2}
  \begin{split}
 |I_1|&= |s(\lambda_h, Q_h\phi)| \\
& \leq \sum_{T\in {\cal T}_h} h_T^{-1}  |\langle  \lambda_0-\lambda_b, Q_0\phi-Q_b\phi\rangle_{\partial T}| \\
& \ + \tau |(\bbeta\cdot\nabla\lambda_0- c \lambda_0, \bbeta\cdot\nabla (Q_0\phi) - c Q_0\phi)_T| \\
  \leq &    \Big(\sum_{T\in {\cal T}_h} h_T^{-1}\|  \lambda_0-\lambda_b\|^2_{\partial T}\Big)^{\frac{1}{2}}  \Big(\sum_{T\in {\cal T}_h}  h_T^{-1}  \| Q_0\phi-Q_b\phi\|^2_{\partial T}\Big)^{\frac{1}{2}} \\&
  +\Big(\sum_{T\in {\cal T}_h} \tau   \|\bbeta\cdot\nabla\lambda_0-c\lambda_0\|^2_{T}\Big)^{\frac{1}{2}}  \Big(\sum_{T\in {\cal T}_h} \tau \|\bbeta\cdot\nabla (Q_0\phi)-cQ_0 \phi\|^2_{T}\Big)^{\frac{1}{2}} \\
  \leq & C \3bar \epsilon_h\3bar _{W_h} \Big (\sum_{T\in {\cal T}_h}  h_T^{-2}  \| Q_0\phi- \phi\|^2_{T}+\| Q_0\phi- \phi\|^2_{1, T}+\tau   \|Q_0 \phi\|^2_{1,T}\Big)^{\frac{1}{2}} \\
   \leq & C \3bar \epsilon_h\3bar _{W_h} \Big (\sum_{i=1}^J \|\phi\|_{1,\Omega_i}^2 \Big)^{\frac{1}{2}}\\
      \leq & C \3bar \epsilon_h\3bar _{W_h} \|\eta\|.
\end{split}
\end{equation}

As to the second term $I_2$, from the Cauchy-Schwarz inequality, the trace inequality (\ref{tracein}), the estimate (\ref{error2}) with $m=k$, the estimate \eqref{errorQ0} with $n=0$, and the regularity assumption (\ref{regul}), we obtain
  \begin{equation} \label{er1}
  \begin{split}
 | I_2|= &\left|\sum_{T\in {\cal T}_h}   \langle \bbeta ({\cal Q}_hu-u) \cdot\bn, Q_b\phi-Q_0\phi\rangle_{\partial T}\right|   \\
  \leq & \Big(\sum_{T\in {\cal T}_h}  \| \bbeta ({\cal Q}_hu-u) \cdot\bn\|^2_{\partial T}\Big)^{\frac{1}{2}}  \Big(\sum_{T\in {\cal T}_h}  \| Q_b\phi-Q_0\phi\|^2_{\partial T}\Big)^{\frac{1}{2}} \\
  \leq & \Big(\sum_{T\in {\cal T}_h} h_T^{-1} \|{\cal Q}_hu-u\|^2_{T}+h_T \|{\cal Q}_hu-u \|^2_{1, T}\Big)^{\frac{1}{2}} \\
  & \Big(\sum_{T\in {\cal T}_h} h_T^{-1} \|  \phi-Q_0\phi\|^2_{T}+h_T \|  \phi-Q_0\phi\|^2_{1, T}\Big)^{\frac{1}{2}} \\
  \leq &Ch^k\|u\|_k \sum_{i=1}^J \|\phi\|_{1,\Omega_i}\\
  \leq &Ch^{k}\|u\|_k \|\eta\|.
   \end{split}
\end{equation}

Finally for the term $I_3$, we have
 \begin{equation} \label{er3}
  \begin{split}
 | I_3|= &\left| \sum_{T\in {\cal T}_h}   (c ({\cal Q}_hu-u), Q_0 \phi)_T \right| \\
\leq & C \sum_{T\in {\cal T}_h}  | (c ({\cal Q}_hu-u), Q_0 \phi)_T| \\
\leq & C \|{\cal Q}_hu-u\| \ \|Q_0 \phi\| \\
  \leq &Ch^k\|u\|_k \sum_{i=1}^J \|\phi\|_{0,\Omega_i}\\
  \leq &Ch^{k}\|u\|_k \|\eta\|.
   \end{split}
\end{equation}

Substituting (\ref{er2})-(\ref{er3}) into (\ref{ett}) yields
$$
 |(u_h-{\cal Q}_hu, \eta)| \leq C(h^{k}\|u\|_k + \3bar \epsilon_h\3bar _{W_h}) \|\eta\|,
$$
which, together with \eqref{erres}, gives
\begin{equation*}
\begin{split}
\sup_{\eta \in X_1^*, \eta\neq 0}\frac{|(u_h-{\cal Q}_hu, \eta)| }{\|\eta\|}\leq \left\{\begin{split}
Ch^{k}\|u\|_{k},\quad &j=k-1,\\
C(1+\tau^{-\frac12})h^{k}\|u\|_{k},\quad &j=k.
\end{split}\right.
\end{split}
\end{equation*}
This completes the proof of the Theorem.
\end{proof}

\section{Numerical Experiments}\label{Section:Numerics} In this section, we
shall numerically demonstrate the accuracy and order of convergence for the primal-dual weak Galerkin finite element method (\ref{32})-(\ref{2}) through a series of numerical experiments. The finite element partition $\T_h$ is generated through a successive uniform refinement of a coarse triangulation of the domain by dividing each coarse triangular element into four congruent sub-triangles by connecting the mid-points of the three edges of the triangular element.

The numerical experiments are conducted on both convex and non-convex polygonal domains. The convex domain is given by the unit square $\Omega_1=(0,1)^2$. The non-convex domains include two cases: (i) the L-shaped domain $\Omega_2$ with vertices $A_1=(0, 0)$, $A_2=(2, 0)$, $A_3=(2, 1)$, $A_4=(1, 1)$, $A_5=(1, 2)$, and $A_6=(0, 2)$; (ii) the cracked square domain $\Omega_3=(-1, 1)^2\setminus(0,1)\times{0}$ with a crack along the edge $(0, 1)\times 0$. The inflow boundary $\Gamma_-$ is determined by using the condition of $\bbeta \cdot \bn<0$ with $\bn$ being the unit outward normal direction to $\partial \Omega$. The right-hand side function $f$ and the inflow Dirichlet boundary data $g$ are chosen to match the exact solution $u$ (if the exact solution $u$ is known for the test problem).

The primal-dual weak Galerkin scheme (\ref{32})-(\ref{2}) is implemented for the lowest order $k=1$ and $j=k$. The finite element spaces for the primal variable $u_h$ and the Lagrangian multiplier $\lambda_h$ are specified respectively as follows:
$$
M_{0,h}=\{u_h: \ u_h|_T \in P_0(T),\ \forall T\in {\cal T}_h\},
$$
$$
W_{1,h}=\{\lambda_h=\{\lambda_0,\lambda_b\}: \ \lambda_0\in P_1(T), \lambda_b\in P_1(e), \forall e\subset\pT, \forall T\in {\cal T}_h\}.
$$

Denote by $u_h\in M_{0,h}$ and $\lambda_h=\{\lambda_0, \lambda_b\}\in W_{1,h}$ the numerical solution arising from the PD-WG finite element scheme (\ref{32})-(\ref{2}) with $j=k=1$. The approximation $u_h$ for the primal variable is compared with the exact solution $u$ on each element at the element center - known as the nodal point interpolation $I_h u$. Note that the numerical Lagrangian multiplier $\lambda_h$ approximates the exact solution $\lambda\equiv 0$. The error functions are denoted as
$$
e_h=u_h-I_h u  \quad \text{and}\quad
\epsilon_h=\lambda_h-Q_h\lambda=\{\lambda_0, \lambda_b\}.
$$
The following $L^2$ norms are used to measure the error functions in the numerical tests:
$$
  \|e_h\|  = \Big(\sum_{T\in {\cal
T}_h} \int_T e_h^2 dT\Big)^{\frac{1}{2}},
$$
$$
 \| \lambda_0\|  = \Big(\sum_{T\in {\cal T}_h} \int_T \lambda_0^2 dT\Big)^{\frac{1}{2}}, \qquad  \|\lambda_b\|  = \Big(\sum_{e\in {\cal E}_h}h_T \int_{\partial T}  \lambda_b^2 ds\Big)^{\frac{1}{2}}.
$$

Tables \ref{NE:TRI:Case1-1}-\ref{NE:TRI:Case1-4} illustrate the performance of the PD-WG finite element scheme (\ref{32})-(\ref{2}) for the test problem (\ref{model}) with exact solution $u=1$ on the convex domain $\Omega_1$ and the non-convex domain $\Omega_2$, respectively. The convection vector is given by $\bbeta=[1, -1]$ and the reaction coefficient is $c=1$. The stabilization parameter $\tau$ assumes the values of $\tau=0$ and $\tau=1$, respectively. The numerical results in Tables \ref{NE:TRI:Case1-1}-\ref{NE:TRI:Case1-4} show that the errors are in machine accuracy which is consistent with the theory of $u_h=u$ and $\lambda_h=0$ for this particular test problem. The purpose of this test is to demonstrate the correctness of the code for the PD-WG algorithm (\ref{32})-(\ref{2}) with $j=k=1$.
%new
\begin{table}[H]
\begin{center}
\caption{Numerical error for exact solution $u=1$ on the unit square domain $\Omega_1$: $\bbeta=[1, -1]$, $c=1$, $\tau=1$.}\label{NE:TRI:Case1-1}
\begin{tabular}{|c|c|c|c|c|c|c|}
\hline
$1/h$        & $\|e_h\| $ &    $\| \lambda_0 \| $ &   $\|\lambda_b\|$     \\
\hline
1	&0.00E+00		&7.54E-17	&	4.71E-16
\\
\hline
2	&3.93E-17&		2.58E-16	&	5.11E-16
\\
\hline
4	&8.33E-17	&	7.80E-17	&	1.39E-16
\\
\hline
8	&1.06E-16&		3.85E-17	&	7.00E-17
\\
\hline
16	&1.60E-16	&	5.70E-17	&	8.36E-17
\\
\hline
32&3.45E-16	&	1.99E-16		&2.86E-16
\\
\hline
\end{tabular}
\end{center}
\end{table}

%new
\begin{table}[H]
\begin{center}
\caption{Numerical error for exact solution $u=1$ on the unit square domain $\Omega_1$: $\bbeta=[1, -1]$, $c=1$, $\tau=0$.}\label{NE:TRI:Case1-2}
\begin{tabular}{|c|c|c|c|c|c|c|}
\hline
$1/h$        & $\|e_h\| $ &    $\| \lambda_0 \| $ &   $\|\lambda_b\|$
\\
\hline
1	&0.00E+00	&	5.34E-17	&	7.28E-17
\\
\hline
2	&1.04E-16	&	2.06E-16&		4.94E-16
\\
\hline
4	&1.23E-16		&4.72E-16&		7.54E-16
\\
\hline
8	&1.70E-16&		1.23E-16	&	1.92E-16
\\
\hline
16&	1.81E-16	&	1.35E-16		&1.95E-16
\\
\hline
32	&3.32E-16	&	4.87E-17		&7.04E-17
\\
\hline
\end{tabular}
\end{center}
\end{table}

\begin{table}[H]
\begin{center}
\caption{Numerical error for exact solution $u=1$ on the L-shaped domain $\Omega_2$: $\bbeta=[1, -1]$, $c=1$, $\tau=1$.}\label{NE:TRI:Case1-3}
\begin{tabular}{|c|c|c|c|c|c|c|}
\hline
$1/h$        & $\|e_h\| $ &    $\| \lambda_0 \| $ &   $\|\lambda_b\|$            \\
\hline
1&0		&7.92E-16	&	1.67E-15
\\
\hline
2	&3.64E-16&		6.63E-16	&	1.19E-15
\\
\hline
4&	3.00E-16	&	2.76E-16&		4.34E-16
\\
\hline
8&	4.73E-16	&	7.00E-16&		1.02E-15
\\
\hline
16	&6.84E-16&		8.89E-16	&	1.27E-15					
\\
\hline
32&	1.16E-15	&	5.21E-16	&	7.42E-16
\\
\hline
\end{tabular}
\end{center}
\end{table}
%new
\begin{table}[H]
\begin{center}
\caption{Numerical error for exact solution $u=1$ on the L-shaped domain $\Omega_2$: $\bbeta=[1, -1]$, $c=1$, $\tau=0$.}\label{NE:TRI:Case1-4}
\begin{tabular}{|c|c|c|c|c|c|c|}
\hline
$1/h$        & $\|e_h\| $ &    $\| \lambda_0 \| $ &   $\|\lambda_b\|$
\\
\hline
1&	3.14E-16	&	2.94E-15	&	5.26E-15
\\
\hline
2&	1.88E-16	&	1.37E-15	&	2.35E-15
\\
\hline
4&	2.92E-16	&	1.02E-15	&	1.53E-15
\\
\hline
8	&3.30E-16&		9.58E-16	&	1.40E-15
\\
\hline
16&	8.18E-16	&	1.46E-15&		2.09E-15
\\
\hline
32&1.28E-15	&	1.30E-15	&	1.85E-15
\\
\hline
\end{tabular}
\end{center}
\end{table}

Tables \ref{NE:TRI:Case2-1} - \ref{NE:TRI:Case2-4} illustrate the numerical performance of the PW-WG method when $u=\sin(x)\cos(y)$ is employed as the exact solution in the numerical test. The stabilization parameter is taken to be $\tau=1$ and $\tau=0$. The convection vector and the reaction coefficient are respectively given by $\bbeta=[1, -1]$ and $c=1$. Tables \ref{NE:TRI:Case2-1}- \ref{NE:TRI:Case2-2} show that the convergence for $u_h$ in the discrete $L^2$ norm on the convex domain $\Omega_1$ is at the rate of the optimal order ${\cal O}(h)$ for both $\tau=1$ and $\tau=0$, which is consistent with the theory. Tables \ref{NE:TRI:Case2-3}- \ref{NE:TRI:Case2-4} indicate that the PD-WG method appears to be convergent at a rate slightly higher than ${\cal O}(h)$ on the L-shaped domain $\Omega_2$.
%new
\begin{table}[H]
\begin{center}
\caption{Numerical rates of convergence for exact solution $u=\sin(x)\cos(y)$ on an unit square domain $\Omega_1$: $\bbeta=[1, -1]$, $c=1$, $\tau=1$.}\label{NE:TRI:Case2-1}
\begin{tabular}{|c|c|c|c|c|c|c|}
\hline
$1/h$        & $\|e_h\| $ &  order&  $\| \lambda_0 \| $ &  order&  $\|\lambda_b\|$ &order
\\
\hline
1& 0.06461 	&&	0.6836 	&&0.7941 &	
\\
\hline
2&0.02966 &1.123 	&0.1892 	&1.853 & 0.2238 &	1.827
\\
\hline
4&0.01318 &1.170	&0.04600 &	2.040 &	0.05273	&2.086
\\
\hline
8&0.006401 &1.042 & 1.11E-02	 &2.046 &1.23E-02	&2.098
\\
\hline
16&0.003180 &1.009 &2.73E-03&2.029 &2.94E-03&2.064
\\
\hline
32& 0.001589 &1.001 &6.75E-04&2.016&7.18E-04&2.036
\\
\hline
\end{tabular}
\end{center}
\end{table}

%new
\begin{table}[H]
\begin{center}
\caption{Numerical rates of convergence for exact solution $u=\sin(x)\cos(y)$ on an unit square domain $\Omega_1$: $\bbeta=[1, -1]$, $c=1$, $\tau=0$.}\label{NE:TRI:Case2-2}
\begin{tabular}{|c|c|c|c|c|c|c|}
\hline
$1/h$        & $\|e_h\| $ &  order&  $\| \lambda_0 \| $ &  order&  $\|\lambda_b\|$ &order
\\
\hline
1&0.06022 	&&	0.8152 	&&	0.9571 	&
\\
\hline
2	&0.02685 &	1.165 &	0.2034 &	2.003 &	0.2454 &	1.964
\\
\hline
4&	0.01212&	1.148 &	0.04709&	2.111 &	0.05471 	&2.165
\\
\hline
8& 0.006096 &	0.9914&	1.12E-02&	2.066 &	1.26E-02&	2.124
\\
\hline
16	&0.003092 &	0.9791 &	2.74E-03&	2.036&	2.98E-03	&2.076
\\
\hline
32&	0.001561 &	0.9858 &	6.77E-04	&2.019&	7.23E-04	&2.042
\\
\hline
\end{tabular}
\end{center}
\end{table}

%new
\begin{table}[H]
\begin{center}
\caption{Numerical rates of convergence for exact solution $u=\sin(x)\cos(y)$ on the L-shaped domain $\Omega_2$: $\bbeta=[1, -1]$, $c=1$, $\tau=1$.}\label{NE:TRI:Case2-3}
\begin{tabular}{|c|c|c|c|c|c|c|}
\hline
$1/h$        & $\|e_h\| $ &  order&  $\| \lambda_0 \| $ &  order&  $\|\lambda_b\|$ &order
\\
\hline
1&0.1701 	&&1.114	&&	1.769	&
\\
\hline
2&0.08362 &1.025 &0.3369 &1.725 &0.5205 &1.765
\\
\hline
4&0.03665 &1.190 & 0.1355 &1.314 &0.2020 &1.365
\\
\hline
8&0.01409 &	1.379  &	0.05085 &	1.414 &	0.07398 &	1.449
\\
\hline
16&	0.005947 &	1.245 &	0.01424 &	1.837 &	0.02045	&1.855
\\
\hline
32&0.002693 &1.143&	0.003720&	1.936&0.005309& 1.946
\\
\hline
\end{tabular}
\end{center}
\end{table}

%new
\begin{table}[H]
\begin{center}
\caption{Numerical rates of convergence for exact solution $u=\sin(x)\cos(y)$ on the L-shaped domain $\Omega_2$: $\bbeta=[1, -1]$, $c=1$, $\tau=0$.}\label{NE:TRI:Case2-4}
\begin{tabular}{|c|c|c|c|c|c|c|}
\hline
$1/h$        & $\|e_h\| $ &  order&  $\| \lambda_0 \| $ &  order&  $\|\lambda_b\|$ &order
\\
\hline
1	&0.1590 &&1.364 	&&	2.210 &
\\
\hline
2&0.06908 &	1.203 &	0.4459	&1.614	&0.7237 	&1.611
\\
\hline
4&0.02846 &1.279 &	0.1998	&1.158 &	0.2995 &	1.273
\\
\hline
8 &0.01178  &	1.272 &	0.05795  &	1.786 &	0.08450 &	1.826
\\
\hline
16&0.005410 &	1.123 & 0.01505 &	1.945 &	0.02164 &	1.965
\\
\hline
32&	0.002590&	1.063&	0.003816&	1.979&	0.005449&	1.990
\\
\hline
\end{tabular}
\end{center}
\end{table}

Tables \ref{NE:TRI:Case3-1}-\ref{NE:TRI:Case3-4} show the numerical performance on the unit square domain $\Omega_1$ for various values of the stabilization parameter $\tau=0, 0.001, 1, 1000$. The exact solution is given by $u=\sin(\pi x)\sin(\pi y)$, the convection vector is $\bbeta=[1, 1]$ and the reaction coefficient is $c=-1$. Tables \ref{NE:TRI:Case3-1}-\ref{NE:TRI:Case3-3} demonstrate that the convergence rates for $e_h$ in the discrete $L^2$ norm are a bit higher than the expected optimal order ${\cal O}(h)$ for the case of $\tau=0$, $\tau=0.001$, and $\tau=1$. Observe from Tables \ref{NE:TRI:Case3-1}-\ref{NE:TRI:Case3-2} that the absolute error of $e_h$ is almost the same as the case of $\tau=0$ and $\tau=0.001$ when compared with the results obtained from the same mesh. The numerical results in Table \ref{NE:TRI:Case3-4} indicate that the convergence rate for $e_h$ in the discrete $L^2$ norm is much higher than the optimal order ${\cal O}(h)$ for the stabilization parameter $\tau=1000$, while the absolute error of $e_h$ in this case is larger than the absolute errors of $e_h$ for the case of $\tau=0, 0.001, 1$ compared in the same mesh. For large values of the stabilization parameter $\tau$, the rate of convergence for the Lagrangian multiplier $\lambda_h$ seems to suffer. This test suggests a preference of the PD-WG method with moderate values of the parameter $\tau$.
%new
 \begin{table}[H]
\begin{center}
\caption{Numerical rates of convergence for exact solution $u=\sin(\pi x)\sin(\pi y)$ on an unit square domain $\Omega_1$: $\bbeta=[1, 1]$, $c=-1$, $\tau=0$.}\label{NE:TRI:Case3-1}
\begin{tabular}{|c|c|c|c|c|c|c|}
\hline
$1/h$        & $\|e_h\| $ &  order&  $\| \lambda_0 \| $ &  order&  $\|\lambda_b\|$ &order
\\
\hline
 1	&0.3572 &&	0.9412	&&	1.115&
\\
\hline
2&	0.2116&	0.7556 &	0.9473 &	-0.009340 &1.482 &	-0.4110
\\
\hline
4&	0.09173 &	1.206 & 0.3473 & 1.448 &	0.5146 & 1.526
\\
\hline
8& 0.03086 &	1.571 &	0.1101 &	1.657 &	0.1573 &	1.710
\\
\hline
16	&0.01171 &	1.398 	&0.02954 &	1.899 &0.04127 &	1.930
\\
\hline
32&	0.005356 &	1.128 &	0.007508 &1.976  &0.01037 &1.993
\\
\hline
\end{tabular}
\end{center}
\end{table}
%new
\begin{table}[H]
\begin{center}
\caption{Numerical rates of convergence for exact solution $u=\sin(\pi x)\sin(\pi y)$ on an unit square domain $\Omega_1$: $\bbeta=[1, 1]$, $c=-1$, $\tau=0.001$.}\label{NE:TRI:Case3-2}
\begin{tabular}{|c|c|c|c|c|c|c|}
\hline
$1/h$        & $\|e_h\| $ &  order&  $\| \lambda_0 \| $ &  order&  $\|\lambda_b\|$ &order
\\
\hline
1	&0.3582 	&&	0.9358 &&	1.102 &
\\
\hline
2	&0.2116 &	0.7593 &	0.9461 &	-0.01592 	&1.480	&-0.4250
\\
\hline
4	&0.09169&	1.207&	0.3470 &	1.447 & 0.5142	&1.525
\\
\hline
8	&0.03084&	1.572 &	0.1101&	1.656 &0.1572 &	1.709
\\
\hline
16&	0.01169 &	1.399&	0.02953 &	1.898 &	0.04127 	&1.930
\\
\hline
32&0.005350	&1.128 &0.007508 &1.976 &0.01037	&1.993
\\
\hline
\end{tabular}
\end{center}
\end{table}
%new
\begin{table}[H]
\begin{center}
\caption{Numerical rates of convergence for exact solution $u=\sin(\pi x)\sin(\pi y)$ on an unit square domain $\Omega_1$: $\bbeta=[1, 1]$, $c=-1$, $\tau=1$.}\label{NE:TRI:Case3-3}
\begin{tabular}{|c|c|c|c|c|c|c|}
\hline
$1/h$        & $\|e_h\| $ &  order&  $\| \lambda_0 \| $ &  order&  $\|\lambda_b\|$ &order
\\
\hline
1&0.4448	&&	0.6546 	&&0.2871&
\\
\hline
2&	0.2306 &	0.9474&	0.7959&	-0.2819&	1.216 &	-2.082 \\
\hline
4&	0.09213 &	1.324 &	0.3166 &	1.330&	0.4658 &	1.384
\\
\hline
8&	0.02928 &	1.654&	0.1058&	1.582&	0.1509&	1.627
\\
\hline
16&	0.01022 &	1.518 &	0.02920 &	1.857 &	0.04080&	1.887
\\
\hline
32&	0.004612& 1.149 &	0.007484 &	1.964 &	0.01034&	1.980
\\
\hline
\end{tabular}
\end{center}
\end{table}
%new
\begin{table}[H]
\begin{center}
\caption{Numerical rates of convergence for exact solution $u=\sin(\pi x)\sin(\pi y)$ on an unit square domain $\Omega_1$: $\bbeta=[1, 1]$, $c=-1$, $\tau=1000$.}\label{NE:TRI:Case3-4}
\begin{tabular}{|c|c|c|c|c|c|c|}
\hline
$1/h$        & $\|e_h\| $ &  order&  $\| \lambda_0 \| $ &  order&  $\|\lambda_b\|$ &order
\\
\hline
1&	0.6863 &&	0.1841 &&	0.06168 &
\\
\hline
2&	0.3773 &	0.8632 &	0.2349 &	-0.3514 &	0.2919 &	-2.243
\\
\hline
4&	0.1802 &	1.066 &	0.1257 &	0.9026&	0.1692 	&0.7866
\\
\hline
8&	0.08886 &	1.020 &	0.05015 &	1.325 &	0.06880 &	1.299
\\
\hline
16&	0.03389 &	1.391 &	0.01999 &	1.327 &	0.02760 &	1.318
\\
\hline
32&0.008018 &	2.079 &	0.006678 &	1.581 &	0.009201  &	1.585
\\
\hline
\end{tabular}
\end{center}
\end{table}

In Tables \ref{NE:TRI:Case4-1}-\ref{NE:TRI:Case4-4}, we report some numerical results for the PD-WG method on the L-shaped domain $\Omega_2$ with exact solution $u=\sin(\pi x)\sin(\pi y)$ and various values of $\tau=0, 0.001, 1, 1000$ for the stabilization parameter. The convection vector is given by $\bbeta=[1, 1]$ and the reaction coefficient by $c=1$. Tables \ref{NE:TRI:Case4-1}-\ref{NE:TRI:Case4-3} show that the convergence rate for $e_h$ in the discrete $L^2$ is of ${\cal O}(h)$. Table \ref{NE:TRI:Case4-4} shows that the convergence rate for $e_h$ in the discrete $L^2$ norm is higher than ${\cal O}(h)$ when the stabilization parameter has the value $\tau=1000$. Note that the absolute error of $e_h$ in Table \ref{NE:TRI:Case4-4} with $\tau=1000$ is bigger than those in Tables \ref{NE:TRI:Case4-1}-\ref{NE:TRI:Case4-3} when compared with the result obtained using the same mesh.

  %new
\begin{table}[H]
\begin{center}
\caption{Numerical rates of convergence for exact solution $u=\sin(\pi x)\sin(\pi y)$ on the L-shaped domain $\Omega_2$: $\bbeta=[1, 1]$, $c=1$, $\tau=0$.}\label{NE:TRI:Case4-1}
\begin{tabular}{|c|c|c|c|c|c|c|}
\hline
$1/h$        & $\|e_h\| $ &  order&  $\| \lambda_0 \| $ &  order&  $\|\lambda_b\|$ &order
\\
\hline
1	&0.6665 	&&	1.9483 	&&	2.172 &
\\
\hline
2&	0.2840 	&1.231	&2.267 	&-0.2188 &3.394 	&-0.6439
\\
\hline
4	&0.1080 &	1.395 &	0.6346 &	1.837  &0.9082 	&1.902
\\
\hline
8	&0.04060 &1.412 	&0.1547 	&2.036 &0.2158 	&2.074
\\
\hline
16	&0.01904 &	1.092 	&0.03706 &	2.062	&0.05090&	2.084	
\\
\hline
32	&	0.009381 	&	1.021 &	0.009125	&	2.022 	&0.01244	&	2.033
\\
\hline
\end{tabular}
\end{center}
\end{table}
%new
\begin{table}[H]
\begin{center}
\caption{Numerical rates of convergence for exact solution $u=\sin(\pi x)\sin(\pi y)$ on the L-shaped domain $\Omega_2$: $\bbeta=[1, 1]$, $c=1$; $\tau=0.001$.}\label{NE:TRI:Case4-2}
\begin{tabular}{|c|c|c|c|c|c|c|}
\hline
$1/h$        & $\|e_h\| $ &  order&  $\| \lambda_0 \| $ &  order&  $\|\lambda_b\|$ &order
\\
\hline
1	&0.66831 	&&	1.9405 	&&2.160 &
\\
\hline
2&	0.2840 	&1.235	&2.265 &-0.2230	&3.390&	-0.6501
\\
\hline
4&	0.1079 	&1.396	&0.6342	&1.836 &0.9076	&1.901
\\
\hline
8&	0.04055 &	1.412 &	0.1547 &	2.036 &	0.2157 &2.073
\\
\hline
16&0.01902 	&1.092 &	0.03706 &	2.061 &	0.05089 	&2.084
\\
\hline
32&0.009374&	1.021&	0.009125&	2.022 &	0.01243&2.033
\\
\hline
\end{tabular}
\end{center}
\end{table}
%new
\begin{table}[H]
\begin{center}
\caption{Numerical rates of convergence for exact solution $u=\sin(\pi x)\sin(\pi y)$ on the L-shaped domain $\Omega_2$: $\bbeta=[1, 1]$, $c=1$; $\tau=1$.}\label{NE:TRI:Case4-3}
\begin{tabular}{|c|c|c|c|c|c|c|}
\hline
$1/h$        & $\|e_h\| $ &  order&  $\| \lambda_0 \| $ &  order&  $\|\lambda_b\|$ &order
\\
\hline
1&0.9362 	&&	1.076	&&0.5230 	&
\\
\hline
2&	0.3222  &	1.539	&1.774	&-0.7216 &2.616 &	-2.322
\\
\hline
4	&0.1133 &	1.507 &	0.5773 	&1.619 	&0.8230 	&1.668
\\
\hline
8&	0.03770	&1.588 	&0.1515 & 1.930 	&0.2111 	&1.963
\\
\hline
16&	0.01745 &	1.111 	&0.03689 	&2.038	&0.05067	&2.059
\\
\hline
32&	0.008610& 1.019&0.009107&	2.018&	0.01242&	2.029
\\
\hline
\end{tabular}
\end{center}
\end{table}
%new
\begin{table}[H]
\begin{center}
\caption{Numerical rates of convergence for exact solution $u=\sin(\pi x)\sin(\pi y)$ on the L-shaped domain $\Omega_2$: $\bbeta=[1, 1]$, $c=1$; $\tau=1000$.}\label{NE:TRI:Case4-4}
\begin{tabular}{|c|c|c|c|c|c|c|}
\hline
$1/h$        & $\|e_h\| $ &  order&  $\| \lambda_0 \| $ &  order&  $\|\lambda_b\|$ &order
\\
\hline
1&	1.208 	&&	0.2749 	&&	0.1019 &
\\
\hline
2&	0.5886 &	1.037 &	0.3499 &	-0.3481 &	0.3809 &	-1.903
\\
\hline
4&	0.2668 &	1.142&	0.1865 &	0.9081&	0.2370 &	0.6846
\\
\hline
8&	0.1119&	1.254 &	0.07380&	1.337 &	0.1001 &	1.243
\\
\hline
16&0.03311 &	1.756 &	0.02724 &	1.438 &	0.03748 &	1.417
\\
\hline
32&	0.009289 &	1.834&	0.008355&1.705&	0.01143&	1.714
\\
\hline
\end{tabular}
\end{center}
\end{table}

Figure \ref{ux} illustrates the numerical performance of the PD-WG method for a test problem with the following configuration: the domain is the unit square $\Omega_1$, the exact solution is $u=\sin(x)\cos(y)$, the convection vector is $\bbeta=[y-0.5, -x+0.5]$, the reaction coefficient $c=1$, and the stabilizer parameters $\tau=0, 1, 10000$. Figure \ref{ux} shows that the convergence rate for $e_h$ in the discrete $L^2$ norm is of order ${\cal O}(h^{0.9})$ which is a bit lower than the expected optimal order ${\cal O}(h)$ when $\tau=0$ (left figure) and $\tau=1$ (middle figure) are employed. We conjecture that the slight deterioration on the convergence rate is caused by the rotational nature of the flow. The right figure in Figure \ref{ux} indicates that the convergence rate for $e_h$ is of ${\cal O}(h^{1.3})$ when $\tau=10000$, which is better than the theoretical prediction.

\begin{figure}[h]
\centering
\begin{tabular}{cc}
\resizebox{1.5in}{1.5in}{\includegraphics{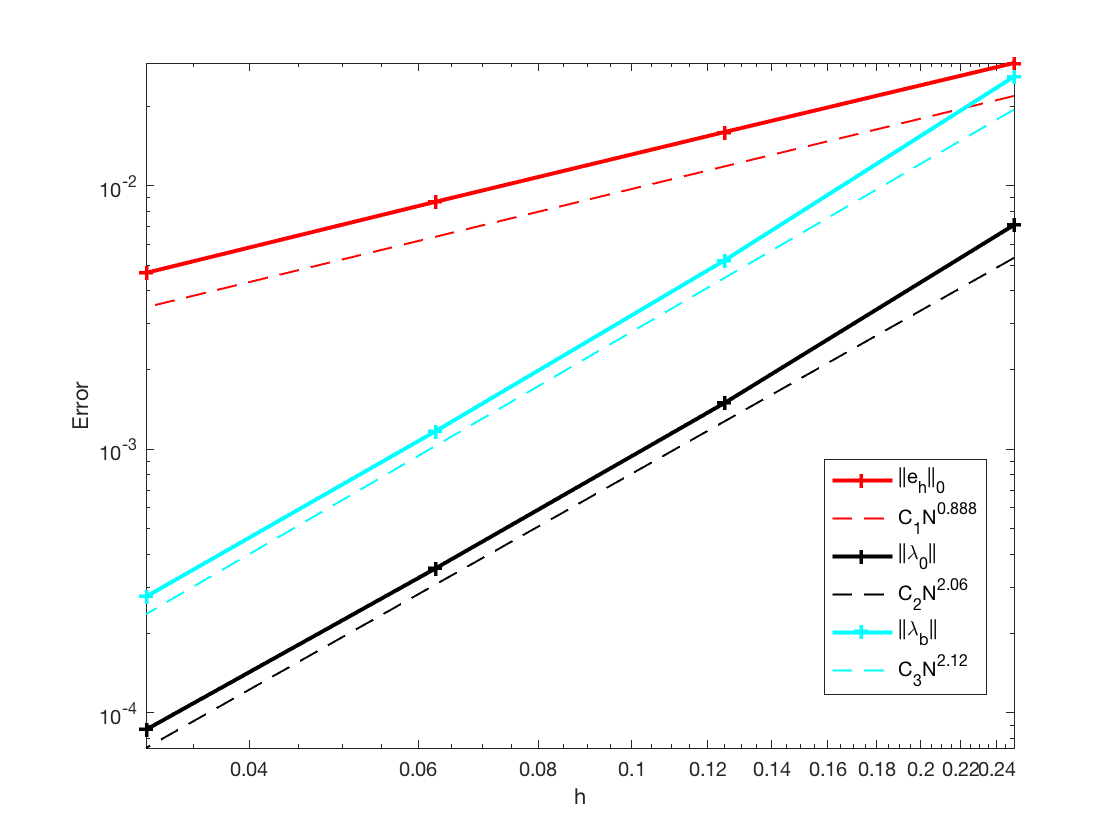}}
\resizebox{1.5in}{1.5in}{\includegraphics{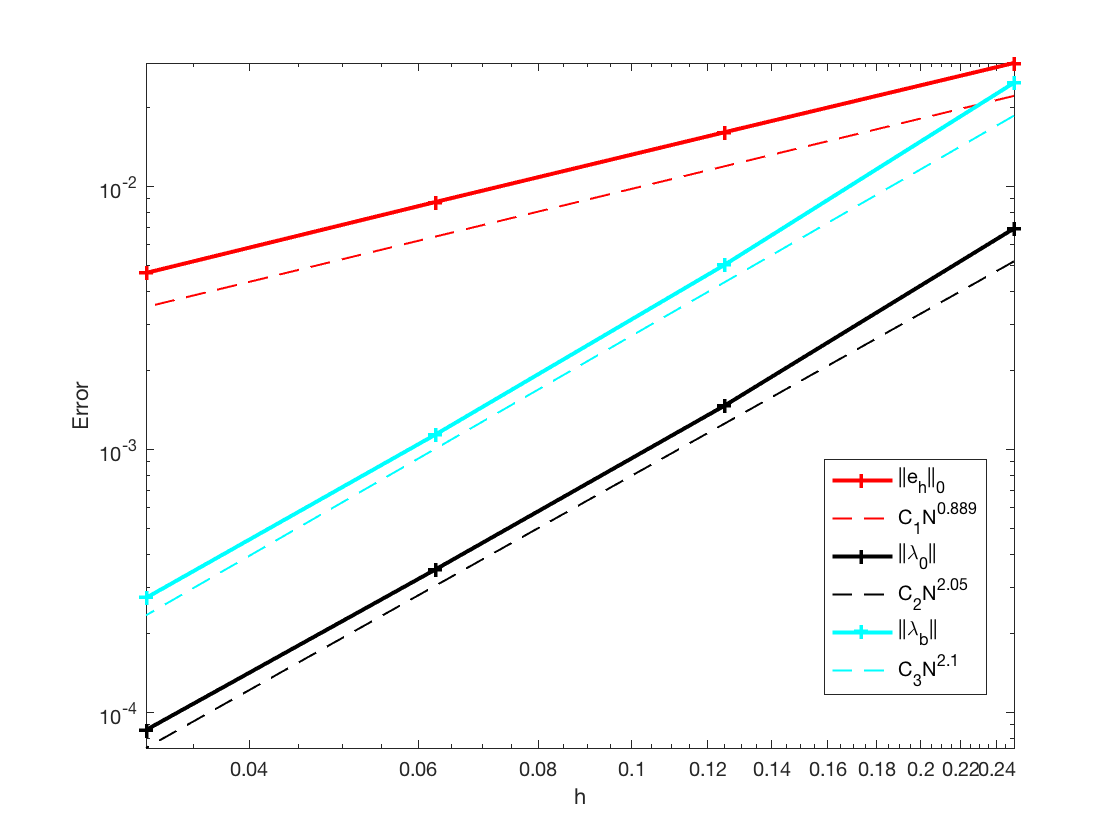}}
\resizebox{1.5in}{1.5in}{\includegraphics{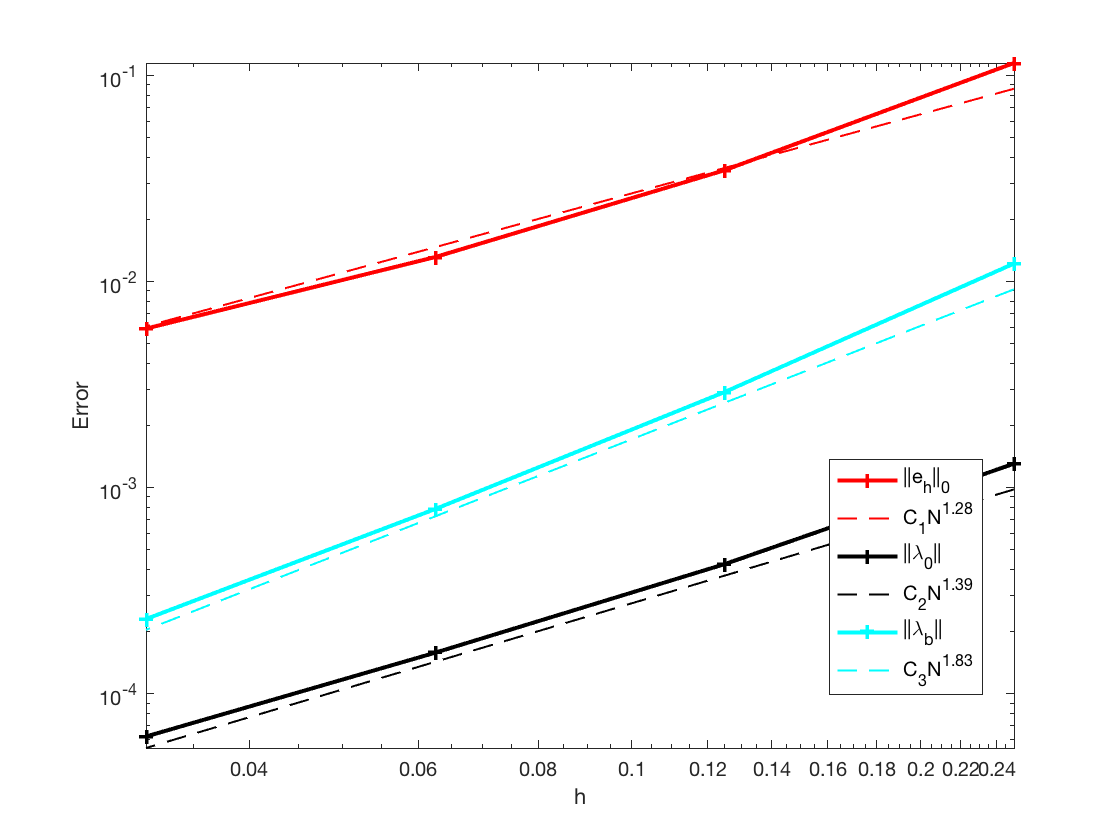}}
\end{tabular}
\caption{Numerical rates of convergence for exact solution $u=\sin(x)\cos(y)$ on an unit square domain $\Omega_1$: $\bbeta=[y-0.5, -x+0.5]$ and $c=1$, $\tau=0$ (left), $\tau=1$ (middle), $\tau=10000$ (right).}
\label{ux}
 \end{figure}

Tables \ref{NE:TRI:Case10-1}-\ref{NE:TRI:Case10-2} show the numerical results on the L-shaped domain $\Omega_2$ for the stabilization parameters $\tau=1$ and $\tau=0$. The exact solution is given by $u=\sin(x)\cos(y)$. The convection vector is given by $\bbeta=[y-1, -x+1]$ and the reaction coefficient is chosen as $c=1$. Tables \ref{NE:TRI:Case10-1}-\ref{NE:TRI:Case10-2} indicate that the convergence rate for $e_h$ in the discrete $L^2$ norm is of ${\cal O}(h^{0.9})$. This slight deterioration of the convergence rate might be caused by the rotational nature of the flow.

%new
\begin{table}[H]
\begin{center}
\caption{Numerical rates of convergence for exact solution $u=\sin(x)\cos(y)$ on the L-shaped domain $\Omega_2$: $\bbeta=[y-1, -x+1]$, $c=1$, $\tau=1$.}\label{NE:TRI:Case10-1}
\begin{tabular}{|c|c|c|c|c|c|c|}
\hline
$1/h$        & $\|e_h\| $ &  order&  $\| \lambda_0 \| $ &  order&  $\|\lambda_b\|$ &order
\\
\hline
1& 0.2577 	&&1.829	&&	3.933 &
\\
\hline
2	&0.1345 	&0.9376 &	0.4039 &	2.179 	&0.7490 &	2.393
\\
\hline
4&	0.06931&	0.9570 &	0.07443 &	2.440&	0.1299&	2.528
\\
\hline
8&	0.03677 &	0.9143 &	0.01586 &	2.231&	0.02558  &	2.344
\\
\hline
16&	0.01970	&0.9006 &	3.71E-03	&2.096 	&0.005621&	2.186
\\
\hline
32&	0.01050 & 0.9072 & 9.03E-04&2.038 &	0.001318& 2.093
\\
\hline

\end{tabular}
\end{center}
\end{table}
%new
\begin{table}[H]
\begin{center}
\caption{Numerical rates of convergence for exact solution $u=\sin(x)\cos(y)$ on the L-shaped domain $\Omega_2$: $\bbeta=[y-1, -x+1]$, $c=1$, $\tau=0$.}\label{NE:TRI:Case10-2}
\begin{tabular}{|c|c|c|c|c|c|c|}
\hline
$1/h$        & $\|e_h\| $ &  order&  $\| \lambda_0 \| $ &  order&  $\|\lambda_b\|$ &order
\\
\hline
1&	0.1779	&&3.173 &&	6.112 	&
\\
\hline
2&	0.1147 &	0.6330 &	0.4859 &	2.707 &	0.8792 	&2.797
\\
\hline
4&	0.06512 &	0.8168&	0.07896 &	2.622&	0.1396&	2.655
\\
\hline
8&	0.03560 &	0.8714 &	0.01633 &	2.273 &	0.02684&	2.378
\\
\hline
16&0.01929 &	0.8843 &	0.003771 &	2.115 &	0.005791 &	2.212
\\
\hline
32&0.01033 &	0.9002&	9.13E-04	&2.047&	0.001341 & 2.111
\\
\hline
\end{tabular}
\end{center}
\end{table}

In Figure \ref{ux1}, we present some numerical results on the cracked square domain $\Omega_3$. The exact solution is chosen to be $u=\sin(x)\sin(y)$. The convection vector is $\bbeta=[y, -x]$ and the reaction coefficient is $c=1$.  Figure \ref{ux1} indicates that the convergence rate for $e_h$ in the discrete $L^2$ norm arrives at an order of ${\cal O}(h^{1.1})$ when the stabilization parameters $\tau=1$ and $\tau=0$ are employed.

 \begin{figure}[h]
\centering
\begin{tabular}{cc}
\resizebox{2.4in}{2.2in}{\includegraphics{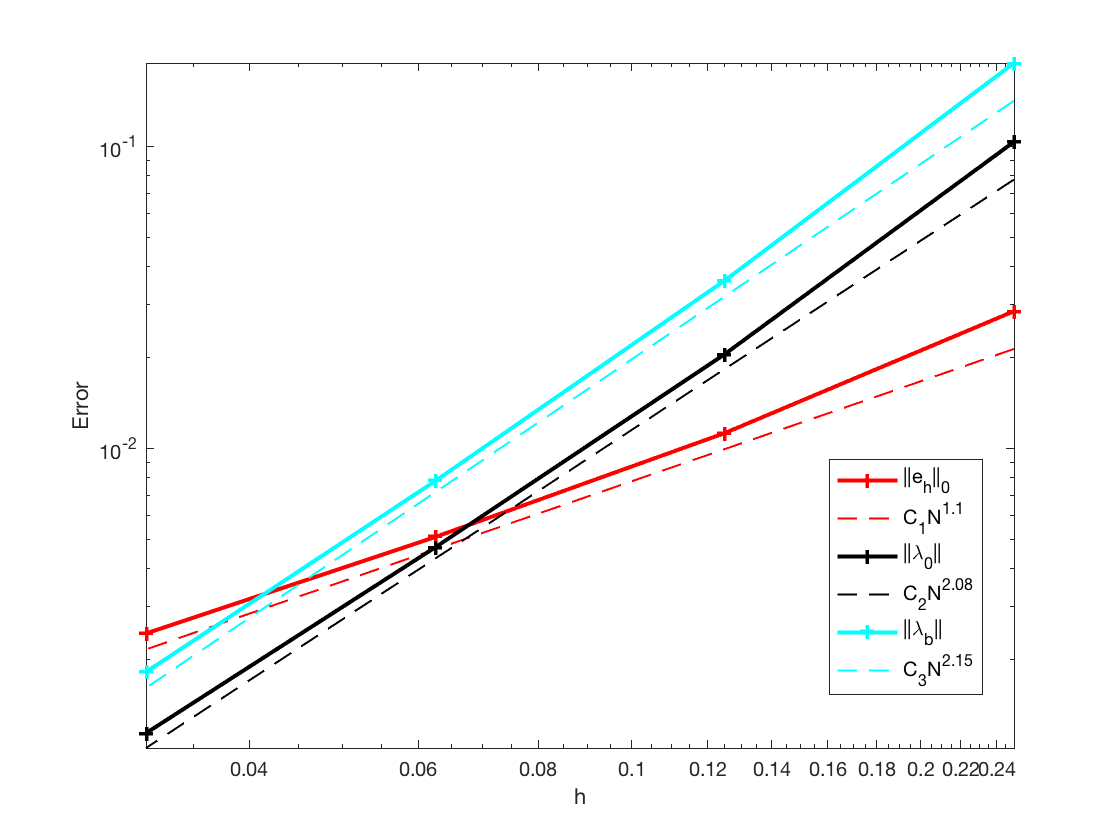}}
\resizebox{2.4in}{2.2in}{\includegraphics{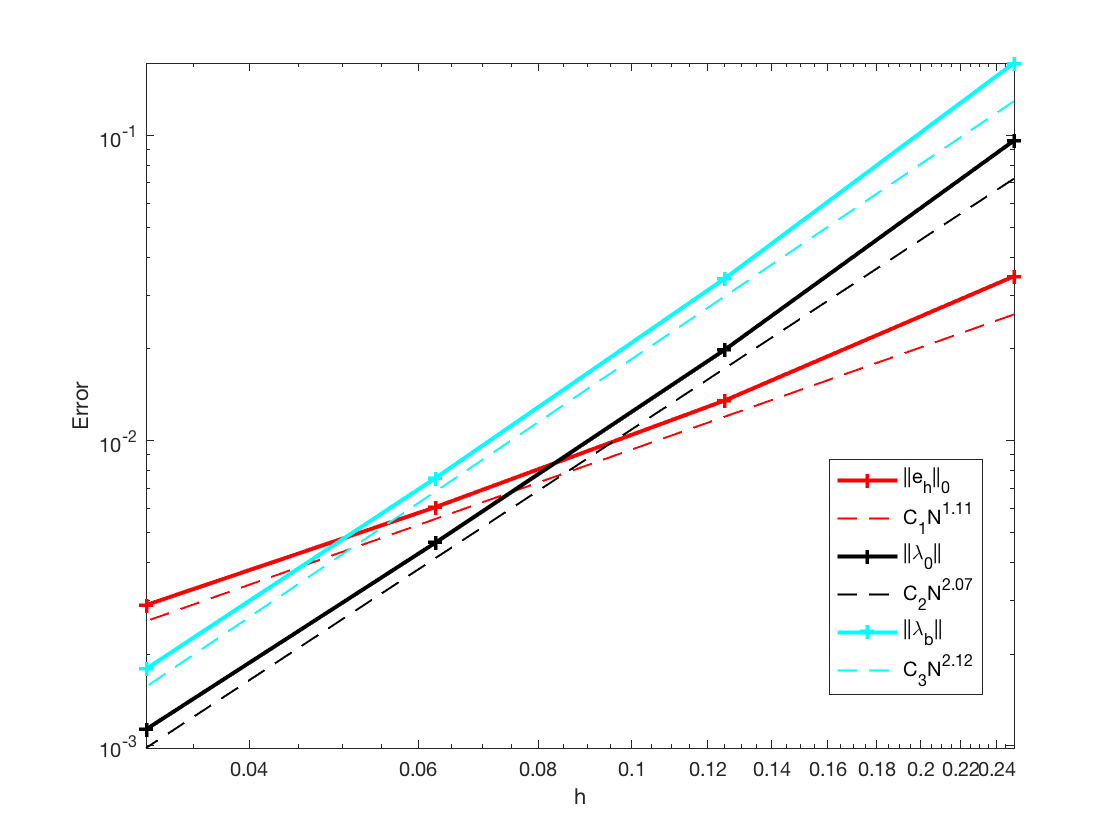}}
\end{tabular}
\caption{Numerical rates of convergence for exact solution $u=\sin(x)\sin(y)$ on the cracked square domain $\Omega_3$: $\bbeta=[y, -x]$, $c=1$, $\tau=0$ (left), $\tau=1$ (right).}
\label{ux1}
 \end{figure}

Tables \ref{NE:TRI:Case11-1}-\ref{NE:TRI:Case11-2} illustrate the numerical results on the cracked square domain $\Omega_3$ when the convection vector is given by $\bbeta=[y, -x]$ and the reaction coefficient by $c=1$. The exact solution is given by $u=\sin(\pi x)\cos(\pi y)$. The numerical results in Tables \ref{NE:TRI:Case11-1}-\ref{NE:TRI:Case11-2} show that the convergence rate for $e_h$ in the discrete $L^2$ norm is of ${\cal O}(h)$ when $\tau=1$ and $\tau=0$. 
 %new
 \begin{table}[H]
\begin{center}
\caption{Numerical rates of convergence for exact solution $u=\sin(\pi x)\cos(\pi y)$ on the cracked square domain $\Omega_3$: $\bbeta=[y, -x]$, $c=1$, $\tau=1$.}\label{NE:TRI:Case11-1}
\begin{tabular}{|c|c|c|c|c|c|c|}
\hline
$1/h$        & $\|e_h\| $ &  order&  $\| \lambda_0 \| $ &  order&  $\|\lambda_b\|$ &order
\\
\hline
1&0.5008 &&	2.009 &&	3.088 &
\\
\hline
2&	0.2900 &	0.7881 &	1.0270 &	0.9680 &	1.615 &	0.9350
\\
\hline
4&	0.1555&	0.8994 &	0.3126 &	1.716 &	0.4735 &1.770
\\
\hline
8&	0.07209 &	1.109 &	0.06884 &	2.183 &	0.09902 &	2.257
\\
\hline
16&	0.03540 &	1.026 &	0.01601&	2.105&	0.02202 &	2.169
\\
\hline
32&	0.01765&	1.004&	0.003888&	2.041&0.005219&	2.077
\\
\hline
\end{tabular}
\end{center}
\end{table}
%new
\begin{table}[H]
\begin{center}
\caption{Numerical rates of convergence for exact solution $u=\sin(\pi x)\cos(\pi y)$ on the cracked square domain $\Omega_3$: $\bbeta=[y, -x]$, $c=1$, $\tau=0$.}\label{NE:TRI:Case11-2}
\begin{tabular}{|c|c|c|c|c|c|c|}
\hline
$1/h$        & $\|e_h\| $ &  order&  $\| \lambda_0 \| $ &  order&  $\|\lambda_b\|$ &order
\\
\hline
1&	0.3958 &&	3.348 && 4.985 &
\\
\hline
2&	0.2562 &	0.6275 &	1.417 &	1.240 &	2.207 &	1.176
\\
\hline
4&	0.1477 &	0.7943 &	0.3559 &	1.994 &	0.5442 &	2.020
\\
\hline
8&	0.06988 &	1.080 &	0.07175 &	2.310 &	0.1042&	2.385
\\
\hline
16&0.03472 &	1.009 &	0.01628 &	2.140 &	0.02250 &	2.211
\\
\hline
32&	0.01740&	0.9972&	0.003922&	2.053&	0.005271&	2.094
\\
\hline
\end{tabular}
\end{center}
\end{table}

Figure \ref{ux2} illustrates the numerical performance of the PD-WG method on the unit square domain $\Omega_1$ for the exact solution $u=\sin(x)\cos(y)$. The convection vector $\bbeta(x, y)$ is piece-wisely defined in the sense that $\bbeta(x, y)=[y, -x]$ for $y<1-x$ and  $\bbeta(x, y)=[y-1, 1-x]$ otherwise; and the reaction coefficient is $c=1$. The stabilization parameter assumes the values of $\tau=1$ and $\tau=0$. The numerical results in Figure \ref{ux2} show the convergence rate for $e_h$ in the discrete $L^2$ norm arrives at an optimal order of ${\cal O}(h)$, which is in good consistency with the theory.

\begin{figure}[h]
\centering
\begin{tabular}{cc}
\resizebox{2.4in}{2.2in}{\includegraphics{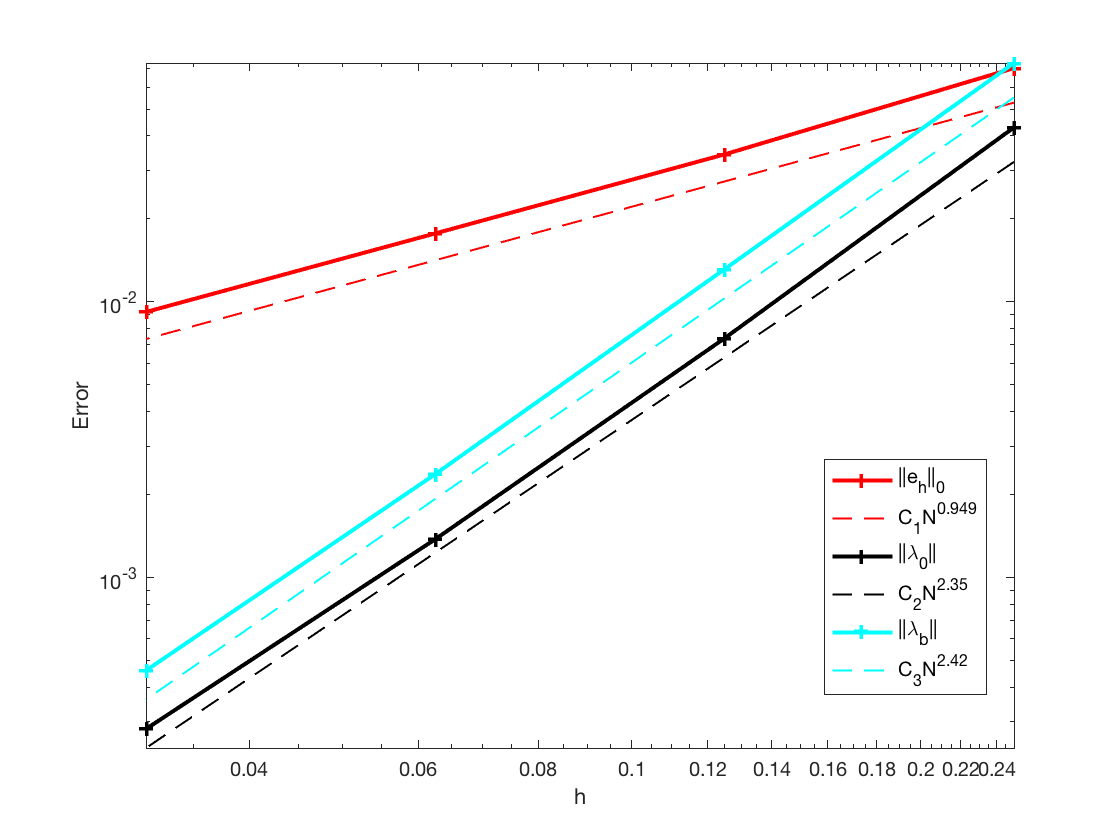}}
\resizebox{2.4in}{2.2in}{\includegraphics{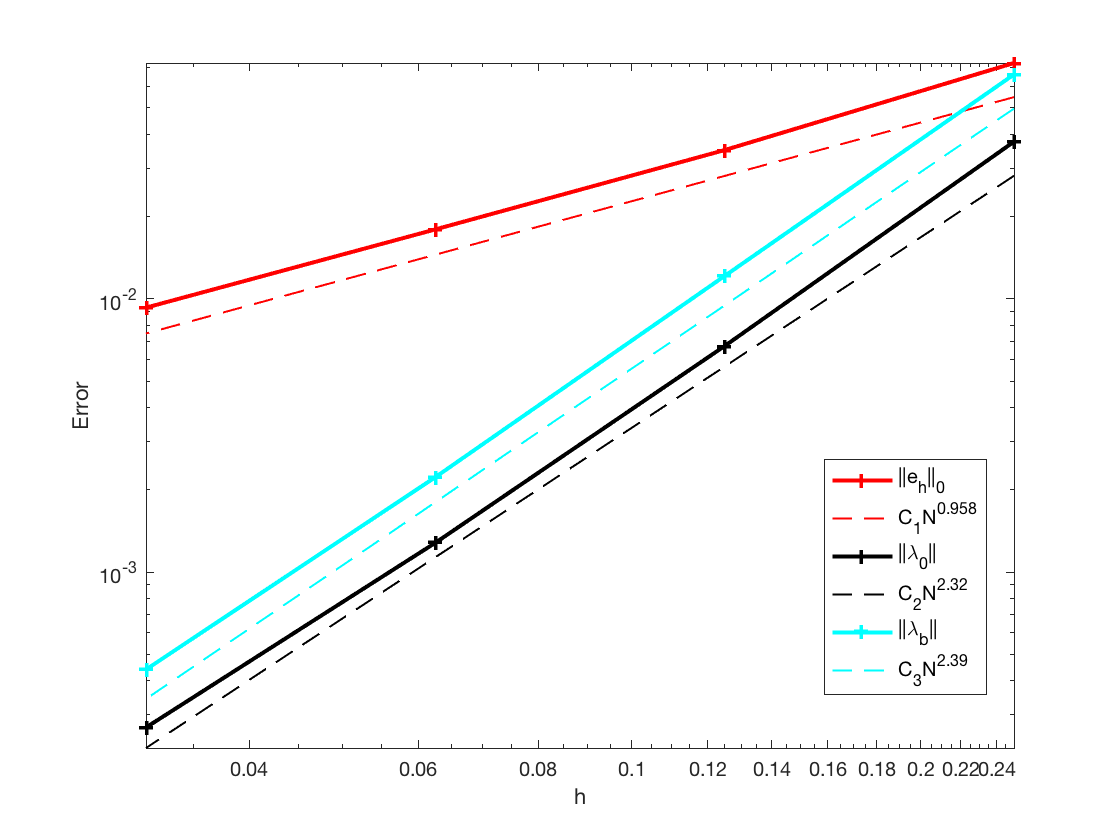}}
\end{tabular}
\caption{Numerical rates of convergence for exact solution $u=\sin(x)\cos(y)$ on an unit square domain $\Omega_1$: $\bbeta(x,  y)=[y, -x]$ for $y<1-x$ and $\bbeta=[y-1, 1-x]$ otherwise, $c=1$, $\tau=0$ (left), $\tau=1$ (right).}
\label{ux2}
 \end{figure}

Tables \ref{NE:TRI:Case8-1}-\ref{NE:TRI:Case8-4} illustrate the performance of the PD-WG method for the model problem \eqref{model} with piecewise constant coefficients. The exact solution is chosen as $u=\sin(\pi x)\cos(\pi y)$, and the domain is given by either the unit square $\Omega_1$ or the L-shaped domain $\Omega_2$. The convection vector  $\bbeta(x, y)$ is piece-wisely defined as follows: $\bbeta(x, y)=[1, -1]$ if $y<1-x$ and $\bbeta(x, y)=[-1, 1]$ elsewhere. The reaction coefficient is given by $c=1$. The stabilization parameter has values $\tau=1$ and $\tau=0$. Tables \ref{NE:TRI:Case8-1}-\ref{NE:TRI:Case8-2} show that the convergence rate for $e_h$ in the discrete $L^2$ norm on the unit square domain $\Omega_1$ is of ${\cal O}(h)$, which is consistent with the theory. The numerical results in Tables \ref{NE:TRI:Case8-3}-\ref{NE:TRI:Case8-4} suggest that the convergence for $e_h$ in the discrete $L^2$ norm on the L-shaped domain $\Omega_2$ is of order ${\cal O}(h)$.

\begin{table}[H]
\begin{center}
\caption{Numerical rates of convergence for exact solution $u=\sin(\pi x)\cos(\pi y)$ on an unit square domain $\Omega_1$: $\bbeta(x, y)=[1, -1]$ if$y<1-x$ and $\bbeta(x, y)=[-1, 1]$ otherwise, $c=1$, $\tau=1$.}\label{NE:TRI:Case8-1}
\begin{tabular}{|c|c|c|c|c|c|c|}
\hline
$1/h$        & $\|e_h\| $ &  order&  $\| \lambda_0 \| $ &  order&  $\|\lambda_b\|$ &order
\\
\hline
1&	0.1411&&	0.6130 &&	0.7236	&
\\
\hline
2&	0.1427 &	-0.01700&	0.4904 &	0.3219 &	0.6096 	&0.2472
\\
\hline
4&	0.07178 &	0.9916 &	0.1188&	2.046 &	0.1388 	&2.134
\\
\hline
8&	0.03058 &	1.231 &	3.33E-02&	1.833 &	0.03937 &	1.818
\\
\hline
16&	0.01446 &	1.080 &	8.44E-03	&1.981 	&9.86E-03&	1.997
\\
\hline
32&	0.007131&	1.020 	&2.11E-03&	2.003 &	2.43E-03	&2.018
\\
\hline
\end{tabular}
\end{center}
\end{table}

\begin{table}[H]
\begin{center}
\caption{Numerical rates of convergence for exact solution $u=\sin(\pi x)\cos(\pi y)$ on an unit square domain $\Omega_1$: $\bbeta(x, y)=[1, -1]$ if$y<1-x$ and $\bbeta(x, y)=[-1, 1]$ otherwise, $c=1$, $\tau=0$.}\label{NE:TRI:Case8-2}
\begin{tabular}{|c|c|c|c|c|c|c|}
\hline
$1/h$        & $\|e_h\| $ &  order&  $\| \lambda_0 \| $ &  order&  $\|\lambda_b\|$ &order
\\
\hline
1&0.1953 &&	0.7864 	&&1.058 &
\\
\hline
2&	0.1136 &	0.7815 &	0.5297 &	0.5703 &0.6675 	&0.6650
\\
\hline
4&	0.06042 &	0.9109 &	0.1236 &	2.099 &	0.1466 &	2.187
\\
\hline
8&	0.02824 &	1.097 &	0.03433 &	1.849 &	0.04123&	1.830
\\
\hline
16&	0.01383 &	1.030 &	0.008656 &	1.988&	0.01026&	2.007
\\
\hline
32&0.006899 &	1.003&	0.002155 &	2.006 &	0.002524 &	2.023
\\
\hline
\end{tabular}
\end{center}
\end{table}

\begin{table}[H]
\begin{center}
\caption{Numerical rates of convergence for exact solution $u=\sin(\pi x)\cos(\pi y)$ on the L-shaped domain $\Omega_2$: $\bbeta(x, y)=[1, -1]$ if $y<1-x$ and $\bbeta(x, y)=[-1, 1]$ otherwise; $c=1$; $\tau=1$.}\label{NE:TRI:Case8-3}
\begin{tabular}{|c|c|c|c|c|c|c|}
\hline
$1/h$        & $\|e_h\| $ &  order&  $\| \lambda_0 \| $ &  order&  $\|\lambda_b\|$ &order
\\
\hline
1&	0.4095 	&&2.047 	&&	2.881&
\\
\hline
2&	0.2575 &	0.6693	&1.484 	&0.4640 &	2.180 	&0.4020
\\
\hline
4&	0.1138 &	1.178 &	0.5139 &	1.530 &	0.7252&	1.588
\\
\hline
8&	0.04564 &	1.319 &	0.1377 &	1.900&	0.1912 &	1.923
\\
\hline
16&	0.02152&	1.085 &	0.03337 &	2.045 &	0.04585&	2.060
\\
\hline
32& 0.01060 & 1.022 &  0.008210& 2.0231& 0.01122& 2.031
\\
\hline
\end{tabular}
\end{center}
\end{table}

\begin{table}[H]
\begin{center}
\caption{Numerical rates of convergence for exact solution $u=\sin(\pi x)\cos(\pi y)$ on the L-shaped domain $\Omega_2$: $\bbeta(x, y)=[1, -1]$ if $y<1-x$ and $\bbeta(x, y)=[-1, 1]$ otherwise, $c=1$, $\tau=0$.}\label{NE:TRI:Case8-4}
\begin{tabular}{|c|c|c|c|c|c|c|}
\hline
$1/h$        & $\|e_h\| $ &  order&  $\| \lambda_0 \| $ &  order&  $\|\lambda_b\|$ &order
\\
\hline
1	&0.3716 	&&	3.493 	&&	5.315	&
\\
\hline
2&	0.2102 &	0.8219 &	1.907 &	0.8731 &	2.835 	&0.9066
\\
\hline
4&	0.1047 &	1.006 &	0.5779 &	1.723 &	0.8193 &	1.791
\\
\hline
8&	0.04487  &	1.222&	0.1414&	2.031 &	0.1967 &	2.059
\\
\hline
16&	0.02160 &	1.055&	0.03373 &	2.067 &	0.04639 &	2.084
\\
\hline
32	&0.01070&	1.013&	0.008287&	2.025&	0.01133 &	2.034
\\
\hline
\end{tabular}
\end{center}
\end{table}

%%%%%%%%%%%%%%%%
%%%%%%%%%%%%%%%%

Fig. \ref{ux3} illustrates the numerical performance of the PD-WG on the unit square domain $\Omega_1$. The  convection vector is given by $\bbeta=(\cos(\frac{\pi}{6}), \sin(\frac{\pi}{6}))=(b_1, b_2)$ and reaction coefficient is $c=0$. The exact solution is chosen as $u=\left((y-\frac{b_2}{b_1}x-\frac{1}{2})^2+\frac{1}{10}\right)^{-1}$. The stabilization parameter assumes the values of $\tau=1$ and $\tau=0$. The numerical results in Fig. \ref{ux3} suggest a convergence of $e_h$ in the discrete $L^2$ norm at the rate of ${\cal O}(h^{1.3})$, which outperforms the expected optimal order of ${\cal O}(h)$.

\begin{figure}[h]
\centering
\begin{tabular}{cc}
\resizebox{2.4in}{2.2in}{\includegraphics{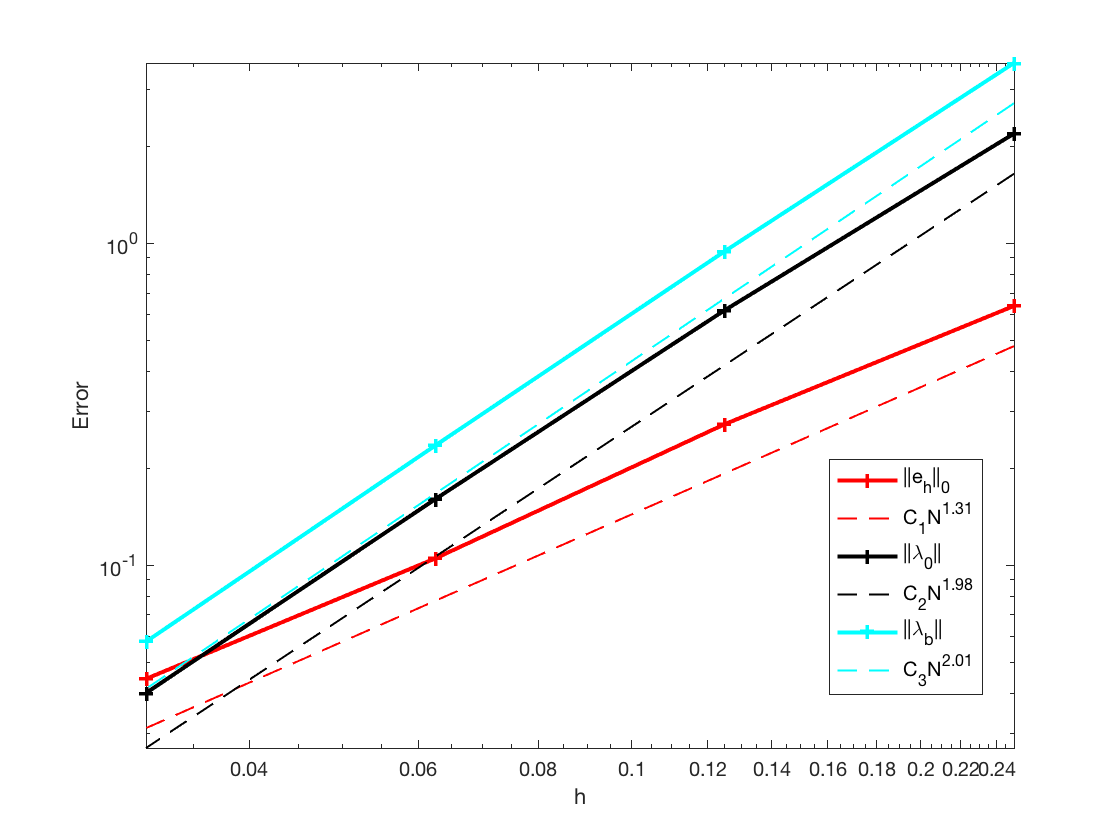}}
\resizebox{2.4in}{2.2in}{\includegraphics{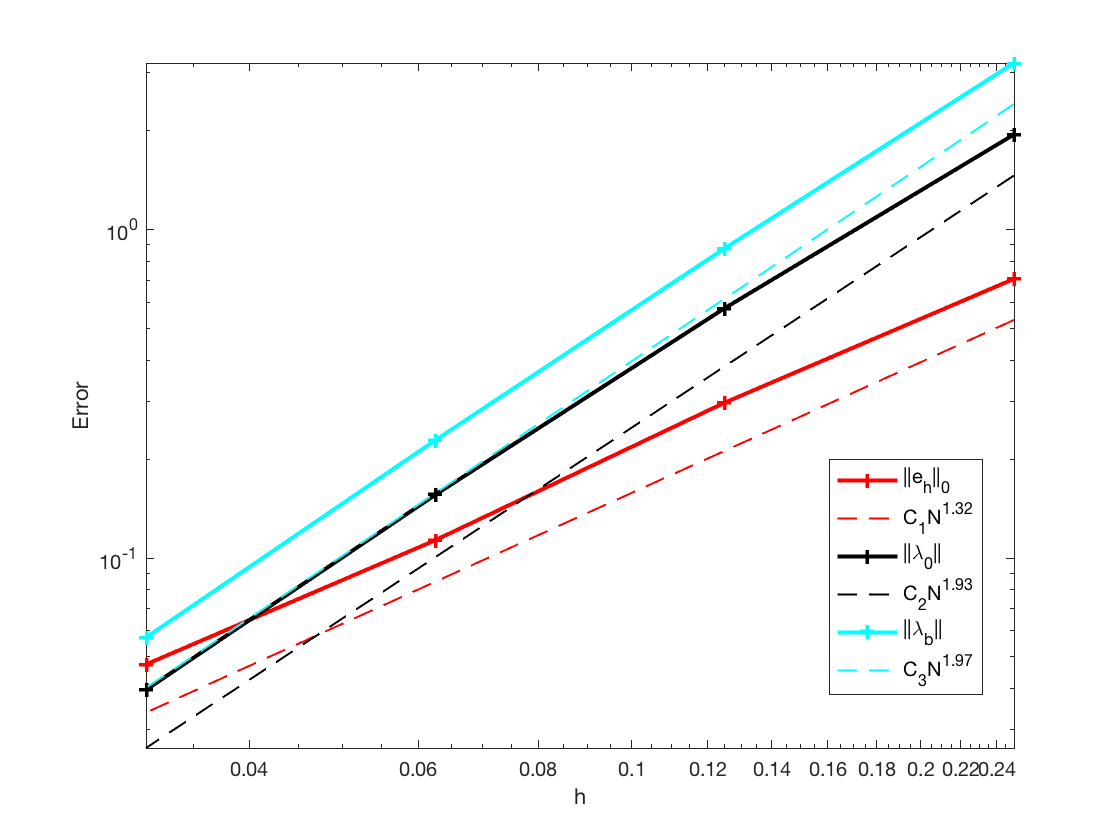}}
\end{tabular}
\caption{Numerical rates of convergence for exact solution $u=\left((y-\frac{b_2}{b_1}x-\frac{1}{2})^2+\frac{1}{10}\right)^{-1}$ on the unit square domain $\Omega_1$: $\bbeta=(\cos(\frac{\pi}{6}), \sin(\frac{\pi}{6}))=(b_1, b_2)$, $c=0$, $\tau=0$ (left), $\tau=1$ (right).}
\label{ux3}
\end{figure}

Figure \ref{ux4} shows the numerical results on the unit square domain $\Omega_1$. The convection vector is given by $\bbeta=(\cos(\frac{\pi}{6}), \sin(\frac{\pi}{6}))=(b_1, b_2)$ and the reaction coefficient is given by $c=0$. The exact solution is
\begin{equation}\label{NE:mytest}
u= \left\{
\begin{array}{cc}
  \left((y-\frac{b_2}{b_1}x-\frac{1}{2})^2+\frac{1}{10}\right)^{-1},\quad & y\geq \frac{b_2}{b_1}x;   \\
\frac{20}{7}, \quad & y<\frac{b_2}{b_1}x.   \\
\end{array}
\right.
\end{equation}
The stabilization parameter assumes the values of $\tau=1$ and $\tau=0$. 
The numerical results in Fig. \ref{ux4} suggest a convergence rate of order ${\cal O}(h^{1.3})$ for $e_h$ in the discrete $L^2$ norm, which is better than the theoretical result of ${\cal O}(h)$.

% done the new
\begin{figure}[h]
\centering
\begin{tabular}{cc}
\resizebox{2.4in}{2.2in}{\includegraphics{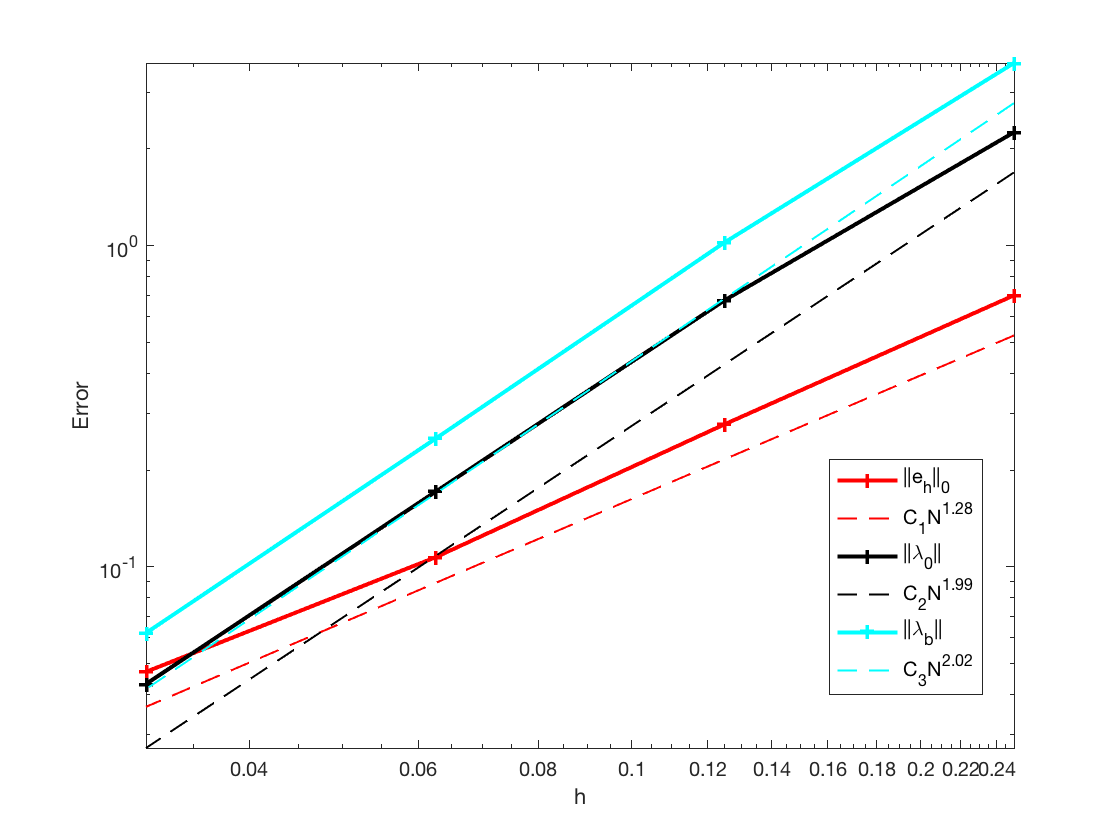}}
\resizebox{2.4in}{2.2in}{\includegraphics{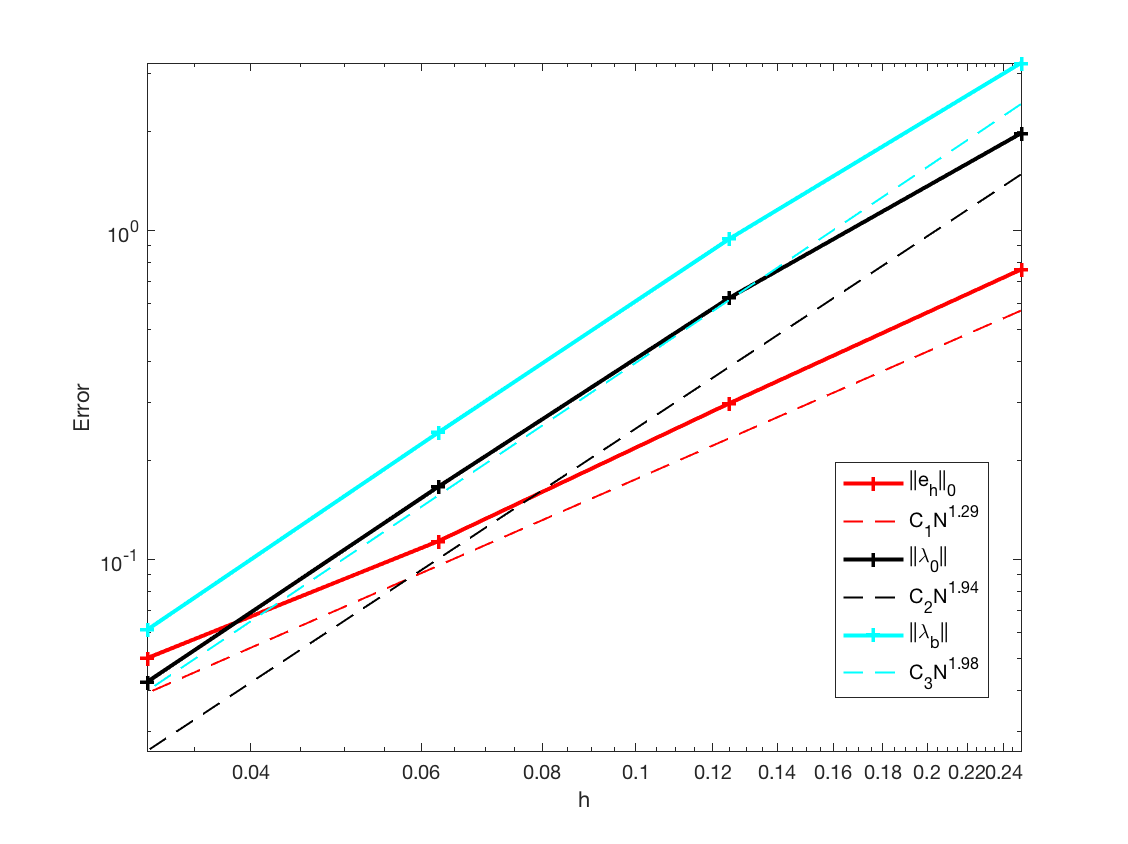}}
\end{tabular}
\caption{Numerical rates of convergence on an unit square domain $\Omega_1$ with discontinuous solution given by \eqref{NE:mytest}: $\bbeta=[\cos(\frac{\pi}{6}), \sin(\frac{\pi}{6})]=(b_1, b_2)$, $c=0$, $\tau=0$ (left), $\tau=1$ (right).}
\label{ux4}
\end{figure}

\bigskip

The rest of the numerical tests shall assume no explicit formulation on the exact solution for the linear transport problem (\ref{model}). The plot of the numerical solution $u_h$ arising from the PD-WG scheme (\ref{32})-(\ref{2}) will be shown for each numerical experiment. To produce a smooth plot, we occasionally apply a post-processing technique to generate a post-processed approximation for the plotting purpose. The post-processed approximation has values on the vertices and the midpoint of each edge for each triangular element $T\in {\cal T}_h$. The value of the post-processed approximation at each vertex point is calculated as the simple average of $u_h$ on all the elements sharing the same vertex. Similarly, the value of the post-processed approximation at the midpoint of each edge is computed as the simple average of $u_h$ on the elements sharing the same edge.

Figures \ref{u=1-1}-\ref{u=1-2} show the plots of the numerical solution $u_h$ arising from the PD-WG scheme (\ref{32})-(\ref{2}) on the unit square domain $\Omega_1$ with and without using the post-processing technique. The configuration of the test problem is as follows: the convection vector is $\bbeta=[1, -1]$; the reaction coefficient is $c=0$; the stabilization parameter is $\tau=0$; the load function is $f=0$; the inflow boundary data is given by $g=1$ on the inflow boundary edge $\{0\}\times(0,1)$ and by $g=-1$ on the inflow boundary edge $(0,1)\times\{1\}$. The exact solution is known to be $u=1$ for $x<1-y$ and $u=-1$ otherwise. The left ones in Figures \ref{u=1-1}-\ref{u=1-2} are the surface plots for the numerical solution and the right ones are the contour plots of the numerical solution. Figures \ref{u=1-1}-\ref{u=1-2} show that the numerical solution $u_h$ obtained from the PD-WG scheme (\ref{32})-(\ref{2}) is consistent with the exact solution $u$ of the linear transport problem (\ref{model}).

\begin{figure}[h]
\centering
\begin{tabular}{cc}
\resizebox{2.4in}{2.1in}{\includegraphics{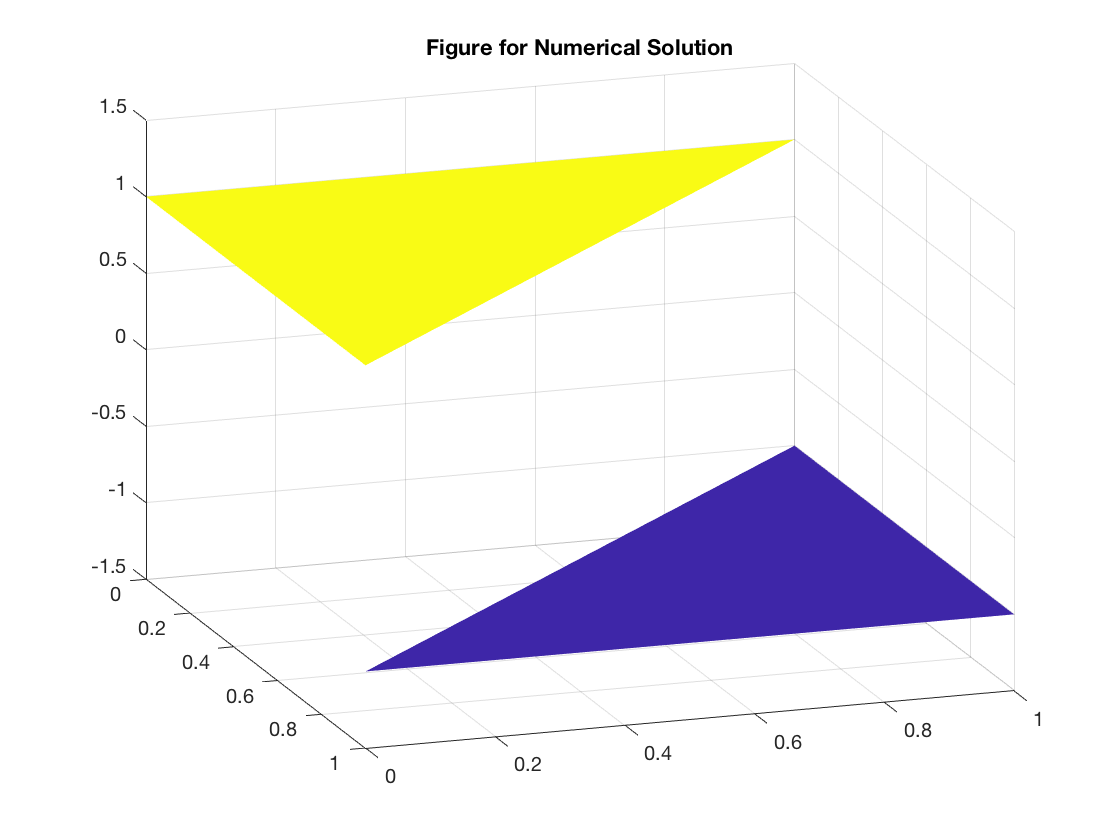}}
\resizebox{2.4in}{2.1in}{\includegraphics{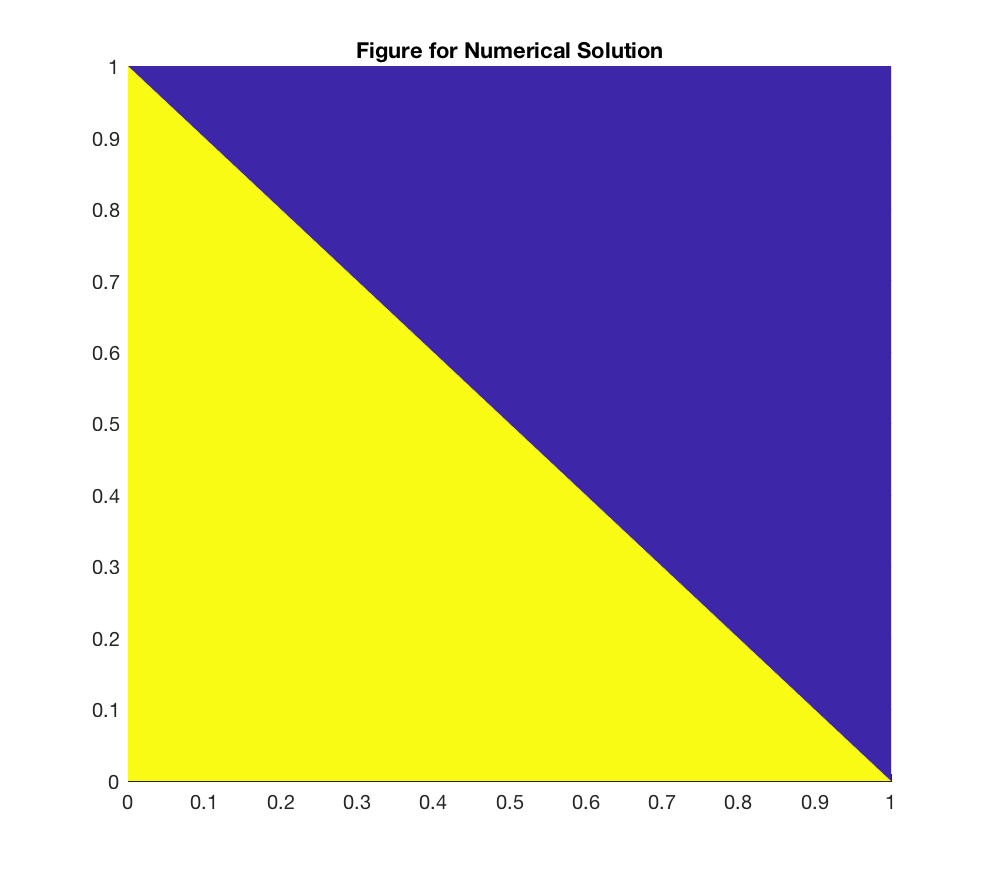}}
\end{tabular}
\caption{(without post-processing) Plots of the numerical solution $u_h$ on the unit square domain $\Omega_1$: $\bbeta=[1, -1]$, $c=0$, $\tau=0$, and discontinuous Dirichlet data at inflow boundary. Surface plot (left); contour plot (right).}
\label{u=1-1}
\end{figure}

\begin{figure}[h]
\centering
\begin{tabular}{cc}
\resizebox{2.4in}{2.1in}{\includegraphics{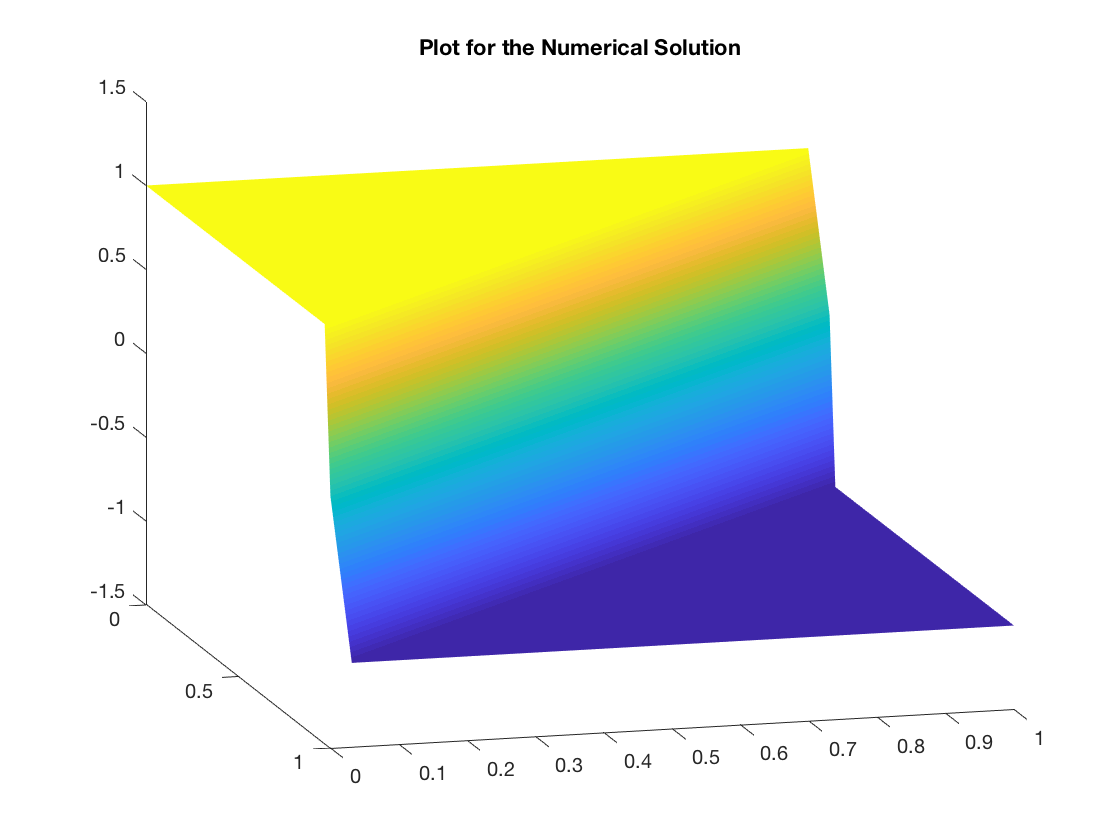}}
\resizebox{2.4in}{2.1in}{\includegraphics{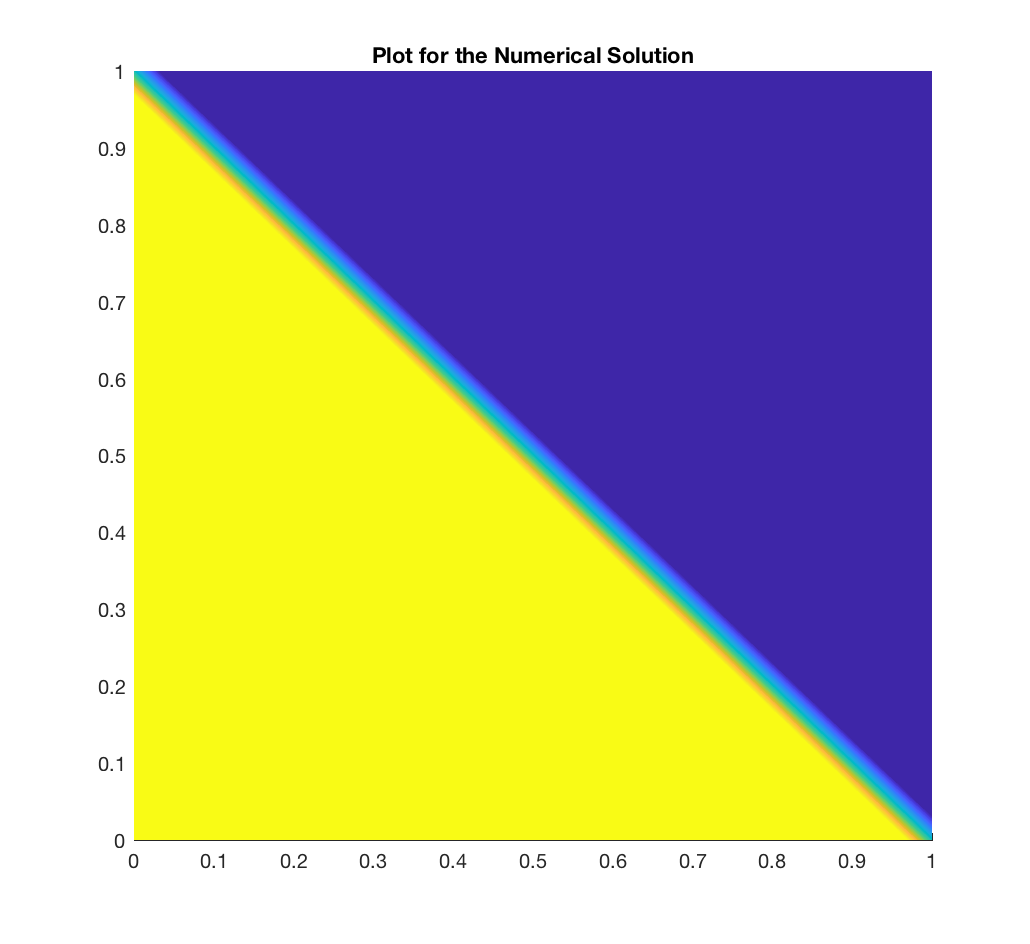}}
\end{tabular}
\caption{(with post-processing) Plots of the numerical solution $u_h$ on the unit square domain $\Omega_1$: $\bbeta=[1, -1]$, $c=0$, $\tau=0$, and discontinuous Dirichlet data at inflow boundary. Surface plot (left); contour plot (right).}
\label{u=1-2}
\end{figure}

In what follows of this section, all the plots are based on the post-processed approximations for the PD-WG solution $u_h$. Fig. \ref{f=1} illustrates the contour plots of the numerical solution $u_h$ on the unit square domain $\Omega_1$ with the following configuration: the convection vector $\bbeta(x, y)$ is piece-wisely defined such that $\bbeta(x, y)=[y+1, -x-1]$ if $y<1-x$ and $\bbeta(x, y)=[y-2, 2-x]$ otherwise, the reaction coefficient is $c=0$, and the inflow boundary data is given by $g=\cos(5y)$. The stabilization parameter is set as $\tau=0$. Fig. \ref{f=1} presents the contour plots for the post-processed numerical solution $u_h$ with the load function $f=1$ and $f=0$, respectively.

\begin{figure}[h]
\centering
\begin{tabular}{cc}
\resizebox{2.4in}{2.1in}{\includegraphics{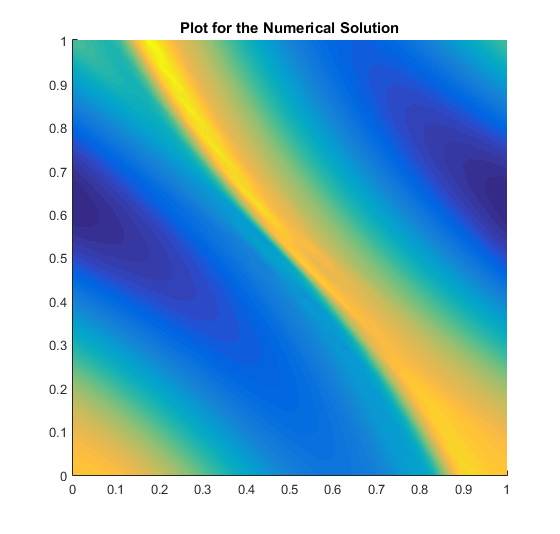}}
\resizebox{2.4in}{2.1in}{\includegraphics{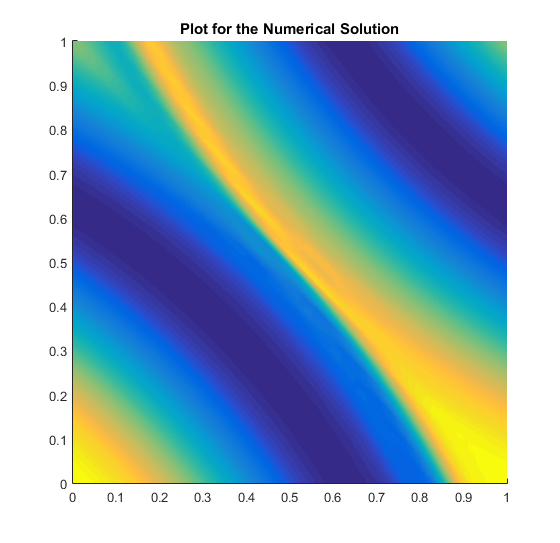}}
\end{tabular}
\caption{(with post-processing) Contour plots of the numerical solution $u_h$ on the unit square domain $\Omega_1$: $\bbeta(x, y)=[y+1, -x-1]$ if $y<1-x$ and $\bbeta(x, y)=[y-2, 2-x]$ elsewhere, $c=0$, the inflow boundary data $g=\cos(5y)$, $\tau=0$. The load function $f=1$ (left) and the load function $f=0$ (right).}
\label{f=1}
\end{figure}

Fig. \ref{f=1-C} shows the contour plots of the numerical solution $u_h$ on the unit square domain $\Omega_1$ for a test problem with the following configuration: the convection vector $\bbeta=[y-0.5, -x+0.5]$, the reaction coefficient $c=1$, the inflow boundary data $g=\cos(y)$, and the stabilization parameter $\tau=0$. The left one in Fig. \ref{f=1-C} demonstrates the contour plot of the numerical solution corresponding to the load function $f=10000$ and the right one is the contour plot of the numerical solution with load function $f=0$.

\begin{figure}[h]
\centering
\begin{tabular}{cc}
\resizebox{2.4in}{2.1in}{\includegraphics{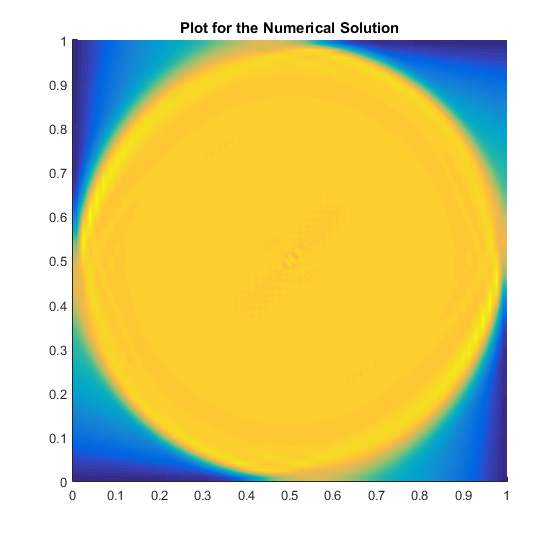}}
\resizebox{2.4in}{2.1in}{\includegraphics{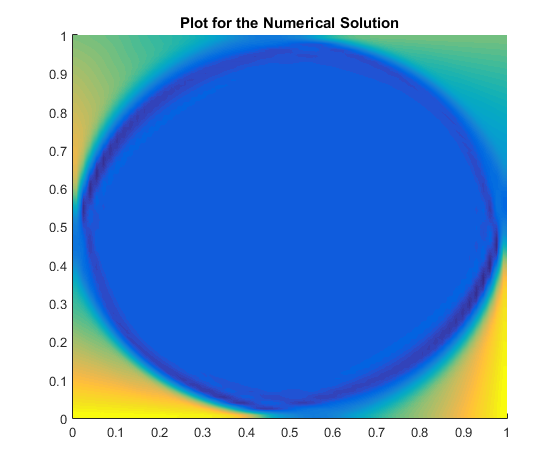}}
\end{tabular}
\caption{(with post-processing) Contour plots of the numerical solution $u_h$ on the unit square domain $\Omega_1$: $\bbeta=[y-0.5, -x+0.5]$, $c=1$, the inflow boundary data $g=\cos(y)$, $\tau=0$. The load function $f=10000$ (left) and the load function $f=0$ (right).}
\label{f=1-C}
\end{figure}

Fig. \ref{f=1-Cracked} shows the contour plots of the numerical solution $u_h$ resulting from the PD-WG scheme on the cracked square domain $\Omega_3$ when the load function is given by $f=0$ and $f=10000$. The configuration of this test problem is as follows: (1) the convection vector is $\bbeta=[y, -x]$, (2) the reaction coefficient is $c=0$, (3) the inflow boundary data is $g=\sin(x)$, and (4) the stabilization parameter is $\tau=0$.

\begin{figure}[h]
\centering
\begin{tabular}{cc}
\resizebox{2.4in}{2.1in}{\includegraphics{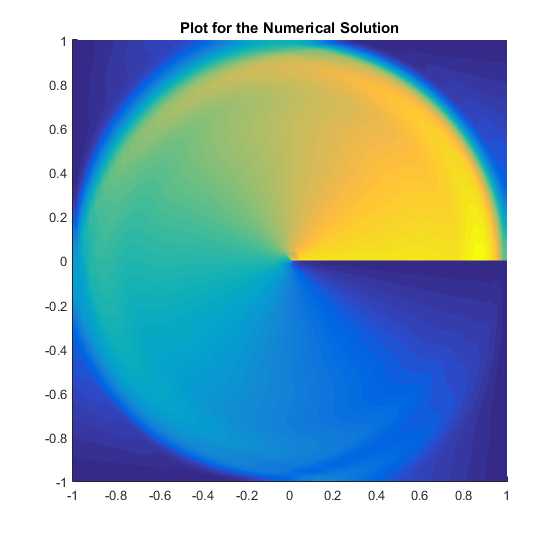}}
\resizebox{2.4in}{2.1in}{\includegraphics{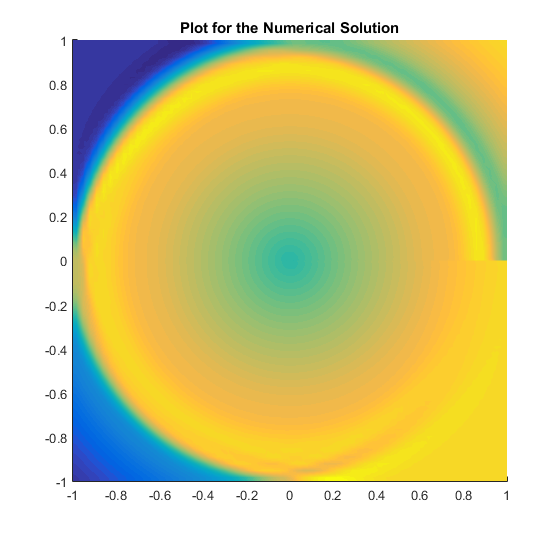}}
\end{tabular}
\caption{(with post-processing) Contour plots of the numerical solution $u_h$ on the cracked square domain $\Omega_3$: $\bbeta=[y, -x]$, $c=0$; the inflow boundary data $g=\sin(x)$, $\tau=0$. The load function $f=10000$ (left) and $f=0$ (right).}
\label{f=1-Cracked}
\end{figure}

Figure \ref{Lshaped} shows the contour plots of the numerical solution $u_h$ of the PD-WG method on the L-shaped domain $\Omega_2$. The test problem has the following configuration: (1) the load function is given by $f=0$ and $f=10000$, respectively, (2) the convection vector is $\bbeta(x, y)=[1, -1]$ for $y<1-x$ and $\bbeta(x, y)=[-1, 1]$ elsewhere, (3) the reaction coefficient is $c=1$, (4) the inflow boundary data is $g=\sin(x)\cos(y)$, and (5) the stabilization parameter is $\tau=0$. The left one in Fig. \ref{Lshaped} is the contour plot of the numerical solution corresponding to the load function $f=10000$, and the right one is the contour plot of the numerical solution for the load function $f=0$.

\begin{figure}[h]
\centering
\begin{tabular}{cc}
\resizebox{2.4in}{2.1in}{\includegraphics{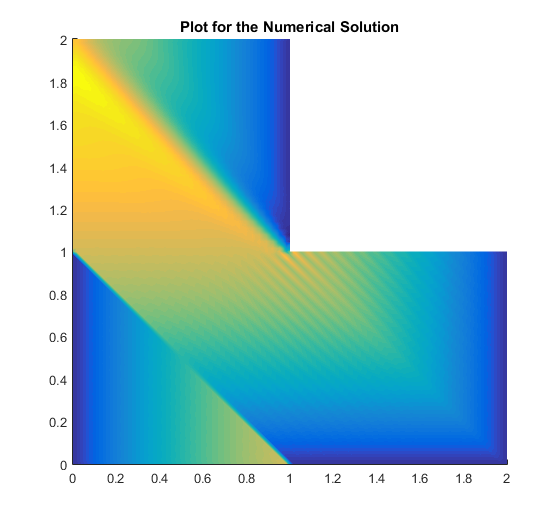}}
\resizebox{2.4in}{2.1in}{\includegraphics{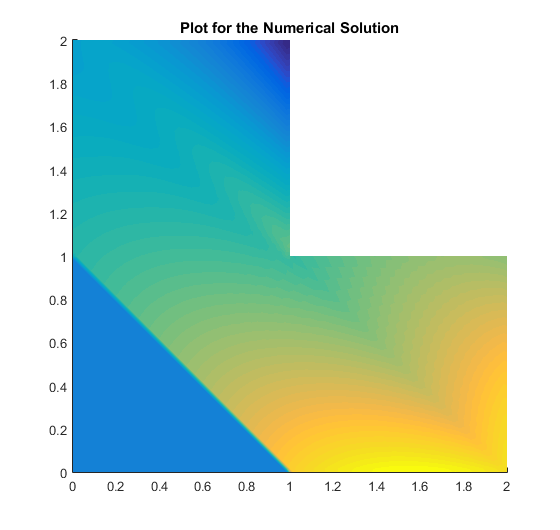}}
\end{tabular}
\caption{(with post-processing) Contour plots of the numerical solution $u_h$ on the L-shaped square domain $\Omega_2$: $\bbeta(x, y)=[1, -1]$ if $y<1-x$ and $\bbeta(x, y)=[-1, 1]$ elsewhere, $c=1$, $\tau=0$, and the inflow boundary data $g=\sin(x)\cos(y)$. The load function $f=10000$ (left) and $f=0$ (right).}
\label{Lshaped}
\end{figure}

In conclusion, the numerical performance of the PD-WG method (\ref{32})-(\ref{2}) for solving the linear transport problem \eqref{model} is consistent with or better than what the theory predicts in earlier sections of this paper. The numerical results clearly reveal a convergence of the method at the optimal order. We thus claim that the PD-WG finite element method is a stable, convergent, and practically useful numerical scheme for linear transport problems.

\vfill\eject

\end{document}